\documentclass[10pt,a4paper]{article}
\usepackage{amsfonts,amssymb,amsmath,amsthm,mathtools}
\usepackage{aliascnt}
\usepackage{mathrsfs}
\usepackage{enumitem}
\usepackage[margin=25mm]{geometry}
\usepackage[english]{babel}
\usepackage[T1]{fontenc}
\usepackage{lmodern}
\usepackage{microtype}
\usepackage{hyperref}
\usepackage[nameinlink,capitalize]{cleveref}
\usepackage{url}
\hypersetup{
  colorlinks=true,
  linkcolor=blue,
  citecolor=blue,
  urlcolor=blue,
  pdftitle={Quantitative propagation of chaos on arbitrary finite time intervals for the three-dimensional repulsive Vlasov--Poisson--Fokker--Planck equation},
  pdfauthor={Ning Jiang, Juntao Wu},
  pdfsubject={Quantitative propagation of chaos on arbitrary finite time intervals for the unregularized three-dimensional repulsive VPFP equation},
  pdfkeywords={Vlasov--Poisson--Fokker--Planck, Riesz interaction, Coulomb interaction, propagation of chaos, total variation, BBGKY hierarchy, mean-field limit},
  pdfdisplaydoctitle=true
}
\numberwithin{equation}{section}

\newcommand{\R}{\mathbb R}
\newcommand{\T}{\mathbb T}

\newcommand{\Pp}{\mathbb P}
\newcommand{\E}{\mathbb E}
\newcommand{\dd}{\,\mathrm d}
\newcommand{\ind}{\mathbf 1}
\newcommand{\TV}{\mathrm{TV}}
\newcommand{\esssupp}{\operatorname*{ess\,supp}}

\newtheoremstyle{plainnormalnote}%
  {3pt}%
  {3pt}%
  {\itshape}%
  {0pt}%
  {\bfseries}%
  {.}%
  {.5em}%
  {\thmname{#1}\thmnumber{ #2}\thmnote{ \normalfont(#3)}}%
\theoremstyle{plainnormalnote}
\newtheorem{theorem}{Theorem}[section]
\newaliascnt{corollary}{theorem}
\newtheorem{corollary}[corollary]{Corollary}
\aliascntresetthe{corollary}
\newaliascnt{proposition}{theorem}
\newtheorem{proposition}[proposition]{Proposition}
\aliascntresetthe{proposition}
\newaliascnt{lemma}{theorem}
\newtheorem{lemma}[lemma]{Lemma}
\aliascntresetthe{lemma}
\theoremstyle{definition}

\newaliascnt{assumption}{theorem}
\newtheorem{assumption}[assumption]{Assumption}
\aliascntresetthe{assumption}
\newtheoremstyle{boldremark}%
  {3pt}%
  {3pt}%
  {\normalfont}%
  {0pt}%
  {\bfseries}%
  {.}%
  {.5em}%
  {\thmname{#1}\thmnumber{ #2}\thmnote{ \normalfont(#3)}}%
\theoremstyle{boldremark}
\newtheorem{remark}{Remark}[section]
\theoremstyle{plainnormalnote}

\crefname{theorem}{Theorem}{Theorems}
\Crefname{theorem}{Theorem}{Theorems}
\crefname{proposition}{Proposition}{Propositions}
\Crefname{proposition}{Proposition}{Propositions}
\crefname{lemma}{Lemma}{Lemmas}
\Crefname{lemma}{Lemma}{Lemmas}
\crefname{corollary}{Corollary}{Corollaries}
\Crefname{corollary}{Corollary}{Corollaries}
\crefname{definition}{Definition}{Definitions}
\Crefname{definition}{Definition}{Definitions}
\crefname{assumption}{Assumption}{Assumptions}
\Crefname{assumption}{Assumption}{Assumptions}
\crefname{remark}{Remark}{Remarks}
\Crefname{remark}{Remark}{Remarks}
\crefname{equation}{equation}{equations}
\Crefname{equation}{Equation}{Equations}

\begin{document}

\title{\texorpdfstring{\Large\bfseries Quantitative propagation of chaos on arbitrary finite time intervals for the three-dimensional repulsive Vlasov--Poisson--Fokker--Planck equation}{Quantitative propagation of chaos on arbitrary finite time intervals for the three-dimensional repulsive Vlasov--Poisson--Fokker--Planck equation}}
\author{Ning Jiang$^\mathrm{a}$,\quad Juntao Wu$^\mathrm{a}$\\
{\small\itshape $^\mathrm{a}$ School of Mathematics and Statistics, Wuhan University, Wuhan, Hubei 430072, P. R. China}\\[1mm]
{\small Corresponding author: Juntao Wu, \texttt{00036371@whu.edu.cn}}\\[1mm]
}
\date{}

\maketitle

\begin{abstract}
We prove quantitative propagation of chaos in total variation for
repulsive Vlasov--Riesz--Fokker--Planck (VRFP) equations on $\T^d$ and
$\R^d$ with unregularized interaction potentials with singularity
$|x|^{-s}$, $0<s<d/2$. On every finite time interval on which the
limiting classical solution satisfies the stated regularity assumptions,
the particle marginals with a product initial law converge at an algebraic
rate in the particle number, with constants growing at most exponentially
in the marginal order. For the three-dimensional repulsive
Vlasov--Poisson--Fokker--Planck (VPFP) equation on $\R^3$, a weighted
$C^m$ initial probability density, $m\ge3$, with Gaussian velocity decay
and polynomial spatial decay of order $M>3$ gives convergence to the
global classical solution on every finite time interval. The proof
combines conditioning of the initial product law with weighted BBGKY
estimates. A decomposition of the potential and a bounded modification
of the far-field force yield weighted bounds growing geometrically in
the marginal order on the prescribed interval. A centered second-moment
estimate permits summation of the error hierarchy, and a synchronous
coupling transfers the estimate to the original dynamics.
\end{abstract}

\medskip
\noindent\textbf{Keywords:} VRFP equation; VPFP equation; propagation of chaos; Riesz interaction; Coulomb interaction; total variation; BBGKY hierarchy

\medskip
\noindent\textbf{MSC 2020:} Primary 35Q83, 35Q84; Secondary 60H10, 82C22.

\section{Introduction}\label{sec:intro}

We study quantitative propagation of chaos for stochastic particle systems
with unregularized repulsive Riesz interactions. Our general result,
\Cref{thm:fixed-time-riesz}, gives an algebraic rate of convergence in
total variation on every prescribed finite interval under regularity
assumptions on the limiting Vlasov--Riesz--Fokker--Planck (VRFP)
solution. For the three-dimensional
Coulomb interaction, the global classical solution theory yields
\Cref{cor:global-coulomb} on every finite interval for weighted smooth
initial data.

\subsection{The models}\label{subsec:intro-model}

Let $d\ge2$, $0<s<d-1$, and $\mathcal D\in\{\T^d,\R^d\}$, where
$\T^d=(\R/\mathbb Z)^d$ has unit volume.  The phase space is
$\Omega_{\mathcal D}=\mathcal D\times\R^d$.
Fix $\sigma>0$ and $c_{d,s}>0$.  On $\R^d$, set
\begin{equation}\label{eq:intro-whole-riesz-potential}
 \phi_{\R,s}(x)=1+c_{d,s}|x|^{-s},\qquad
 K_{\R,s}(x)=-\nabla\phi_{\R,s}(x)
 =c_{d,s}s\frac{x}{|x|^{s+2}}.
\end{equation}
On the torus, let $\phi_{\T,s}\ge1$ be even and smooth away from the
origin, with lower semicontinuous extension $\phi_{\T,s}(0)=+\infty$.
In a coordinate neighborhood of the origin, assume that
\begin{equation}\label{eq:intro-periodic-riesz-potential}
 \phi_{\T,s}(x)=c_{d,s}|x|^{-s}+H_{\T,s}(x),\qquad
 H_{\T,s}\in C^\infty.
\end{equation}
Set $K_{\T,s}=-\nabla\phi_{\T,s}$ away from the origin.  Then
$K_{\T,s}(x)=c_{d,s}s x/|x|^{s+2}+O(1)$ near zero.  This class includes
smooth periodic extensions of the Euclidean singularity and, after an
additive normalization, positive multiples of the periodic Riesz Green kernel; see
\cite[equation~(3.1)]{ChodronRosenzweigSerfaty2025}.  Additive constants in the potential
leave the force unchanged.

We consider the second-order stochastic particle system
\begin{equation}\label{eq:intro-riesz-particle}
\left\{
\begin{aligned} &\dd X_i=V_i\,\dd t,\\
 &\dd V_i=\frac1N\sum_{j\ne i}K_{\mathcal D,s}(X_i-X_j)\,\dd t
 +\sqrt{2\sigma}\,\dd B_i,
 \qquad i=1,\ldots,N,
  \end{aligned}
  \right.
\end{equation}
where $B_1,\ldots,B_N$ are independent standard $\R^d$-valued Brownian
motions, independent of the initial configuration.  Positions are understood
modulo the torus when $\mathcal D=\T^d$.  The corresponding mean-field equation
is the VRFP equation
\begin{equation}\label{eq:intro-riesz-vpfp}
 \left\{
 \begin{aligned}
  &\partial_t f+v\cdot\nabla_x f+(K_{\mathcal D,s}*\rho_f)\cdot\nabla_v f
  =\sigma\Delta_v f,\\
  &\rho_f(t,x)=\int_{\R^d}f(t,x,v)\,\dd v,\qquad f(0)=f_0.
 \end{aligned}
 \right.
\end{equation}

For $\mathcal D=\R^3$, $s=1$ and $c_{3,1}=c>0$, the force is
$K_{\R,1}(x)=cx/|x|^3$, and \eqref{eq:intro-riesz-vpfp} is the
three-dimensional repulsive Vlasov--Poisson--Fokker--Planck (VPFP)
equation.

For each $N$, \Cref{prop:common-finite-N} provides a pathwise unique global
strong solution of \eqref{eq:intro-riesz-particle} whose particle
positions remain distinct, for every initial law supported on configurations
with distinct positions and independent of the Brownian motions. This applies to all
initial densities below.

\paragraph{Notation.}\label{par:intro-notation}
Let $F_N(t)$ denote the law of \eqref{eq:intro-riesz-particle} and let
$F_{N,k}(t)$ be its $k$-particle marginal.  We write
$z_i=(x_i,v_i)$, $Z_k=(z_1,\ldots,z_k)$,
$X_k=(x_1,\ldots,x_k)$, $V_k=(v_1,\ldots,v_k)$ and
$\Delta_{V_k}=\sum_{i=1}^k\Delta_{v_i}$.
We also write $f_t=f(t,\cdot)$ and
$f_t^{\otimes k}(Z_k)=\prod_{i=1}^k f(t,z_i)$.  For any integrable phase-space
function $g$, write $\rho_g(x):=\int_{\R^d}g(x,v)\,\dd v$ and
$\rho_0:=\rho_{f_0}$.

We identify absolutely continuous probability measures with their
densities. For probability measures on a measurable space $S$, we use
$\|\mu-\nu\|_{\TV}=|\mu-\nu|(S)$, which is the $L^1$ distance for
densities. We write $\langle x\rangle=(1+|x|^2)^{1/2}$.

For a Markov kernel $P$, a measurable map $\Pi$, and a coupling
$(X,Y)$ of $\mu,\nu$, we have
\begin{equation}\label{eq:intro-TV-contraction}
 \|\Pi_\#P\mu-\Pi_\#P\nu\|_{\TV}\le\|\mu-\nu\|_{\TV}\le2,
 \qquad \|\mu-\nu\|_{\TV}\le2\Pp(X\ne Y).
\end{equation}
We write $q'=q/(q-1)$ and, on a fixed interval $[0,T]$,
$C_tX=C([0,T];X)$ and $L_t^pX=L^p(0,T;X)$.
We write $d_{\mathcal D}$ for the Euclidean distance on $\R^d$ and the
flat distance on $\T^d$.
The space $C_b^r$ consists of functions with bounded continuous derivatives
up to order $r$. Spatial and velocity multi-indices have nonnegative integer
entries. We denote the unweighted $L^p$ norm by $\|\cdot\|_p$.
For $1\le q<\infty$ and a positive weight $W$ on
$\Omega_{\mathcal D}^k$, we use
\[
 \|h\|_{L^q(W)}^q=\int_{\Omega_{\mathcal D}^k}|h(Z_k)|^qW(Z_k)\,\dd Z_k.
\]
We write $\dd Z_{a:b}=\prod_{j=a}^b\dd x_j\dd v_j$; an integral over
an empty set of variables leaves its integrand unchanged. Unless specified
otherwise, phase-space integrals are over $\Omega_{\mathcal D}$ in each
displayed variable. Pointwise density bounds hold almost everywhere. Constants denoted by
$C$ may change from line to line; their parameter dependence is specified
in each section.

For an integer $m\ge0$, $a>0$ and $M>d$, we set
\begin{equation*}
 C^m_{M,a}(\R^{2d})
 :=\{h\in C^m(\R^{2d}):\|h\|_{C^m_{M,a}}<\infty\},
\end{equation*}
with
\begin{equation}\label{eq:whole-space-classical-data-norm}
 \|h\|_{C^m_{M,a}}
 :=\max_{|\xi|+|\eta|\le m}\sup_{x,v}
 \langle x\rangle^M e^{a|v|^2}
 |\partial_x^\xi\partial_v^\eta h(x,v)|.
\end{equation}

On the torus, for an integer $m\ge0$ and $a>0$, we use
\begin{equation*}
 C^m_{\T,a}
 :=\{h\in C^m(\T^d\times\R^d):\|h\|_{C^m_{\T,a}}<\infty\},
 \quad
 \|h\|_{C^m_{\T,a}}
 :=\max_{|\xi|+|\eta|\le m}\sup_{x,v}
 e^{a|v|^2}|\partial_x^\xi\partial_v^\eta h(x,v)|.
\end{equation*}

A \emph{classical solution} on $[0,T]$ is a function
$f\in C([0,T];L^1)$ with $f(0)=f_0$, whose derivatives
$\partial_tf$, $\nabla_xf$, $\nabla_vf$ and $D_v^2f$ are continuous
on $(0,T]\times\Omega_{\mathcal D}$ and which satisfies
\eqref{eq:intro-riesz-vpfp} pointwise there.

\subsection{Main results}\label{subsec:intro-main-results}

We first state the assumptions on the limiting VRFP solution and the
quantitative comparison for Riesz interactions.

\begin{assumption}[Limiting solution]\label{ass:limiting-solution}
Let $d\ge2$, $0<s<d-1$, $\sigma>0$, $T>0$, and
$\mathcal D\in\{\T^d,\R^d\}$. On $[0,T]$, equation
\eqref{eq:intro-riesz-vpfp} has a nonnegative classical solution $f$
with initial datum $f_0$ and unit mass, satisfying
\begin{equation}\label{eq:intro-field-regularity}
 E_f=K_{\mathcal D,s}*\rho_f\in C([0,T];C_b^1),
\end{equation}
and
\begin{equation}\label{eq:intro-velocity-derivative}
 \int_0^T\|\nabla_v f(t)\|_1\,\dd t<\infty.
\end{equation}
For some $a_0>0$ and $C_0<\infty$, the initial datum satisfies
\begin{equation}\label{eq:intro-initial-decay}
 0\le f_0(x,v)\le
 \begin{cases}
 C_0e^{-a_0|v|^2},&\mathcal D=\T^d,\\
 C_0\langle x\rangle^{-M}e^{-a_0|v|^2},
       &\mathcal D=\R^d,\quad M>d.
 \end{cases}
\end{equation}
\end{assumption}

The comparison estimates use an exponent $q$ satisfying
\begin{equation}\label{eq:intro-q-range}
 2\le q<\infty,\qquad q>\frac d{d-s-1}.
\end{equation}

\begin{theorem}
\label{thm:fixed-time-riesz}\label{thm:quantitative-comparison}
Suppose that \Cref{ass:limiting-solution} holds, $0<s<d/2$, and
$q$ satisfies \eqref{eq:intro-q-range}.
Let $F_N$ be the law of \eqref{eq:intro-riesz-particle} with initial
distribution $f_0^{\otimes N}$. There exist
$C_T,c_T\ge1$ such that, for
$\gamma_T=q^{-1}e^{-c_TT}$ and all $N\ge1$, $1\le k\le N$, we have
\begin{equation}\label{eq:intro-fixed-time-TV}
 \sup_{0\le t\le T}\|F_{N,k}(t)-f_t^{\otimes k}\|_{\TV}
 \le C_T^k\bigl(N^{-\gamma_T}+N^{-(d/s-2)}\bigr).
\end{equation}
The constants are independent of $N,k$ and depend only on
$d,s,q,\sigma,T$, the domain, the potential, and the bounds in
\Cref{ass:limiting-solution}.
\end{theorem}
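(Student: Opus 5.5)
The plan follows the strategy announced in the abstract. We replace the particle dynamics by a modified system that is amenable to weighted BBGKY estimates, compare that system with $f_t^{\otimes k}$ through the hierarchy, and then return to the original dynamics by coupling.

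\textbf{Step 1: conditioning.} We fix a truncation radius $R_N=N^{\beta}$ in velocity, and on $\R^d$ also in position, and condition the product law $f_0^{\otimes N}$ on the event that all $N$ particles lie in the truncated region. By \eqref{eq:intro-initial-decay}, the excluded mass is at most $N e^{-a_0R_N^2}+N R_N^{-(M-d)}$, which is algebraically small. The conditioned law is again a product $\tilde f_0^{\otimes N}$, and it costs only $O(k\cdot\text{excluded mass})$ in total variation at the level of $k$-marginals, by \eqref{eq:intro-TV-contraction}. The truncated density $\tilde f_0$ is bounded by a multiple of $f_0$.

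\textbf{Step 2: splitting the kernel and modifying the far-field force.} We split $K_{\mathcal D,s}=K^{\rm near}+K^{\rm far}$ at a scale $\varepsilon_N=N^{-\theta}$. The far part $K^{\rm far}$ is replaced by a bounded modification $K^{\rm far}_N$, which agrees with it on a region that trajectories leave only with small probability. The singular near part is kept unregularized.

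\textbf{Step 3: weighted BBGKY for the modified system.} For the modified system we write the hierarchy satisfied by $F_{N,k}$ and compare with $f_t^{\otimes k}$ in $L^q(W_k)$, where $W_k=\prod_i w(z_i)$ and $w$ is a Gaussian/polynomial weight dual to \eqref{eq:intro-initial-decay}. The goal is a bound of the form
\[
\|F_{N,k}(t)\|_{L^\infty(W_k^{-1})}\le C^k\quad\text{on }[0,T].
\]
This is obtained through a maximum-principle or Gr\"onwall argument along the characteristics of the Fokker--Planck operator. It uses that $E_f\in C_b^1$ by \eqref{eq:intro-field-regularity}, that the modified far field is bounded, and that the repulsive near-field has a favourable sign in the transport of the weight.

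The error equation for $D_k=F_{N,k}-f^{\otimes k}$ contains three kinds of terms:
\begin{itemize}
\item a collision term $\frac{N-k}{N}\sum_i\int K(x_i-x_{k+1})\cdot\nabla_{v_i}D_{k+1}$, which is integrated by parts in $v_i$ onto the test function or controlled by the diffusion $\sigma\Delta_{V_k}$;
\item an $O(k^2/N)$ self-interaction term;
\item a fluctuation term involving $K*(\rho_{F_{N,k+1}}-\rho_f)$.
\end{itemize}
Because $s<d/2$, we have $K\in L^{q'}_{\rm loc}$ for $q>d/(d-s-1)$, and this is exactly what makes the H\"older pairing with the weighted $L^q$ bounds finite. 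That integrability is the source of the hypothesis \eqref{eq:intro-q-range} and of the term $N^{-(d/s-2)}$, which measures the contribution of close pairs at distance $\lesssim\varepsilon_N$. Using \eqref{eq:intro-velocity-derivative}, the diffusion absorbs one $v$-derivative, and the result is a hierarchy inequality
\[
\|D_k(t)\|\le C k\int_0^t\|D_{k+1}\|^{1/q}\,\dd\tau+C^k N^{-\alpha}.
\]

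\textbf{Step 4 (main obstacle): summing the hierarchy.} Iterating the inequality from Step 3 naively produces factorial growth in $k$, and the H\"older exponent $1/q$ compounds at every iteration. We iterate only $n\approx c_T t$ levels, on short time slabs, and then close the remaining level with the trivial bound $\|D_{k+n}\|\le2$. The centered second-moment estimate
\[
\E\Bigl|\frac1N\sum_j\bigl(\varphi(z_j)-\langle f,\varphi\rangle\bigr)\Bigr|^2\lesssim N^{-1}
\]
controls the fluctuation term at the cost of a factor of $N^{-1/2}$ at each level. Combining the iteration depth, the slab length and the exponent $1/q$ then yields the rate $N^{-q^{-1}e^{-c_TT}}$. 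Keeping the constants geometric, of order $C_T^k$, relies on the combinatorial factor $k(k+1)\cdots$ being offset by the powers $t^n/n!$ coming from the time integrals. I expect this bookkeeping to be the hardest part of the proof.

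\textbf{Step 5: synchronous coupling.} We drive the modified system and the original particle system with the same Brownian motions and the same initial data. Pathwise uniqueness is provided by \Cref{prop:common-finite-N}. The two solutions coincide until the first time a pair of particles enters the region where $K^{\rm far}_N\ne K^{\rm far}$, or a velocity exceeds $R_N$. We bound the probability of that event by the moment bounds from Step 3, giving an algebraically small quantity. By the second inequality in \eqref{eq:intro-TV-contraction}, this transfers the estimate to $F_{N,k}$. Adding the errors from Steps 1, 3 and 5 and absorbing the constants gives \eqref{eq:intro-fixed-time-TV}.
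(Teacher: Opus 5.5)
Your proof has a genuine gap at its center. Step 3 gives no working mechanism for marginal bounds of order $C^k$, uniform in $N$, on all of $[0,T]$, and Steps 4--5 depend on it.

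\textbf{The $C^k$ bound in Step 3.}
\begin{itemize}
\item A maximum principle along characteristics is unavailable at level $k<N$, because the marginal equation is not closed: it contains the flux of $F_{N,k+1}$. At level $N$ it only gives $B_T^N$, as in \eqref{eq:TV-GN-bound}.
\item A product weight $\prod_iw(z_i)$ is incompatible with the singular internal force. Applying $\frac1NK(x_i-x_j)\cdot\nabla_{v_i}$ to $e^{\beta|v_i|^2}$ produces the singular factor $\frac{2\beta}NK(x_i-x_j)\cdot v_i$, which has no sign. Repulsion helps only if the weight contains the interaction energy, $e^{\lambda e_{N,k}}$, since $L_{N,k}e_{N,k}=0$. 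But then $f_0^{\otimes k}$ has infinite weighted norm (\Cref{rem:product-exponential-divergence}).
\item This is why the paper conditions on the pair potentials being $O(N)$, at cost $N^{2-d/s}$ by \eqref{eq:common-riesz-conditioning-cost}. That, and not the local $L^{q'}$ integrability of $K$ (which needs only $(s+1)q'<d$), is where $s<d/2$ and the term $N^{-(d/s-2)}$ enter. It is also why the paper compares the modified marginals with auxiliary solutions $U_{N,k}$ that keep the internal interaction, and compares $U_{N,k}$ with $f_t^{\otimes k}$ only in $L^1$ (\Cref{lem:TV-U-product}).
\item Your direct error equation for $F_{N,k}-f_t^{\otimes k}$ in $L^q(W_k)$ contains the flux $\frac1N\sum_jK(x_i-x_j)f_t^{\otimes k}$. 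This lies in weighted $L^q$ only if $(s+1)q<d$, which is impossible for $d=3$, $s=1$, $q>3$.
\item Your velocity/position truncation keeps a product law and addresses none of this.
\end{itemize}
In the paper the $C^k$ bound is \Cref{lem:ft-global-cap}. It combines near-field energy weights with decreasing $\lambda(t)$, the weighted H\"older inequality for the BBGKY flux, and a split at an $N$-independent radius $a_T$. That radius is small enough that $\|\widetilde K^a\|_{L^{q'}}^q\le Ca^\kappa$ keeps the Riccati comparison $R'=a_BR+\ell_aR^2$ finite on $[0,T]$. The far field enters through a clipped empirical far field, whose flux is bounded by $B_0\widetilde G_{N,k}$ by positivity. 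The bound then follows by downward induction from $k=N$.

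\textbf{Step 4 does not close either.} From $\|D_k\|\le Ck\int_0^t\|D_{k+1}\|^{1/q}+C^kN^{-\alpha}$, truncating at a fixed depth $n$ with $\|D_{k+n}\|\le2$ leaves an $N$-independent error. A depth $n\sim\log N$ turns the source into $N^{-\alpha q^{-n}}$, which is no longer algebraic. Restarting on short slabs requires exactly the missing $C^k$ bounds. The paper instead keeps the hierarchy linear in the $q$-th powers $Y_{N,k}$ (\Cref{lem:ft-error-hierarchy}). It uses the cap $Y_{N,k}\le C_4^k$ to obtain $\sum_kkr^kY_{N,k}\le3\mathcal Y\log(e/\mathcal Y)$ for $\mathcal Y=\sum_kr^kY_{N,k}$, and applies Bihari's inequality to $z'=cz\log(e^2/z)$ (\Cref{lem:ft-osgood}). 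The exponent $e^{-c_TT}$ comes from $z=e^{2(1-\omega)}\varepsilon_N^\omega$, and the $1/q$ comes only from the final H\"older step.

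The centered second moment is also not an a priori $O(N^{-1})$ under the correlated law. The paper uses it inside the hierarchy, through $|\Theta_B(\mathcal E_i^a)-\mathcal E_i^a|\le C|\mathcal E_i^a-m_f^a|^2$, valid because the clipping is inactive within distance $1$ of $m_f^a$. The resulting exact expansion has a two-particle term that vanishes on $U_{N,k}f_t^{\otimes2}$ by centering and is otherwise bounded by $Y_{N,k+2}$ plus the auxiliary remainder.

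\textbf{Step 5 fails as stated.} A pair kernel bounded by $M$ must differ from $K$ at separations $\lesssim M^{-1/(s+1)}$. If $M$ is independent of $N$, some of the $\sim N^2$ pairs enter that region with probability tending to one. If instead you split at $N^{-\theta}$, the far field has size $\sim N^{\theta(s+1)}$ and enters every hierarchy constant; in the paper, $c_T$ grows like $(1+M_a)^{2q}$. The paper avoids this by clipping the empirical sum $\frac1N\sum_{j\ne i}\overline K^{a_T}(x_i-x_j)$, which concentrates near $m_f^{a_T}$ with $|m_f^{a_T}|\le B-1$. \Cref{lem:ft-spatial-observables} and a covering argument then give \Cref{lem:ft-unclipping}. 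So the synchronous coupling is exact outside an event of probability $C_T\varepsilon_N^{\gamma_T}$.
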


\begin{remark}\label{rem:main-exponents}
The constants $C_T,c_T$ depend on $T$ and the bounds in
\Cref{ass:limiting-solution} over $[0,T]$; the choice of $c_T$
is given in \eqref{eq:ft-final-parameters}.
The restriction $s<d/2$ in the product-data theorem enters through
the conditioning estimate \eqref{eq:common-riesz-conditioning-cost}.
The force condition is
\[
 q>\frac d{d-s-1}\quad\Longleftrightarrow\quad(s+1)q'<d.
\]
Thus $K_{\mathcal D,s}\in L^{q'}_{\mathrm{loc}}$ near the origin.
\end{remark}

\begin{remark}
\label{rem:intro-classical-solutions}
Under the parameter restrictions of \Cref{thm:fixed-time-riesz},
let $m\ge3$ and $a_0>0$. Nonnegative unit-mass data in
$C^m_{\T,a_0}$, or in $C^m_{M,a_0}$ with $M>d$ on $\R^d$,
satisfy \Cref{ass:limiting-solution} on a local interval by
\cite[Proposition~B.1]{JiangWuShortTime}, applied with velocity
exponent $a_0/2$ and spatial exponent $(M+d)/2$ in the whole space.
\end{remark}

For the three-dimensional Coulomb interaction, the comparison theorem
gives the following result for weighted smooth initial data.

\begin{theorem}[Three-dimensional VPFP on every finite interval]
\label{cor:global-coulomb}
Let $\mathcal D=\R^3$, $s=1$, $c_{3,1}=c>0$, and $\sigma>0$.
Let $m\ge3$ be an integer, $M>3$, and $a_0>0$.
Suppose that $f_0\in C^m_{M,a_0}(\R^6)$ is nonnegative and has unit mass.
Let $f$ be the global classical solution of
\eqref{eq:intro-riesz-vpfp}, and let $F_N$ be the particle law with
initial density $f_0^{\otimes N}$. For every $0<T<\infty$ and every
$q>3$, there exist $C_T,c_T\ge1$, independent of $N,k$, such that
\begin{equation}\label{eq:intro-global-coulomb}
 \sup_{0\le t\le T}\|F_{N,k}(t)-f_t^{\otimes k}\|_{\TV}
 \le C_T^kN^{-\gamma_T},
 \qquad \gamma_T=q^{-1}e^{-c_TT},
 \qquad N\ge1,\quad 1\le k\le N.
\end{equation}
The constants depend on $T,q,\sigma,c$ and $f_0$.
\end{theorem}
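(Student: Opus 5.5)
The plan is to derive \Cref{cor:global-coulomb} from \Cref{thm:fixed-time-riesz}, specialized to $d=3$, $s=1$, $\mathcal D=\R^3$.

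First, the parameter constraints. We have $s=1<d/2=3/2$, so $0<s<d/2$ and $0<s<d-1=2$ both hold. Condition \eqref{eq:intro-q-range} becomes $q\ge2$ and $q>d/(d-s-1)=3$, i.e.\ $q>3$. This is exactly the range in the statement.

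Second, the error terms. The second term in \eqref{eq:intro-fixed-time-TV} is $N^{-(d/s-2)}=N^{-1}$. Since $\gamma_T=q^{-1}e^{-c_TT}<1/3<1$, we have $N^{-1}\le N^{-\gamma_T}$ for $N\ge1$. The bound $C_T^k(N^{-\gamma_T}+N^{-1})$ is therefore at most $(2C_T)^k N^{-\gamma_T}$, and we rename $2C_T$ as $C_T$.

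The substantive step is to verify \Cref{ass:limiting-solution} on an arbitrary $[0,T]$ for the global classical solution $f$.

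\begin{enumerate}[label=(\roman*)]
\item \emph{Initial decay.} Since $f_0\in C^m_{M,a_0}$, we have $f_0\le\|f_0\|_{C^m_{M,a_0}}\langle x\rangle^{-M}e^{-a_0|v|^2}$ with $M>3$, which is \eqref{eq:intro-initial-decay}.

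\item \emph{Global solution and propagated bounds.} I would invoke the global classical well-posedness theory for the three-dimensional repulsive VPFP with such data. This is classical: Bouchut's global existence for VPFP, with propagation of moments and regularity, as in the companion paper \cite{JiangWuShortTime} whose Proposition~B.1 handles local time. The required output on $[0,T]$ is
\[
\sup_{t\le T}\bigl(\|\rho_t\|_{L^1\cap L^\infty}+\|\nabla_x\rho_t\|_{L^\infty}\bigr)<\infty,
\]
together with a bound $\sup_{t\le T}\|f_t\|_{C^1_{M',a'}}<\infty$ for some reduced weights $a'\le a_0/2$ and $M'>3$.

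\item \emph{Field regularity.} From the density bounds, the field estimate $\|E\|_\infty\lesssim\|\rho\|_1^{1/3}\|\rho\|_\infty^{2/3}$ and the log-Lipschitz/Hölder estimate for $\nabla E=\nabla K*\rho=K*\nabla\rho$ give $E_f\in C([0,T];C^1_b)$. To obtain $\nabla E$ bounded, I would write $\nabla E=K*\nabla_x\rho$ and use $\nabla_x\rho\in L^1\cap L^\infty$. Continuity in time follows from $f\in C([0,T];L^1)$ combined with the uniform bounds via interpolation.

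\item \emph{Velocity derivative.} The weighted $C^1$ bound yields $\|\nabla_vf_t\|_1\lesssim\sup|\nabla_v f|\,\langle x\rangle^{M'}e^{a'|v|^2}$, integrated against $\langle x\rangle^{-M'}e^{-a'|v|^2}$, which is finite because $M'>3$. This gives \eqref{eq:intro-velocity-derivative}.
\end{enumerate}

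To reach arbitrary $T$, the natural route is a continuation argument. Apply the local result \cite[Proposition~B.1]{JiangWuShortTime} repeatedly from times $t_0$. This requires that the weighted $C^m$ norm (with degraded weights) stays finite on $[0,T]$, which in turn follows from a global a priori $L^\infty$ bound on $\rho$.

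The main obstacle is propagating the Gaussian velocity weight and the spatial polynomial weight globally in time without losing them entirely. Each restart of Proposition~B.1 halves $a_0$, so restarting would degenerate the weights. I would instead first use comparison and maximum-principle arguments for the linear Fokker--Planck equation with the bounded field $E$. Once $\|E\|_{L^\infty_{t,x}}$ is known from the global theory, a supersolution of the form
\[
C e^{\lambda t}\langle x\rangle^{-M}e^{-a(t)|v|^2},\qquad a(t)\ \text{decreasing but positive on }[0,T],
\]
propagates the weights directly. The derivatives are then handled by differentiating the equation and using $\nabla E\in L^\infty$ in a Grönwall loop.

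With the assumption verified on $[0,T]$, \Cref{thm:fixed-time-riesz} gives \eqref{eq:intro-global-coulomb}. The constants depend on $T$, $q$, $\sigma$, $c$ and, through the bounds above, on $f_0$.
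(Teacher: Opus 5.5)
Your reduction is the same as the paper's, and it is correct. The parameter check ($s=1<3/2$, with \eqref{eq:intro-q-range} becoming $q>3$) matches, and so does absorbing $N^{-(d/s-2)}=N^{-1}$ into $N^{-\gamma_T}$ because $\gamma_T<1/3$. You also verify \Cref{ass:limiting-solution} from Bouchut's global theory, as the paper does.

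The difference is what you ask that theory to supply. \Cref{ass:limiting-solution} requires Gaussian decay only of $f_0$, together with \eqref{eq:intro-field-regularity} and \eqref{eq:intro-velocity-derivative}. The propagated envelope \eqref{eq:common-derived-f-envelope} is then derived inside the paper from \Cref{lem:common-exponential-envelope}. So the ``main obstacle'' you identify is self-imposed: keeping Gaussian and polynomial weights on $f_t$ and on its first derivatives, along with the restarting of the local result that you abandon. Its derivative part, a maximum principle for the system satisfied by $(\nabla_xf,\nabla_vf)$ with coefficient $\nabla E_f$, is the one step you only sketch, and it is dispensable.

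The paper instead checks by Leibniz's rule that $f_0$ and $\langle v\rangle^8f_0$ belong to $W^{2,1}\cap W^{2,\infty}$. It then applies the Huang--Liu--Pickl formulation of Bouchut's theorem, with velocity weight $8>6$ and two derivatives. This gives $\sup_{t\le T}\|\langle v\rangle^8f_t\|_{W^{2,1}\cap W^{2,\infty}}<\infty$ and $E_f\in C([0,T];W^{1,\infty})$ directly. Next it bounds $\partial_x^\xi\rho_f$ in $L^1\cap L^\infty$, and then $\partial_x^\xi E_f$ in $L^\infty$, for $|\xi|\le 2$, by splitting $K$ into its $L^1(B_1)$ and $L^\infty(B_1^c)$ parts. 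This yields $E_f(t)\in W^{2,\infty}\subset C_b^1$, and $\int_0^T\|\nabla_vf\|_1\le T\sup_t\|f_t\|_{W^{1,1}}$ gives the velocity-derivative condition.

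That route avoids both your weighted derivative estimates and your interpolation argument for time continuity of $\nabla E_f$. Yours works too, but it is longer, and it still takes the $W^{1,\infty}$ bound on $\rho_f$ from the global theory.
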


The global classical solution theory of \cite{Bouchut1993}, in the
formulation of \cite[Theorem~1.1 and Remark~1.1]{HuangLiuPickl2020},
gives the bounds in \Cref{ass:limiting-solution} on every finite
interval under the hypotheses of \Cref{cor:global-coulomb}.
The verification is given at the end of
Section~\ref{sec:fixed-time-riesz}.

%\begin{remark}\label{rem:intro-growing-k}
%By \eqref{eq:intro-fixed-time-TV}, convergence in total variation is
%uniform on $[0,T]$ for every sequence $1\le k_N\le N$ satisfying
%\[
% k_N\log C_T-\min\{\gamma_T,d/s-2\}\log N\longrightarrow-\infty.
%\]
%For $C_T>1$, this includes $k_N\le c\log N$ with
%$0<c<\min\{\gamma_T,d/s-2\}/\log C_T$.
%\end{remark}

\paragraph{Relation to previous work.}\label{subsec:intro-comparison}

For Lipschitz interaction forces, Braun and Hepp
\cite{BraunHepp1977} established the mean-field limit for classical
particle systems, and Dobrushin \cite{Dobrushin1979} proved stability
estimates in Wasserstein distance. McKean \cite{McKean1966} related
nonlinear parabolic equations to nonlinear Markov processes, and
Sznitman \cite{Sznitman1991} developed the probabilistic theory of
propagation of chaos. The surveys of Golse \cite{Golse2016} and
Jabin \cite{Jabin2014} discuss mean-field limits for Vlasov equations;
Chaintron and Diez \cite{ChaintronDiez2022I,ChaintronDiez2022II}
give a broader account of the models, methods and applications.

For deterministic second-order systems, Hauray and Jabin
\cite{HaurayJabin2007,HaurayJabin2015} treated mildly singular forces
of the form $|K(x)|\le C|x|^{-\alpha}$ with $\alpha<1$, under
corresponding assumptions on the derivatives of the force and on the
initial configurations. More singular forces have been treated with
an $N$-dependent cutoff. Boers and Pickl \cite{BoersPickl2016}
compared particle trajectories with the mean-field characteristics
for forces less singular than Coulomb in three dimensions, with a
cutoff at scale $N^{-1/3}$. Lazarovici and Pickl
\cite{LazaroviciPickl2017} derived the three-dimensional
Vlasov--Poisson equation for Coulomb and Newton interactions with a
cutoff at scale $N^{-\delta}$, $0<\delta<1/3$.
For bounded forces, Jabin and Wang \cite{JabinWang2016} used relative
entropy to prove propagation of chaos for Vlasov systems, including
systems with diffusion in velocity.

For the VPFP equation, Hauray and Salem \cite{HauraySalem2019}
proved quantitative propagation of chaos in one dimension without
regularizing the interaction. Carrillo, Choi and Salem
\cite{CarrilloChoiSalem2019} obtained propagation of chaos for
Coulomb interactions with a polynomial cutoff. In three dimensions,
Huang, Liu and Pickl \cite{HuangLiuPickl2020} obtained quantitative
estimates by comparing the stochastic particle trajectories with
independent trajectories driven by the limiting field.
Chen, Jung, Pickl and Wang \cite{ChenJungPicklWang2026} combined
such trajectory estimates with relative entropy to prove
$L^1$ convergence of the marginals for regularized Coulomb systems,
both with and without velocity diffusion.

For first-order systems, the modulated energy method gives mean-field
limits for interactions with Coulomb and Riesz singularities.
Serfaty \cite{Serfaty2020} treated Coulomb and super-Coulombic Riesz
interactions, and Nguyen, Rosenzweig and Serfaty
\cite{NguyenRosenzweigSerfaty2022} extended the analysis to Riesz-type
potentials with $0<s<d$. Rosenzweig and Serfaty
\cite{RosenzweigSerfaty2023} obtained global-in-time convergence for
diffusive Riesz-type flows in the range $0<s<d-2$.

Relative entropy provides another approach to singular first-order
systems. Jabin and Wang \cite{JabinWang2018} proved quantitative
propagation of chaos for stochastic systems with
$W^{-1,\infty}$ kernels, including the two-dimensional viscous vortex
model. Bresch, Jabin and Wang \cite{BreschJabinWang2020} combined
relative entropy and modulated potential energy in a modulated free
energy and obtained quantitative estimates for repulsive singular
interactions with diffusion. Their work
\cite{BreschJabinWang2023} treats singular attractive interactions,
including the subcritical Patlak--Keller--Segel model.
Chodron de Courcel, Rosenzweig and Serfaty
\cite{ChodronRosenzweigSerfaty2025} used modulated free energy and
decay estimates for the limiting equation to prove sharp
uniform-in-time convergence for periodic Riesz gradient flows.

Estimates at the level of individual marginals have also been developed
through hierarchies of differential inequalities. Lacker
\cite{Lacker2023} obtained relative entropy bounds of order
$(k/N)^2$ under a functional inequality for the limiting law.
Hess-Childs and Rowan \cite{HessChildsRowan2025} proved quantitative
estimates in a weighted $L^2$ distance and higher-order expansions
for diffusions with bounded interactions. Songbo Wang
\cite{Wang2026Local} treated singular $W^{-1,\infty}$ interactions
using hierarchies of relative entropies and Fisher informations, and
of $\chi^2$ divergences and Dirichlet energies.
B\'ejar-L\'opez, Blaustein, Jabin and Soler
\cite{BejarBlausteinJabinSoler2026} obtained bounds uniform in time
and in the particle number for the $L^2$ norms of marginals of
singular first-order systems, including initial laws that are not chaotic.

For singular kinetic systems, Bresch, Jabin and Soler
\cite{BJS2025} introduced exponentially weighted $L^q$ estimates
for the BBGKY hierarchy, with weights defined by the internal energy
of the particles in each marginal. Velocity dissipation controls the
velocity derivative in the interaction term, while a weighted
H\"older inequality bounds its flux by the next marginal.
Their Theorem~2 gives short-time weak convergence on the torus for
$K\in L^p$, $p>1$, including the two-dimensional repulsive Coulomb
interaction. It assumes exponential integrability of the potential,
Gaussian velocity-moment bounds for entropy solutions of the
$N$-particle Fokker--Planck equation,
and initially chaotic marginals with $L^\infty$ bounds growing
geometrically in their order. Under the additional assumptions
$K\in L^2$ and quantitative weighted initial bounds, their
Theorem~3 gives quantitative convergence in weighted $L^2$.

Bresch, Duerinckx and Jabin \cite{BreschDuerinckxJabin2024}
developed a duality method based on linearized correlations for
square-integrable forces, with or without diffusion.
For forces in $H^r_{\mathrm{loc}}$ with $r>0$, their Theorem~2
gives total variation estimates of order $N^{-C^{-1}e^{-Ct}}$
under regularity assumptions on the force away from the origin,
the limiting field and weighted derivatives of the limiting solution.
Khoury and Jabin \cite{KhouryJabin2026} obtained quantitative
estimates for marginals and correlation functions through a hierarchy
of dual cumulants. Duerinckx and Jabin
\cite{DuerinckxJabin2026} used kinetic regularization in the dual
hierarchy to derive the unregularized two-dimensional
Vlasov--Poisson equation, with or without velocity diffusion,
under regularity assumptions on the limiting solution.
For kinetic McKean--Vlasov systems with interactions in a kinetic
Kato class, Hao, Zhang and Zhao \cite{HaoZhangZhao2026}
obtained quantitative relative entropy estimates on path space
and a central limit theorem.

Feng and Wang \cite{FengWang2026} combined modulated Gibbs
measures with the partition-function estimates of Wang and Zhao
\cite{WangZhao2026} to apply the weighted BBGKY estimates to
product initial laws. Their Theorem~1.3 proves weak propagation
of chaos for repulsive Riesz potentials with $0<s<d/2$ on an
$N$-independent short-time interval, including the unregularized
three-dimensional VPFP equation. The result assumes a smooth
limiting solution and smooth bounded initial data with bounded
spatial density, together with a Gaussian velocity moment on the
torus and a Gaussian moment in $(x,v)$ in the whole space.

The regularity of the limiting equation is supplied by the classical
solution theory for VPFP. Victory and O'Dwyer
\cite{VictoryODwyer1990} studied classical solutions, Bouchut
\cite{Bouchut1993} proved global existence and uniqueness of smooth
solutions in three dimensions, and Ono and Strauss
\cite{OnoStrauss2000} obtained further results on regular solutions.
For Riesz interactions, Choi and Jeong \cite{ChoiJeong2024}
studied well-posedness and singularity formation for the
Vlasov--Riesz system. Wang and Zhang \cite{WangZhang2025}
proved local well-posedness for VRFP in modulation spaces, while
Choi, Jeong and Kang \cite{ChoiJeongKang2024} constructed global
solutions near a Maxwellian for VRFP with velocity friction.

In our previous work \cite{JiangWuShortTime}, conditioning of
the initial product law and auxiliary $k$-particle Fokker--Planck
equations were combined with weighted BBGKY estimates to obtain
quantitative propagation of chaos for unregularized repulsive
Riesz interactions. Under the hypotheses of
\Cref{thm:fixed-time-riesz}, \cite[Theorem~1.2]{JiangWuShortTime}
gives
$0<\tau\le T$ and $C\ge1$, independent of $N,k$, such that
\[
 \sup_{0\le t\le\tau}\|F_{N,k}(t)-f_t^{\otimes k}\|_{\TV}
 \le C^k\bigl(N^{-1/q}+N^{-(d/s-2)}\bigr).
\]
For the three-dimensional Coulomb interaction, this yields every
rate $N^{-b}$ with $0<b<1/3$ on a corresponding short-time interval.
The whole-space estimates use polynomial spatial weights and
Gaussian velocity decay.

The present paper extends this quantitative comparison to every
prescribed finite interval on which
\Cref{ass:limiting-solution} holds. Estimate
\eqref{eq:intro-fixed-time-TV} replaces $N^{-1/q}$ by
$N^{-\gamma_T}$, with $\gamma_T=q^{-1}e^{-c_TT}>0$, and retains
the conditioning error $N^{-(d/s-2)}$.
The conditioning and auxiliary estimates from
\cite{JiangWuShortTime} are recalled in
Sections~\ref{sec:conditioning}--\ref{sec:common-comparison}.
Section~\ref{sec:fixed-time-riesz} establishes weighted marginal
bounds on the prescribed interval and a centered second-moment
estimate for the bounded modification of the force. Summation of
the resulting error hierarchy and synchronous coupling then give
the comparison for the original dynamics. In three dimensions,
the global classical VPFP theory yields
\Cref{cor:global-coulomb} on every finite interval for the weighted
smooth initial data stated there.

\paragraph{Finite-time estimates and proof strategy.}
\label{subsec:intro-finite-time}\label{subsec:intro-proof-outline}

Fix $T>0$. The particle law at a positive time is generally correlated,
whereas \cite[Theorem~1.2]{JiangWuShortTime} starts from a product law. For the evolution from the conditioned initial law,
\eqref{eq:TV-GN-bound} yields weighted marginal bounds of order $C_T^N$.
We obtain bounds for the modified system that grow geometrically in
the marginal order, with a base independent of $N$.

From the decomposition \eqref{eq:ft-potential-split} and
\eqref{eq:ft-small-near-force}, we obtain
\[
 K=\widetilde K^a+\overline K^a,\qquad
 \|\widetilde K^a\|_{L^{q'}}^q\le Ca^\kappa,\qquad
 \kappa=q(d-s-1)-d>0.
\]
For the modified law $\widetilde G_N$ in
\eqref{eq:ft-clipped-system}, \Cref{lem:ft-global-cap} uses this
smallness to choose $a=a_T>0$ such that
\[
 \sup_{0\le t\le T}\int\widetilde G_{N,k}(t)^q
                            \widetilde W_{N,k}(t)
 \le R_T^k,\qquad 1\le k\le N.
\]
Here $a_T$ and $R_T$ are independent of $N$, and the weights are
defined in \eqref{eq:ft-weights}.

We outline the proof of \Cref{thm:fixed-time-riesz}.
Let $G_N$ be the law of the original particle system with initial
density $G_N^0$, take $a=a_T$ in the modified system, and let
$U_{N,k}$ solve \eqref{eq:TV-U-PDE}.
We use the decomposition
\begin{equation}\label{eq:intro-error-decomposition}
 \begin{aligned}
 F_{N,k}-f_t^{\otimes k}
 ={}&(F_{N,k}-G_{N,k})+(G_{N,k}-\widetilde G_{N,k})\\
 &+(\widetilde G_{N,k}-U_{N,k})+(U_{N,k}-f_t^{\otimes k}).
 \end{aligned}
\end{equation}

\begin{enumerate}[label=\textup{(\roman*)}]
\item \emph{Initial conditioning.}
By total variation contraction and
\eqref{eq:common-riesz-conditioning-cost}, we have
\[
 \sup_{t\le T}\|F_{N,k}(t)-G_{N,k}(t)\|_{\TV}
 \le\|f_0^{\otimes N}-G_N^0\|_{\TV}
 \le C_TN^{2-d/s}.
\]

\item \emph{Comparison with the auxiliary equations.}
Combining \eqref{eq:common-riesz-conditioning-marginals} with
\Cref{lem:TV-U-product} yields
\[
 \sup_{t\le T}\|U_{N,k}(t)-f_t^{\otimes k}\|_1
 \le C_RkN^{1-d/s}+\frac{C_Tk^{2-1/q}}N
 \le\frac{C_T^k}{N},
\]
since $d/s>2$.

\item \emph{The error hierarchy.}
The weighted errors defined in \eqref{eq:ft-error-definition} satisfy
\[
 Y_{N,k}(t)=\int|\widetilde G_{N,k}(t)-U_{N,k}(t)|^q
                       \widetilde W_{N,k}(t),
 \qquad Y_{N,k}(0)=0.
\]
Equations \eqref{eq:ft-near-cap} and \eqref{eq:ft-U-cap} imply
$Y_{N,k}\le C_4^k$, with $C_4$ defined in \eqref{eq:ft-error-cap}.
Combining the auxiliary remainder estimates and the
centered second-moment identity \eqref{eq:ft-exact-variance}, we obtain
\[
 \begin{aligned}
 Y_{N,k}(t)-Y_{N,k}(s)
 \le C_T\int_s^t\Bigl[&k\bigl((1+M_{a_T})^qY_{N,k}
             +Y_{N,k+1}+(1+M_{a_T})^{2q}Y_{N,k+2}\bigr)\\
             &+(1+M_{a_T})^{2q}D_T^k\frac2N\Bigr](u)\dd u,
 \end{aligned}
\]
where $M_{a_T}=\|\overline K^{a_T}\|_\infty$ and
$Y_{N,N+1}=Y_{N,N+2}=0$; see \Cref{lem:ft-error-hierarchy}.
Geometric summation and Bihari's inequality in \Cref{lem:ft-osgood},
followed by the weighted H\"older inequality, yield
\begin{align*}
 \sup_{t\le T}Y_{N,k}(t)&\le C_T^kN^{-q\gamma_T},\\
 \sup_{t\le T}\|\widetilde G_{N,k}(t)-U_{N,k}(t)\|_1
 &\le C_T^kN^{-\gamma_T},
 \qquad q\gamma_T=e^{-c_TT},
\end{align*}
with $c_T$ defined in \eqref{eq:ft-final-parameters}.

\item \emph{Synchronous coupling.}
Couple the original and modified trajectories $Z_N,\widetilde Z_N$
with the same initial configuration and Brownian motions.
\Cref{lem:ft-spatial-observables,lem:ft-unclipping} imply
\begin{align*}
 \sup_{t\le T}\|G_{N,k}(t)-\widetilde G_{N,k}(t)\|_{\TV}
 \le2\Pp\bigl(Z_N|_{[0,T]}\ne\widetilde Z_N|_{[0,T]}\bigr)\le C_TN^{-\gamma_T}.
\end{align*}
\end{enumerate}
Inserting these estimates into \eqref{eq:intro-error-decomposition}
proves \eqref{eq:intro-fixed-time-TV}. The global classical solution
theory then yields \Cref{cor:global-coulomb} for the three-dimensional
Coulomb interaction.

Sections~\ref{sec:conditioning}--\ref{sec:common-comparison}
state the conditioning and auxiliary estimates used in the proof.
Section~\ref{sec:fixed-time-riesz} proves the two main theorems.
Section~\ref{sec:kernel-verification} and
Appendix~\ref{app:fixed-N-passage} construct the particle dynamics
and justify the approximation arguments.

\section{Conditioning of the initial product measure}\label{sec:conditioning}
For $1\le k\le N$, define the $k$-particle energy
\begin{equation}\label{eq:comparison-energy}
	e_{N,k}(Z_k)=\sum_{i=1}^k(1+|v_i|^2)
	+\frac1N\sum_{i\ne j\le k}\phi_{\mathcal D,s}(x_i-x_j).
\end{equation}

We recall the conditioning result of
\cite[Corollary~2.3]{JiangWuShortTime}, with the potential and total
variation convention of Section~\ref{subsec:intro-model}.
We denote the event $\mathcal G_N^\sharp(R)$ constructed there by
$\mathcal G_N(R)$. For all sufficiently large $N$, this event bounds
the sum of the pair potentials involving each particle by $RN$ and
each pair potential by a fixed multiple of $N$. For the remaining
particle numbers, only the first condition is imposed; see
\cite[Proposition~2.2]{JiangWuShortTime}.

\begin{corollary}[{\cite[Corollary~2.3]{JiangWuShortTime}}]
\label{prop:common-riesz-conditioning}
Let $d\ge2$, $0<s<d/2$, and let $f_0$ be a probability density on
$\Omega_{\mathcal D}$ with $\rho_0\in L^\infty(\mathcal D)$.
There exists $R_1>0$ such that, for every $R\ge R_1$ and every $N\ge1$,
there is a symmetric closed event $\mathcal G_N(R)$ in the
position variables. Define
\[
 G_N^0=p_N^{-1}\ind_{\mathcal G_N(R)}f_0^{\otimes N},
 \qquad p_N=f_0^{\otimes N}(\mathcal G_N(R)),\qquad \eta_N=1-p_N.
\]
For $1\le k\le N$, we have
\begin{gather}
 p_N\ge\tfrac12,\qquad \eta_N\le C_RN^{2-d/s},\qquad
 \|G_N^0-f_0^{\otimes N}\|_{\TV}=2\eta_N,
 \label{eq:common-riesz-conditioning-cost}\\
 G_{N,k}^0\le2f_0^{\otimes k},\qquad
 \|G_{N,k}^0-f_0^{\otimes k}\|_1\le C_RkN^{1-d/s},
 \label{eq:common-riesz-conditioning-marginals}\\
 \frac1N\sum_{i\ne j\le k}\phi_{\mathcal D,s}(x_i-x_j)\le Rk
 \quad\text{on }\esssupp G_{N,k}^0.\notag
\end{gather}
The constants are independent of $N,k$.
\end{corollary}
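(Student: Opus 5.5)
The plan is to construct the event $\mathcal G_N(R)$ explicitly and estimate its complement by moments of the pair potential under the product law $f_0^{\otimes N}$. The key integrability fact is that $\rho_0\in L^\infty$ and $\phi_{\mathcal D,s}\lesssim 1+|x|^{-s}$ near the origin. Together these give, for independent $x_1,x_2\sim\rho_0$,
\[
\E\,\phi(x_1-x_2)^p\le C\bigl(1+\|\rho_0\|_\infty\bigr)<\infty \qquad\text{for } ps<d .
\]
Since $s<d/2$, this holds with $p=2$, and in fact for some $p$ slightly above $2$. On $\R^d$, $\phi_{\R,s}$ is bounded away from the origin. On $\T^d$, $\phi_{\T,s}$ is smooth away from zero and hence also bounded there.

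I would take $\mathcal G_N(R)=\bigcap_{i\le N}A_i\cap\bigcap_{i\ne j}B_{ij}$. Here $A_i=\{\sum_{j\ne i}\phi(x_i-x_j)\le RN\}$, and $B_{ij}=\{\phi(x_i-x_j)\le \lambda N\}$ for a fixed $\lambda$ when $N$ is large; for small $N$ only the $A_i$ are imposed. The event is symmetric and closed because $\phi$ is lower semicontinuous, with $\phi(0)=+\infty$.

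For the pair events, Chebyshev with exponent $p<d/s$ gives $\Pp(B_{ij}^c)\le C N^{-p}$. After a union bound over $N^2$ pairs this is $CN^{2-p}$. Letting $p\uparrow d/s$ costs only an $\epsilon$. To get exactly $N^{2-d/s}$ I would use the tail bound directly, $\Pp(\phi(x_1-x_2)>\lambda N)\le \|\rho_0\|_\infty|\{|y|^{-s}\gtrsim N\}|\lesssim N^{-d/s}$, which avoids moments entirely.

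For $A_i$, I condition on $x_i$. The variables $\phi(x_i-x_j)$, $j\ne i$, are then i.i.d. with conditional mean $\bar\phi(x_i)=\int\phi(x_i-y)\rho_0(y)\,dy\le C_*$ uniformly, using $\rho_0\in L^\infty$ and $s<d$. Choosing $R_1>2C_*$, the event $A_i^c$ requires the centered sum to exceed $RN/2$. I would split each summand at level $\lambda N$. Truncated summands exceeding $\lambda N$ are already excluded by the $B_{ij}$. For the truncated centered sum I would use a high-moment (Rosenthal) or Bernstein bound with increments bounded by $\lambda N$ and variance $\le CN$ (finite second moment since $2s<d$). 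This makes $\Pp(A_i^c\cap\bigcap_j B_{ij})$ smaller than any power of $N$ after the $N$-fold union bound, provided $R/\lambda$ is large. Hence $\eta_N\le C_RN^{2-d/s}$, and $p_N\ge1/2$ follows for large $N$. For the finitely many small $N$ I would enlarge $R$: only the $A_i$ are imposed, and $\Pp(A_i^c)\to0$ as $R\to\infty$ by Markov.

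The remaining items are routine. First, $\|G_N^0-f_0^{\otimes N}\|_{\TV}=(p_N^{-1}-1)p_N+\eta_N=2\eta_N$. Second, $G^0_{N,k}\le p_N^{-1}f_0^{\otimes k}\le 2f_0^{\otimes k}$. For the marginal $L^1$ bound, which has the improved factor $kN^{1-d/s}$, I write $G^0_{N,k}-f_0^{\otimes k}$ via the conditional probability of $\mathcal G_N^c$ given $Z_k$. The only events correlated with $Z_k$ beyond the average involve pairs with at least one index $\le k$, and there are at most $kN$ of them, each contributing $N^{-d/s}$. The $A_i$ contributions are superpolynomially small. So the $L^1$ distance is bounded by $2\eta_N$-type terms restricted to those $kN$ pairs, giving $C_RkN^{1-d/s}$. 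Finally, on the support each $A_i$ holds, so $\frac1N\sum_{i\ne j\le k}\phi\le\frac1N\sum_{i\le k}RN=Rk$.

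I expect the main obstacle to be the concentration step for $A_i$. It needs a sharp enough tail so that its union bound does not spoil the $N^{2-d/s}$ rate. It also needs the marginal refinement to separate the $k$-dependent pairs cleanly.
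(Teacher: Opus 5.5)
Your event is the one the paper recalls from \cite{JiangWuShortTime}: each particle's pair-potential sum is at most $RN$, each pair potential is at most $\lambda N$ for large $N$, and only the first condition is imposed for small $N$. Your pair-tail bound, the identity $\|G_N^0-f_0^{\otimes N}\|_{\TV}=2\eta_N$, the bound $G_{N,k}^0\le2f_0^{\otimes k}$ and the support bound are all correct.

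The gap is in the concentration step for $A_i$, which you flagged yourself. Neither tool you name gives what you claim. Take $t=RN/2$, $v\le CN$ (this uses $2s<d$) and increments bounded by $b=\lambda N$. Bernstein's exponent is then $t^2/(2(v+bt/3))\to3R/(4\lambda)$. So you only get $e^{-3R/(4\lambda)}$ per particle, which does not decay in $N$ and does not survive the $N$-fold union bound. Markov's inequality at any order $p$, combined with Rosenthal's inequality, is limited by the term $\sum_j\E Y_j^p\approx N(\lambda N)^{p-d/s}$. This term comes from single summands near the cap (for $p>d/s$; smaller $p$ is worse). Hence it yields at best $O(N^{1-d/s})$ per particle. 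That suffices for $\eta_N\le C_RN^{2-d/s}$, but it is not ``smaller than any power of $N$''.

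This breaks the marginal estimate \eqref{eq:common-riesz-conditioning-marginals}. Written rigorously, your argument is
\[
 \|G_{N,k}^0-f_0^{\otimes k}\|_1
 =p_N^{-1}\E\bigl|\Pp(\mathcal G_N^c\mid Z_k)-\eta_N\bigr|
 \le2p_N^{-1}\Pp(\mathcal G_N^c\triangle\mathcal E),
 \qquad \mathcal E=\bigcup_{k<i<j}B_{ij}^c,
\]
with $\mathcal E$ independent of $Z_k$. The set $\mathcal G_N^c\triangle\mathcal E$ is covered by two families. The first is the at most $kN$ events $B_{ij}^c$ with $\min\{i,j\}\le k$. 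The second is $A_i^c\cap\bigcap_jB_{ij}$ for all $N$ indices $i$, since the $A_i$ with $i>k$ also depend on $Z_k$. So you need $\Pp(A_i^c\cap\bigcap_jB_{ij})\le CN^{-d/s}$ per particle, and no moment bound gives this.

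Bennett's inequality does, because it keeps the logarithm that Bernstein discards when $bt\gg v$. Set $h(u)=(1+u)\log(1+u)-u$ and $S_i=\sum_{j\ne i}\min\{\phi(x_i-x_j),\lambda N\}$, conditionally on $x_i$. Since $u=bt/v\ge\lambda RN/(2C)$,
\[
 \Pp\Bigl(S_i-\E S_i\ge\frac{RN}{2}\Bigr)
 \le\exp\Bigl(-\frac{v}{b^2}\,h\Bigl(\frac{bt}{v}\Bigr)\Bigr)
 \le\Bigl(\frac{e}{1+\lambda RN/(2C)}\Bigr)^{R/(2\lambda)}.
\]
Take $R_1\ge\max\{2C_*,2\lambda d/s\}$. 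Then each particle costs $O(N^{-d/s})$, which gives both $\eta_N\le C_RN^{2-d/s}$ and $\|G_{N,k}^0-f_0^{\otimes k}\|_1\le C_RkN^{1-d/s}$. This bound improves as $R$ grows. Hence the threshold between large and small $N$ can be fixed independently of $R\ge R_1$, as your small-$N$ Markov argument requires.
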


The initial bound \eqref{eq:intro-initial-decay} implies
\[
 \|\rho_0\|_\infty
 \le C_0\int_{\R^d}e^{-a_0|v|^2}\,\dd v
 =C_0(\pi/a_0)^{d/2}.
\]
By \eqref{eq:intro-TV-contraction} and
\eqref{eq:common-riesz-conditioning-cost}, the particle path laws
started from $G_N^0$ and $f_0^{\otimes N}$ differ in total variation
on $C([0,T];\Omega_{\mathcal D}^N)$ by at most $2C_RN^{2-d/s}$.

By \cite[equation~(2.19)]{JiangWuShortTime}, for every
$0<\beta<a_0$, the conditioned marginals satisfy
\begin{equation}\label{eq:common-conditioned-initial-exponential}
 G_{N,k}^0\le A_\beta^ke^{-\beta e_{N,k}}
                 \prod_{i=1}^k\langle x_i\rangle^{-M},
 \qquad A_\beta:=\max\{1,2C_0e^{\beta(1+R)}\}.
\end{equation}
The spatial factors are omitted on $\T^d$.
Moreover, from \eqref{eq:common-riesz-conditioning-marginals} and $d/s>2$,
we obtain
\begin{equation}\label{eq:conditioned-initial-error}
 \|G_{N,k}^0-f_0^{\otimes k}\|_1
 \le C_RkN^{1-d/s}
 \le \frac{\max\{2,2C_R\}^k}{N}.
\end{equation}
Throughout the proof, fix $R=R_1$, choose $0<\beta<a_0$, and set
$A=\max\{A_\beta,2,2C_R\}$. We use this conditioned density $G_N^0$
in the comparison with the limiting equation.

\begin{remark}[Exponential interaction weights]
\label{rem:product-exponential-divergence}
By \cite[Remark~2.1]{JiangWuShortTime}, for continuous nonzero $f_0$,
$N\ge k\ge2$, $q\ge1$ and $\lambda>0$, we have
\[
 \int(f_0^{\otimes k})^q e^{\lambda e_{N,k}}\,\dd Z_k=+\infty.
\]
\end{remark}

\section{Auxiliary equations and weighted estimates}\label{sec:common-comparison}

Assume the hypotheses of \Cref{thm:fixed-time-riesz}, and let
$G_N^0$ be the conditioned density from Section~\ref{sec:conditioning}.
In particular, $0<s<d/2$, $q$ satisfies \eqref{eq:intro-q-range},
and \Cref{ass:limiting-solution} holds on $[0,T]$.
Write $\phi,K$ for the potential and force, and let $e_{N,k}$ be
given by \eqref{eq:comparison-energy}. Set
\begin{equation}\label{eq:comparison-error-scale}
 \varepsilon_N=2/N,\qquad
 \beta_2=\min\{\beta,a_0/2\}.
\end{equation}
Equation \eqref{eq:common-conditioned-initial-exponential} implies
\begin{equation}\label{eq:comparison-initial-envelope}
 G_{N,k}^0\le A^ke^{-\beta_2e_{N,k}}
                  \prod_{i=1}^k\langle x_i\rangle^{-M}.
\end{equation}
The spatial factors are omitted on $\T^d$, and the initial error is
bounded by $A^k\varepsilon_N$ by \eqref{eq:conditioned-initial-error}.
At $k=N$, \eqref{eq:comparison-initial-envelope} ensures finite expected
energy. Since $G_N^0$ is a density, it assigns zero mass to the collision
diagonals. Thus \Cref{prop:common-finite-N} defines
$G_N(t)=P_t^NG_N^0$, with marginals $G_{N,k}(t)$.

Unless otherwise specified, constants depend only on
$d,s,q,\sigma,T$, the domain, the potential, the bounds in
\Cref{ass:limiting-solution}, $A,\beta_2$, and the spatial exponents
$M,M_2,\alpha$ on $\R^d$. They are independent of $N,k$ and the
approximation parameters. We use the regularizations
$\phi_\varepsilon,K_\varepsilon$ of
\Cref{lem:common-riesz-regularization} for
$0<\varepsilon<\varepsilon_0$.

Let $e_{N,k}^{\varepsilon}$ be the energy
\eqref{eq:comparison-energy} with $\phi$ replaced by $\phi_\varepsilon$.
For $1\le k\le N$, we set
\begin{align}
 &L_{N,k}
 :=\sum_{i=1}^k v_i\cdot\nabla_{x_i}
 +\frac1N\sum_{i\ne j\le k}K(x_i-x_j)\cdot\nabla_{v_i},
 \label{eq:common-internal-generator}\\
 &J_{i,k}[h](Z_k)
 :=\int_{\mathcal D\times\R^d}
 K(x_i-y)h(Z_k,y,w)\,\dd y\dd w.
 \label{eq:common-flux-definition}
\end{align}
Write $L_{N,k}^{\varepsilon}$ and $J_{i,k}^{\varepsilon}$ for the
operators in \eqref{eq:common-internal-generator} and
\eqref{eq:common-flux-definition} with $K$ replaced by $K_\varepsilon$.

Let $G_N^\varepsilon(t)$ be the regularized particle law started from
$G_N^0$. Its marginals $G_{N,k}^\varepsilon$ satisfy the BBGKY hierarchy
\begin{equation*}
 \partial_tG_{N,k}^\varepsilon+L_{N,k}^\varepsilon G_{N,k}^\varepsilon
 -\sigma\Delta_{V_k}G_{N,k}^\varepsilon
 +\frac{N-k}{N}\sum_{i=1}^k\nabla_{v_i}\cdot
        J_{i,k}^\varepsilon[G_{N,k+1}^\varepsilon]=0,
 \qquad k<N,
\end{equation*}
with the last term absent at $k=N$. Appendix~\ref{app:fixed-N-passage}
justifies the singular hierarchy and the weighted estimates by
approximation at fixed $N$.

The energy satisfies
\begin{equation}\label{eq:common-energy-identities}
 L_{N,k}e_{N,k}=0,
 \qquad e_{N,k+1}\ge e_{N,k}+1+|v_{k+1}|^2.
\end{equation}
Indeed, we have
\begin{equation*}
 \begin{gathered}
 \nabla_{x_i}e_{N,k}=-\frac2N\sum_{j\ne i}K(x_i-x_j),\qquad
 \nabla_{v_i}e_{N,k}=2v_i,\\
 e_{N,k+1}-e_{N,k}=1+|v_{k+1}|^2+
             \frac2N\sum_{i=1}^k\phi(x_i-x_{k+1}).
 \end{gathered}
\end{equation*}
These identities also hold after regularization.

\subsection{Pointwise bounds and weights}\label{subsec:comparison-weights}

The following extension of \cite[Lemma~3.1]{JiangWuShortTime}
allows each field to depend on all particle positions.

\begin{lemma}\label{lem:common-exponential-envelope}
Let $1\le k\le N$, and let $b_i$ be velocity-independent fields
satisfying
\[
 b_i\in C([0,T];C_b^1(\mathcal D^k;\R^d)),
 \qquad \max_i\sup_{t\le T}\|b_i(t)\|_\infty\le L.
\]
Let $u_k^\varepsilon\in C([0,T];L^1)$ be the forward solution of
\[
 \partial_tu_k^\varepsilon+L_{N,k}^\varepsilon u_k^\varepsilon
 +\sum_i b_i(t,X_k)\cdot\nabla_{v_i}u_k^\varepsilon
 =\sigma\Delta_{V_k}u_k^\varepsilon,
\]
with a nonnegative integrable initial datum bounded, for some
$A_1,\beta_3>0$, by
\[
 u_k^\varepsilon(0,Z_k)\le A_1^k
 e^{-\beta_3e_{N,k}^\varepsilon}\prod_{i=1}^kw_m(x_i),
 \qquad
 w_m(x)=
 \begin{cases}
 \langle x\rangle^{-m},&\mathcal D=\R^d,\quad m>d,\\
 1,&\mathcal D=\T^d.
 \end{cases}
\]
There exist $\beta_T>0$ and $A_T\ge1$, depending only on
$d,\sigma,T,L,A_1,\beta_3$ and, on $\R^d$, $m$, such that
\[
 u_k^\varepsilon(t,Z_k)\le A_T^k
 e^{-\beta_Te_{N,k}^\varepsilon}\prod_{i=1}^kw_m(x_i),
 \qquad t\le T.
\]
These constants are independent of $N,k$ and $\varepsilon$.
The same estimate holds for any smooth bounded nonnegative even internal
potential with bounded first and second derivatives, using its
corresponding energy.
\end{lemma}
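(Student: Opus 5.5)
The plan is a comparison (maximum) principle with an explicit time-dependent supersolution. Set
\[
 \Psi(t,Z_k)=A_1^k e^{\lambda k t}\exp\bigl(-\beta(t)e^{\varepsilon}_{N,k}(Z_k)\bigr)\prod_{i=1}^k w_m(x_i),
\]
with $\beta(t)=\beta_3e^{-\mu t}$ decreasing. The constants $\lambda,\mu$ will be chosen below, depending only on $d,\sigma,L,\beta_3,m$. The goal is to show
\[
 \mathcal L\Psi:=\partial_t\Psi+L_{N,k}^\varepsilon\Psi+\sum_ib_i\cdot\nabla_{v_i}\Psi-\sigma\Delta_{V_k}\Psi\ge0 .
\]
Since $u_k^\varepsilon(0)\le\Psi(0)$ and the equation is linear with a first-order drift plus velocity diffusion, the weak maximum principle will then give $u_k^\varepsilon\le\Psi$ on $[0,T]$. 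The conclusion follows with $\beta_T=\beta_3e^{-\mu T}$ and $A_T=A_1e^{\lambda T}$.

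The computation uses \eqref{eq:common-energy-identities} for the regularized energy. Because $L_{N,k}^\varepsilon e_{N,k}^\varepsilon=0$, the transport part acts only on the spatial weights. On $\R^d$ this gives $v_i\cdot\nabla_{x_i}\log w_m=-m\,v_i\cdot x_i/\langle x_i\rangle^2$, bounded by $m|v_i|$. The field term contributes
\[
 -\beta(t)\,b_i\cdot 2v_i\,\Psi,\qquad |\,\cdot\,|\le 2L\beta|v_i|\,\Psi .
\]
The diffusion term gives
\[
 -\sigma\Delta_{v_i}\Psi=\bigl(2d\sigma\beta-4\sigma\beta^2|v_i|^2\bigr)\Psi .
\]
The time derivative gives $\bigl(\lambda k-\beta'(t)e_{N,k}^\varepsilon\bigr)\Psi$, where $-\beta'=\mu\beta>0$ and $e^\varepsilon_{N,k}\ge\sum_i(1+|v_i|^2)$ since $\phi_\varepsilon\ge0$. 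Hence $\mathcal L\Psi\ge\Psi\sum_i Q(|v_i|)$, with
\[
 Q(r)=\lambda+\mu\beta(1+r^2)-4\sigma\beta^2r^2-2d\sigma\beta-(m+2L\beta)r .
\]

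To make $Q\ge0$, first choose $\mu$ large with $\mu\ge8\sigma\beta_3$. Then $\mu\beta\ge 8\sigma\beta^2$, so the negative quadratic term is absorbed and at least $\tfrac{\mu\beta}{2}r^2$ survives. The linear term is handled by Young's inequality:
\[
 (m+2L\beta)r\le\tfrac{\mu\beta}{4}r^2+\frac{(m+2L\beta)^2}{\mu\beta}.
\]
Since $\beta\ge\beta_3e^{-\mu T}$, the last term is bounded by a constant depending on $T$. Finally choose $\lambda$ larger than that constant plus $2d\sigma\beta_3$. Every constant is per particle and uses no bound on $\phi_\varepsilon$ beyond nonnegativity, so the result is uniform in $N,k,\varepsilon$. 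It also applies verbatim to any smooth bounded nonnegative even internal potential, for which $L_{N,k}e_{N,k}=0$ holds in the same way. On $\T^d$ the weight is $1$ and the $m$ term is absent.

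The main obstacle is making the comparison principle rigorous, because $\Psi$ is not integrable in the usual sense and the domain is unbounded. I would take the difference $(u_k^\varepsilon-\Psi)_+$. For smooth bounded $K_\varepsilon$ and $b_i\in C_b^1$, I would either test the equation against a cutoff in $(x,v)$, or use the Feynman–Kac/SDE representation. In the SDE approach, $u$ is the law density, and the adjoint computation above, with $\operatorname{div}$ of the drift vanishing (the drift in $v$ is independent of $v$ and the transport is divergence-free), shows that $\Psi$ is a supersolution of the Fokker–Planck equation itself. Uniqueness of $L^1$ solutions with Gaussian-tail supersolutions then closes the argument.
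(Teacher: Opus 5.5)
Your proposal is correct and is essentially the paper's argument. The paper uses the same supersolution $A(t)^k e^{-\beta(t)e_{N,k}^\varepsilon}\prod_i w_m(x_i)$, with the profile $-\beta'=(4\sigma+2)\beta^2$ and rate $A'/A=(m/2+2\beta L)^2/(8\beta^2)$ in place of your $\beta_3e^{-\mu t}$ and constant rate $\lambda$; the diffusion actually contributes $+2d\sigma\beta$ rather than your $-2d\sigma\beta$, which only helps. The step you flag as the main obstacle is easier than you expect, because $\Psi$ is integrable: it has a Gaussian factor in $V_k$, satisfies $e^{-\beta e_{N,k}^\varepsilon}\le e^{-\beta(k+|V_k|^2)}$, and carries $\langle x\rangle^{-m}$ with $m>d$. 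Hence $\Psi\in C([0,T];L^1)$ and $\mathcal L\Psi,\nabla_{V_k}\Psi\in L^1(0,T;L^1)$, and at fixed $\varepsilon$ the force is bounded and smooth, so the paper simply applies the $L^1$ order comparison \eqref{eq:common-L1-order} of \Cref{lem:common-L1-stability} with $r=\mathcal L\Psi\ge0$.
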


\begin{proof}
Put $m=0$ in the torus case and set
\[
 \overline u_k^\varepsilon=A(t)^k e^{-\beta(t)e_{N,k}^\varepsilon}
 \prod_iw_m(x_i),\qquad
 -\beta'=(4\sigma+2)\beta^2,\qquad
 \frac{A'}A=\frac{(m/2+2\beta L)^2}{8\beta^2},
\]
with $\beta(0)=\beta_3$ and $A(0)=A_1$.
For $\mathcal P_k^\varepsilon
=\partial_t+L_{N,k}^\varepsilon+\sum_i b_i\cdot\nabla_{v_i}
-\sigma\Delta_{V_k}$, equation \eqref{eq:common-energy-identities} implies
\begin{align*}
 \frac{\mathcal P_k^\varepsilon\overline u_k^\varepsilon}
 {\overline u_k^\varepsilon}
 k\frac{A'}A-\beta'e_{N,k}^\varepsilon
 -m\sum_i v_i\cdot\frac{x_i}{1+|x_i|^2}
 -2\beta\sum_i b_i(t,X_k)\cdot v_i+2d\sigma\beta k-4\sigma\beta^2\sum_i|v_i|^2,
\end{align*}
where the term containing $m$ is absent on $\T^d$.
Since $e_{N,k}^\varepsilon\ge k+|V_k|^2$ and
$|x|/(1+|x|^2)\le1/2$, Young's inequality yields
\begin{align*}
 \frac{\mathcal P_k^\varepsilon\overline u_k^\varepsilon}
 {\overline u_k^\varepsilon}
 &\ge k\left(\frac{A'}A+(4\sigma+2)\beta^2+2d\sigma\beta\right)
    +\sum_i\left(2\beta^2|v_i|^2-(m/2+2\beta L)|v_i|\right)\\
 &\ge k\left(\frac{A'}A+(4\sigma+2)\beta^2+2d\sigma\beta
        -\frac{(m/2+2\beta L)^2}{8\beta^2}\right)\ge0.
\end{align*}
For fixed $N,k,\varepsilon$, the regularized force is bounded, and the
Gaussian and spatial factors imply
$\overline u_k^\varepsilon\in C([0,T];L^1)$ and
$\mathcal P_k^\varepsilon\overline u_k^\varepsilon,
\nabla_{V_k}\overline u_k^\varepsilon\in L^1(0,T;L^1)$.
The comparison principle \eqref{eq:common-L1-order} therefore implies
$u_k^\varepsilon\le\overline u_k^\varepsilon$.
Solving the equation for $\beta$, we obtain
\[
 \beta(t)=\frac{\beta_3}
 {1+(4\sigma+2)\beta_3t}>0,
 \qquad
 \beta_T=\beta(T),\qquad A_T=\max\{1,A(T)\}.
\]
This completes the proof.
\end{proof}

At $k=1$, the energy is $1+|v|^2$. By
\Cref{lem:common-L1-stability} and
\eqref{eq:intro-velocity-derivative}, $f$ is the forward solution from
$f_0$ with prescribed field $E_f$. Applying
\Cref{lem:common-exponential-envelope} with $b_1=E_f$,
$\beta_3=a_0$ and $A_1=C_0e^{a_0}$ yields constants $a_f>0$ and
$C_f<\infty$ such that
\begin{equation}\label{eq:common-derived-f-envelope}
 f(t,x,v)\le C_fe^{-a_f|v|^2}
 \begin{cases}
  \langle x\rangle^{-M},&\mathcal D=\R^d,\\
  1,&\mathcal D=\T^d,
 \end{cases}
 \qquad 0\le t\le T.
\end{equation}
On $\R^d$, choose
\begin{equation}\label{eq:TV-alpha-choice}
 d<M_2<M,\qquad d(q-1)<\alpha<(q-1)M_2,
\end{equation}
and use the spatial weight
\[
 \varpi(x)=\langle x\rangle^\alpha,
 \qquad
 \varpi_k(X_k)=\prod_{i=1}^k\langle x_i\rangle^\alpha.
\]
On $\T^d$ set $\varpi\equiv1$ and $\varpi_k\equiv1$. On $\R^d$,
\eqref{eq:TV-alpha-choice} implies
\begin{equation}\label{eq:common-weight-compatibility}
 \langle x\rangle^{\alpha-(q-1)M_2}\le1,
 \qquad
 \int_{\R^d}\langle x\rangle^{\alpha-qM_2}\,\dd x<\infty,
 \qquad
 \int_{\R^d}\langle x\rangle^{-\alpha/(q-1)}\,\dd x<\infty.
\end{equation}
For a positive spatial weight $\varpi$, define
\begin{equation}\label{eq:common-kernel-norm}
 \|K\|_{q,\varpi}
 :=\left(\sup_{x\in\mathcal D}\int_{\mathcal D}|K(x-y)|^{q'}
          \varpi(y)^{-1/(q-1)}\,\dd y\right)^{1/q'}.
\end{equation}
By \Cref{lem:common-riesz-regularization}, the conditions
$(s+1)q'<d$ and, on $\R^d$, $\alpha>d(q-1)$ imply
\begin{equation}\label{eq:kernel-assumptions}
 \sup_{0<\varepsilon<\varepsilon_0}\|K_\varepsilon\|_{q,\varpi}<\infty,
 \qquad
 \|K_\varepsilon-K\|_{q,\varpi}\longrightarrow0,
 \qquad
 \|K_\varepsilon-K\|_{L^1(\mathcal D)}
 \le C\varepsilon^{d-s-1}.
\end{equation}
Equation \eqref{eq:common-derived-f-envelope} implies
$\sup_{t\le T}\|\rho_f(t)\|_\infty<\infty$, while
\Cref{ass:limiting-solution} ensures $\|\rho_f(t)\|_1=1$.
Consequently, uniformly in $t,\varepsilon$, we have
\[
 (|K_\varepsilon|*\rho_f)(t,x)
 \le C\|\rho_f(t)\|_\infty
          \int_0^{r_0}r^{d-s-2}\,\dd r
      +C\|\rho_f(t)\|_1\le C_T.
\]
The same estimate for $K$ yields
\begin{equation*}
 \sup_{t\le T}\left(
 \||K|*\rho_f(t)\|_\infty
 +\sup_{0<\varepsilon<\varepsilon_0}\||K_\varepsilon|*\rho_f(t)\|_\infty
 \right)<\infty.
\end{equation*}

Choose $C_1\ge1$ so that
$e_{N,k}^\varepsilon\le C_1e_{N,k}$, and fix
\[
 \beta_0=\frac{\beta_2}{4C_1},\qquad
 \beta_1=\frac{\beta_2}{2C_1}.
\]
The initial pointwise bound \eqref{eq:comparison-initial-envelope} then implies
\begin{equation}\label{eq:comparison-initial-regularized-envelope}
 G_{N,k}^0\le A^k e^{-\beta_1e_{N,k}^\varepsilon}
 \prod_{i=1}^k\langle x_i\rangle^{-M_2}.
\end{equation}
For the initial-data approximation in
\Cref{lem:common-smooth-initial-data}, use the exponent $\beta_0$ and set
\[
 \beta_T=\frac{\beta_0}{1+(4\sigma+2)\beta_0T},
 \qquad 0<\beta_T<\beta_0<\beta_1<\beta_2\le\beta.
\]

\subsection{Weighted estimate for the BBGKY interaction }\label{subsec:common-weighted-tools}

\begin{lemma}[{\cite[Lemma~3.2]{JiangWuShortTime}}]\label{lem:common-weighted-holder}
Let $1<q<\infty$, $\lambda>0$, and let $\varpi:\mathcal D\to(0,\infty)$
be measurable. Let $K$, $\mathcal E_k$ and
$\widehat{\mathcal E}_{k+1}$ be measurable and assume
\[
 \|K\|_{q,\varpi}<\infty,
 \qquad
 \widehat{\mathcal E}_{k+1}\ge\mathcal E_k+1+|v_{k+1}|^2.
\]
Set $\varpi_k=\prod_{j=1}^k\varpi(x_j)$, and use
\eqref{eq:common-kernel-norm} and \eqref{eq:common-flux-definition}.
For every measurable $h$ for which the right-hand side is finite,
$J_{i,k}[h]$ is absolutely convergent almost everywhere and
\begin{equation*}
 \int|J_{i,k}[h]|^qe^{\lambda\mathcal E_k}\varpi_k
 \le\Gamma_{d,q}(\lambda)\|K\|_{q,\varpi}^q
       \int|h|^qe^{\lambda\widehat{\mathcal E}_{k+1}}\varpi_{k+1},
\end{equation*}
where
\begin{equation*}
 \Gamma_{d,q}(\lambda)
 =e^{-\lambda}\left(\frac{\pi(q-1)}\lambda\right)^{d(q-1)/2}
 \le C_{d,q}\lambda^{-d(q-1)/2}.
\end{equation*}
\end{lemma}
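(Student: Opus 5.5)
The plan is to estimate $J_{i,k}[h](Z_k)$ pointwise by H\"older's inequality in the integration variables $(y,w)$, inserting and removing the weight $e^{\lambda\widehat{\mathcal E}_{k+1}}\varpi(y)$. Then we integrate the $q$-th power against $e^{\lambda\mathcal E_k}\varpi_k$.

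Fix $Z_k$ and write
\[
 |J_{i,k}[h]|\le\int |K(x_i-y)|\,\varpi(y)^{-1/q}e^{-\lambda\widehat{\mathcal E}_{k+1}/q}\cdot |h|\,\varpi(y)^{1/q}e^{\lambda\widehat{\mathcal E}_{k+1}/q}\,\dd y\dd w .
\]
H\"older's inequality with exponents $q',q$ gives
\[
 |J_{i,k}[h]|^q\le\Bigl(\int|K(x_i-y)|^{q'}\varpi(y)^{-q'/q}e^{-q'\lambda\widehat{\mathcal E}_{k+1}/q}\dd y\dd w\Bigr)^{q-1}\int|h|^q\varpi(y)e^{\lambda\widehat{\mathcal E}_{k+1}}\dd y\dd w .
\]
Since $q'/q=1/(q-1)$, the $\varpi$ power is exactly the one in \eqref{eq:common-kernel-norm}.

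In the first factor, use $\widehat{\mathcal E}_{k+1}\ge\mathcal E_k+1+|w|^2$. The exponential is then bounded by $e^{-\frac{\lambda}{q-1}(\mathcal E_k+1)}e^{-\frac{\lambda}{q-1}|w|^2}$. The $w$-integral is Gaussian and equals $(\pi(q-1)/\lambda)^{d/2}$. The $y$-integral is bounded by $\|K\|_{q,\varpi}^{q'}$ uniformly in $x_i$. Raising to the power $q-1$, and using $q'(q-1)=q$, the first factor is at most
\[
 e^{-\lambda(\mathcal E_k+1)}\Bigl(\frac{\pi(q-1)}{\lambda}\Bigr)^{d(q-1)/2}\|K\|_{q,\varpi}^q .
\]

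Multiply by $e^{\lambda\mathcal E_k}\varpi_k$. The factor $e^{\lambda\mathcal E_k}$ cancels, which leaves $\Gamma_{d,q}(\lambda)\|K\|_{q,\varpi}^q$. Since $\varpi_k\varpi(y)=\varpi_{k+1}$, integrating over $Z_k$ yields the stated inequality. The bound $\Gamma_{d,q}\le C_{d,q}\lambda^{-d(q-1)/2}$ follows from $e^{-\lambda}\le1$.

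Absolute convergence almost everywhere follows from the same computation applied to $|h|$. The right-hand side is finite, so the integrated bound is finite, and hence $\int|K||h|\,\dd y\dd w<\infty$ for almost every $Z_k$ (note $e^{\lambda\mathcal E_k}\varpi_k>0$).

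The only delicate point is measurability and the use of Tonelli. All integrands are nonnegative measurable, so Tonelli applies. $\mathcal E_k$ may take the value $+\infty$, for example on collision sets. There $e^{-\lambda\widehat{\mathcal E}_{k+1}}$ vanishes and the bound remains valid with the convention $0\cdot\infty=0$. I expect no real obstacle; the key step is the exact exponent bookkeeping, which makes $e^{\lambda\mathcal E_k}$ cancel.
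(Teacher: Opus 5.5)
Your proof is correct and follows the standard argument behind this lemma, which the paper quotes from its earlier work without reproducing a proof. You apply H\"older in $(y,w)$ after inserting $e^{\lambda\widehat{\mathcal E}_{k+1}}\varpi(y)$, and the bound $\widehat{\mathcal E}_{k+1}\ge\mathcal E_k+1+|w|^2$ gives a Gaussian $w$-integral that cancels $e^{\lambda\mathcal E_k}$ and produces exactly $\Gamma_{d,q}(\lambda)$. One small correction: on the set $\{\widehat{\mathcal E}_{k+1}=+\infty\}$, the factorization $|K||h|=fg$ is not justified by a $0\cdot\infty$ convention; it holds because finiteness of the right-hand side forces $h=0$ almost everywhere there.
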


\subsection{Comparison with the limiting equation}

For $1\le k\le N$, let $U_{N,k}$ be the forward density solving
\begin{equation}\label{eq:TV-U-PDE}
 \partial_tU_{N,k}+L_{N,k}U_{N,k}
 +\frac{N-k}{N}\sum_{i=1}^kE_f(t,x_i)\cdot\nabla_{v_i}U_{N,k}
 =\sigma\Delta_{V_k}U_{N,k},
 \qquad U_{N,k}(0)=G_{N,k}^0.
\end{equation}
Existence follows from
\Cref{prop:common-finite-N,prop:common-regularization-removal}.
Define $U_{N,k}^\varepsilon$ by replacing $L_{N,k}$ with
$L_{N,k}^\varepsilon$ in \eqref{eq:TV-U-PDE}, with the same prescribed
field $E_f$ and initial datum $G_{N,k}^0$.
The auxiliary equations are defined separately for each $k$.
The remainders $U_{N,k+j}-U_{N,k}f_t^{\otimes j}$, for $j=1,2$,
are estimated in \Cref{lem:ft-two-label-residual}.

We recall \cite[Lemmas~3.3--3.4]{JiangWuShortTime} with the initial
exponent $\beta_0$ and spatial exponent $M_2$ fixed in
Section~\ref{subsec:comparison-weights}. The initial bounds are
\eqref{eq:comparison-initial-envelope} and
\eqref{eq:comparison-initial-regularized-envelope}; the kernel bounds
are given by \eqref{eq:kernel-assumptions}. The regularity of the
prescribed field and the integrability of the velocity derivative
needed in the comparison follow from
\eqref{eq:intro-field-regularity}--\eqref{eq:intro-velocity-derivative}.
Thus the cited estimates apply on the prescribed interval $[0,T]$.

\begin{lemma}[{\cite[Lemma~3.3]{JiangWuShortTime}}]\label{lem:TV-U-pointwise}
For $t\in[0,T]$, the auxiliary solutions of \eqref{eq:TV-U-PDE} and its
regularized version satisfy
\begin{equation}\label{eq:TV-U-bound}
\begin{aligned}
 U_{N,k}(t)&\le A_T^ke^{-\beta_Te_{N,k}}
                     \prod_{j=1}^kw_{M_2}(x_j),
                     \\
 U_{N,k}^\varepsilon(t)&\le A_T^ke^{-\beta_Te_{N,k}^\varepsilon}
                     \prod_{j=1}^kw_{M_2}(x_j),
\end{aligned}
\end{equation}
where $w_{M_2}(x)=\langle x\rangle^{-M_2}$ on $\R^d$ and $w_{M_2}=1$
on $\T^d$. The constant $A_T\ge1$ is independent of
$N,k,\varepsilon$, and $\beta_T$ is fixed above. Moreover,
$U_{N,k}\in C([0,T];L^1)$ solves \eqref{eq:TV-U-PDE} in distributions
with initial datum $G_{N,k}^0$. At $k=N$, we have
\begin{equation}\label{eq:TV-level-N-equality}
 U_{N,N}=G_N,\qquad U_{N,N}^\varepsilon=G_N^\varepsilon.
\end{equation}
There is $B_T\ge1$ such that the particle marginals satisfy
\begin{align}
 G_{N,k}^\varepsilon(t)&\le B_T^Ne^{-\beta_Te_{N,k}^\varepsilon}
                         \prod_{j=1}^kw_{M_2}(x_j),
                         \label{eq:TV-GN-bound-regularized}\\
 G_{N,k}(t)&\le B_T^Ne^{-\beta_Te_{N,k}}
                         \prod_{j=1}^kw_{M_2}(x_j).
                         \label{eq:TV-GN-bound}
\end{align}
\end{lemma}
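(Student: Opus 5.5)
The plan is to obtain every bound in \Cref{lem:TV-U-pointwise} from \Cref{lem:common-exponential-envelope}, working first at the regularized level with fixed $\varepsilon$ and then letting $\varepsilon\to0$ at fixed $N$.

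\emph{Auxiliary equations.} For $U_{N,k}^\varepsilon$ we apply \Cref{lem:common-exponential-envelope} with $b_i(t,X_k)=\frac{N-k}{N}E_f(t,x_i)$. By \eqref{eq:intro-field-regularity} these fields lie in $C([0,T];C_b^1)$ and satisfy $L:=\sup_{t\le T}\|E_f(t)\|_\infty<\infty$, uniformly in $N,k$. The initial bound \eqref{eq:comparison-initial-regularized-envelope} holds with $\beta_1\ge\beta_0$. Since $e^{-\beta_1e}\le e^{-\beta_0 e}$, we may start the envelope at $\beta_3=\beta_0$, $A_1=A$ and $m=M_2$. The $\beta$-ODE in that proof then returns exactly the $\beta_T$ fixed above. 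This gives the bound for $U^\varepsilon_{N,k}$ with $A_T$ independent of $N,k,\varepsilon$.

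To pass to $\varepsilon\to0$ we use \Cref{prop:common-regularization-removal}, the $L^1$-stability \Cref{lem:common-L1-stability}, and $\|K_\varepsilon-K\|_{L^1}\to0$ from \eqref{eq:kernel-assumptions}. Together they give $U_{N,k}^\varepsilon\to U_{N,k}$ in $C([0,T];L^1)$ along a subsequence, with a.e. convergence. Since $e^\varepsilon_{N,k}\to e_{N,k}$ off the diagonals, which form a null set, the pointwise bound passes to the limit. The same limit shows that $U_{N,k}\in C([0,T];L^1)$ solves \eqref{eq:TV-U-PDE} in distributions: test against smooth compactly supported functions and use local integrability of $K$ together with the uniform Gaussian envelope to pass to the limit in the nonlinear-looking term $K_\varepsilon(x_i-x_j)U^\varepsilon_{N,k}$.

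\emph{Level $N$.} At $k=N$ the coefficient $(N-k)/N$ vanishes, so \eqref{eq:TV-U-PDE} is the forward Kolmogorov equation of \eqref{eq:intro-riesz-particle} started from $G_N^0$. Uniqueness of forward $L^1$ solutions (\Cref{prop:common-finite-N} with the order principle \eqref{eq:common-L1-order}), and likewise in the regularized case, gives \eqref{eq:TV-level-N-equality}.

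\emph{Particle marginals.} The bounds \eqref{eq:TV-GN-bound-regularized}--\eqref{eq:TV-GN-bound} follow by integrating the level-$N$ envelope $G_N^\varepsilon\le A_T^Ne^{-\beta_Te^\varepsilon_{N,N}}\prod_j w_{M_2}(x_j)$ over $z_{k+1},\dots,z_N$. By \eqref{eq:common-energy-identities} and $\phi_\varepsilon\ge0$ (the regularized potential keeps a positive lower bound, as $\phi\ge1$), we have $e^\varepsilon_{N,N}\ge e^\varepsilon_{N,k}+\sum_{j>k}(1+|v_j|^2)$. Each remaining variable therefore contributes at most
\[
 \int e^{-\beta_T(1+|v|^2)}w_{M_2}(x)\,\dd x\dd v=:c_T<\infty,
\]
which is finite because $M_2>d$ on $\R^d$ and the torus has unit volume. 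Hence $G_{N,k}^\varepsilon\le A_T^Nc_T^{N-k}e^{-\beta_Te_{N,k}^\varepsilon}\prod_{j\le k}w_{M_2}(x_j)$, and we may take $B_T=A_T\max\{1,c_T\}$. The unregularized bound \eqref{eq:TV-GN-bound} follows by the same a.e. limit, or directly from $U_{N,N}=G_N$.

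\emph{Main obstacle.} The delicate step is the regularization removal: identifying the limit of $U^\varepsilon_{N,k}$ as a distributional solution and ensuring that the comparison principle of \Cref{lem:common-exponential-envelope} applies with the singular kernel. The envelope estimate itself has to be checked only at fixed $\varepsilon$, where $K_\varepsilon$ is bounded, and its constants are $\varepsilon$-uniform. The remaining issue is then the weak compactness and uniqueness argument, which I would defer to Appendix~\ref{app:fixed-N-passage}.
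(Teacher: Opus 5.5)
Your proposal is correct and follows essentially the paper's route: you apply \Cref{lem:common-exponential-envelope} to the regularized auxiliary equation with $b_i=\frac{N-k}{N}E_f(t,x_i)$ and $\beta_3=\beta_0$, pass to $\varepsilon\to0$ through \Cref{prop:common-regularization-removal} and a.e.\ convergent subsequences off the collision diagonals, and get the marginal bounds by integrating the level-$N$ envelope using $e_{N,N}^\varepsilon\ge e_{N,k}^\varepsilon+\sum_{j>k}(1+|v_j|^2)$. (The paper runs this with the smoothed data of \Cref{lem:common-smooth-initial-data} and $A_1=A_2$; applying the comparison principle directly to the $L^1$ datum $G_{N,k}^0$ with $A_1=A$, as you do, also works.) One small correction: \eqref{eq:common-L1-order} only covers bounded $C^1$ forces, so justify the singular identity $U_{N,N}=G_N$ either by pathwise uniqueness in \Cref{prop:common-finite-N}, since both are laws of the same SDE started from $G_N^0$, or by passing $U_{N,N}^\varepsilon=G_N^\varepsilon$ to the limit, rather than by an $L^1$ uniqueness principle for the singular equation.
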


\begin{remark}\label{rem:auxiliary-pointwise-bounds}
For $0<\lambda\le(q-1)\beta_T/2$, the inequalities
$q\beta_T-\lambda\ge\beta_T$ and $\alpha-qM_2\le-M_2$ on $\R^d$,
together with \eqref{eq:common-weight-compatibility} and
\eqref{eq:TV-GN-bound}, give
\begin{align*}
 \int(1+|V_k|^2)G_{N,k}(t)^q e^{\lambda e_{N,k}}\varpi_k\,\dd Z_k
 &\le B_T^{qN}\int(1+|V_k|^2)e^{-\beta_T(k+|V_k|^2)}
                         \prod_{i=1}^kw_{M_2}(x_i)\,\dd Z_k\\
 &\le B_T^{qN}\left(1+\frac{dk}{2\beta_T}\right)
 \left[\left(\frac\pi{\beta_T}\right)^{d/2}
             \int_{\mathcal D}w_{M_2}(x)\,\dd x\right]^k
 \le C_T^N.
\end{align*}
The last step uses $k\le N$ and
$1+dk/(2\beta_T)\le(1+d/(2\beta_T))^k$.
By \eqref{eq:TV-GN-bound-regularized}, the same estimate holds for
$G_{N,k}^\varepsilon$ with $e_{N,k}^\varepsilon$, uniformly in
$\varepsilon$.
\end{remark}

\begin{lemma}[{\cite[Lemma~3.4]{JiangWuShortTime}}]\label{lem:TV-U-product}
There exists $C_T\ge1$, independent of $N,k,\varepsilon$, such that
\begin{equation}\label{eq:TV-U-product-regularized-sharp}
\begin{aligned}
 \sup_{t\le T}\|U_{N,k}(t)-f_t^{\otimes k}\|_1
 &\le \|G_{N,k}^0-f_0^{\otimes k}\|_1+\frac{C_Tk^{2-1/q}}N,
 \\
 \sup_{t\le T}\|U_{N,k}^\varepsilon(t)-f_t^{\otimes k}\|_1
 &\le \|G_{N,k}^0-f_0^{\otimes k}\|_1+\frac{C_T}{N}
       \bigl(k^{2-1/q}+k^2\varepsilon^{d-s-1}\bigr).
\end{aligned}
\end{equation}
By \eqref{eq:conditioned-initial-error}, both right-hand sides are
bounded by $C_T^k\varepsilon_N$ for $0<\varepsilon<\varepsilon_0$,
after increasing $C_T$.
\end{lemma}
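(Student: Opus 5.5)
The plan is to compare $U_{N,k}$ with $f_t^{\otimes k}$ directly through the linear equation \eqref{eq:TV-U-PDE}. Since $f$ solves \eqref{eq:intro-riesz-vpfp} classically with field $E_f$, the product $f_t^{\otimes k}$ solves
\[
 \partial_t f_t^{\otimes k}+\sum_{i=1}^k v_i\cdot\nabla_{x_i}f_t^{\otimes k}
 +\sum_{i=1}^kE_f(t,x_i)\cdot\nabla_{v_i}f_t^{\otimes k}
 =\sigma\Delta_{V_k}f_t^{\otimes k}.
\]
Inserting $f_t^{\otimes k}$ into the operator of \eqref{eq:TV-U-PDE}, the difference $D_k=U_{N,k}-f_t^{\otimes k}$ solves the same linear equation with source $-\mathcal R_k$, where
\[
 \mathcal R_k=\frac1N\sum_{i=1}^k\Bigl[\sum_{j\ne i,\,j\le k}\bigl(K(x_i-x_j)-E_f(t,x_i)\bigr)-E_f(t,x_i)\Bigr]\cdot\nabla_{v_i}f_t^{\otimes k}.
\]
Here I used $\frac{N-k}{N}-1=-\frac kN=-\frac{k-1}{N}-\frac1N$ to center each pair force against the mean field. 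The equation \eqref{eq:TV-U-PDE} is a transport--diffusion equation with divergence-free drift in $(X_k,V_k)$: the internal force depends only on positions and $E_f$ is velocity independent. I would therefore invoke the $L^1$ stability/comparison principle (\Cref{lem:common-L1-stability}, first in the regularized form with $K_\varepsilon$, for which all coefficients are bounded) to get
\[
 \|D_k(t)\|_1\le\|G_{N,k}^0-f_0^{\otimes k}\|_1+\int_0^t\|\mathcal R_k(u)\|_1\,\dd u.
\]

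The core step is bounding $\|\mathcal R_k\|_1$. The term $\frac1N\sum_iE_f(x_i)\cdot\nabla_{v_i}f^{\otimes k}$ contributes at most $\frac kN\|E_f\|_\infty\|\nabla_vf\|_1$. For the pair sum, fix $i$ and integrate first over $z_j$, $j\ne i$. Conditionally on $x_i$, the variables
\[
 Y_j=K(x_i-x_j)-E_f(x_i),\qquad j\ne i,
\]
are i.i.d.\ under $\rho_f(x_j)\,\dd x_j$ and centered, because $E_f=K*\rho_f$. With $p=q'\in(1,2]$, the von Bahr--Esseen inequality gives
\[
 \E\Bigl|\sum_{j\ne i}Y_j\Bigr|\le\Bigl(\E\Bigl|\sum_{j\ne i}Y_j\Bigr|^p\Bigr)^{1/p}\le(2k)^{1/p}\sup_x\Bigl(\int|K(x-y)-E_f(x)|^{q'}\rho_f(y)\,\dd y\Bigr)^{1/q'}.
\]
The supremum is bounded uniformly on $[0,T]$. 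Near the origin this uses $(s+1)q'<d$ and $\|\rho_f\|_\infty\le C$ from \eqref{eq:common-derived-f-envelope}; away from the origin, $K$ is bounded and $\|\rho_f\|_1=1$; and $E_f$ is bounded. Multiplying by $|\nabla_{v_i}f(z_i)|$, integrating in $z_i$, and summing over $i$ gives
\[
 \|\mathcal R_k(t)\|_1\le \frac{C}{N}\bigl(k\cdot k^{1-1/q}+k\bigr)\|\nabla_vf(t)\|_1.
\]
Integrating in time and using \eqref{eq:intro-velocity-derivative} yields the term $C_Tk^{2-1/q}/N$.

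For the regularized statement, the same argument with $K_\varepsilon$ gives an extra piece $\frac1N\sum_{i\ne j}(K_\varepsilon-K)(x_i-x_j)\cdot\nabla_{v_i}f^{\otimes k}$. Its $L^1$ norm is at most $\frac{k^2}{N}\|K_\varepsilon-K\|_{L^1}\|\rho_f\|_\infty\|\nabla_vf\|_1$, and by \eqref{eq:kernel-assumptions} this gives $Ck^2\varepsilon^{d-s-1}/N$. The unregularized bound then follows by letting $\varepsilon\to0$, using $U_{N,k}^\varepsilon\to U_{N,k}$ in $C([0,T];L^1)$ from \Cref{prop:common-regularization-removal} and lower semicontinuity of the norm. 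The final sentence of the lemma follows from \eqref{eq:conditioned-initial-error}, together with $k^{2-1/q}\le k^2\le 2^{2k}$ and $\varepsilon<\varepsilon_0$, after enlarging $C_T$.

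The main obstacle is rigor rather than the estimate itself. We need $\mathcal R_k\in L^1(0,T;L^1)$, and $f^{\otimes k}$ must qualify as a test solution in the stability lemma, which requires the classical regularity of $f$ and $E_f\in C([0,T];C_b^1)$. I would handle this by applying the comparison at the regularized level, where the drift is Lipschitz. The centering argument also needs care: only $\nabla_{v_i}f(z_i)$ carries a derivative, so the conditional expectation over the other labels factors exactly, and this factorization is what produces the improved power $k^{1-1/q}$ instead of $k$.
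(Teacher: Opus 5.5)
Your proposal is correct and follows essentially the route behind the cited proof. You apply the $L^1$ stability principle at the regularized level with $u=f_t^{\otimes k}$, whose hypotheses follow from the classical regularity of $f$, $E_f\in C([0,T];C_b^1)$ and $\int_0^T\|\nabla_v f\|_1<\infty$. You then bound the residual through a centered $L^{q'}$ moment estimate over the other $k-1$ labels, which produces $k\cdot k^{1-1/q}$, and control $K_\varepsilon-K$ through $\|K_\varepsilon-K\|_{L^1}\le C\varepsilon^{d-s-1}$ and $\|\rho_f\|_\infty$. Finally you remove $\varepsilon$ using $U_{N,k}^\varepsilon\to U_{N,k}$ in $C([0,T];L^1)$, which is what the statement's form and the paper's remarks on this lemma indicate.
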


\subsection{A weighted energy inequality}

For $0<\lambda_0\le1$, we set
\begin{equation}\label{eq:TV-global-lambda}
 \lambda(t)=\frac{\lambda_0}{1+\Lambda\lambda_0t},
 \qquad \lambda'=-\Lambda\lambda^2,
\end{equation}
where $\Lambda$ is given by \eqref{eq:common-flux-constants}.
We use the following form of \cite[Lemma~3.6]{JiangWuShortTime};
see also \cite[Lemma~9, Section~3.2]{BJS2025}.

\begin{lemma}\label{lem:common-weighted-flux-energy}
Let $2\le q<\infty$ and let $\lambda$ be given by
\eqref{eq:TV-global-lambda} with $0<\lambda_0\le1$.  Suppose that $H$ and
$\mathcal F_i$ are smooth and satisfy
\[
 \partial_tH+L_{N,k}^\varepsilon H-\sigma\Delta_{V_k}H
 +\sum_{i=1}^k\nabla_{v_i}\cdot\mathcal F_i=0,
\]
and set
\[
 W=e^{\lambda e_{N,k}^\varepsilon}\varpi_k,\qquad
 Y_k(t)=\int|H|^qW,\qquad
 \mathcal Q_k(t)=\sum_{i=1}^k\int|\mathcal F_i|^qW.
\]
Assume that $Y_k,\mathcal Q_k\in L^1(0,T)$ and
\[
 \int_0^T\!\int(1+|V_k|^2)|H|^qW\,\dd Z_k\dd t<\infty.
\]
Set
\begin{equation}\label{eq:common-flux-constants}
 \Lambda=4\left(1+4\sigma+\frac{q(q-1)}\sigma+q^2\right),\qquad
 C_2=1+2d\sigma+\frac{q(q-1)}\sigma+q^2.
\end{equation}
Then the following inequality holds in distributions on $(0,T)$:
\begin{equation*}
 Y_k'(t)\le
 C_2\bigl(1+\mathbf1_{\{\mathcal D=\R^d\}}\alpha^2\lambda(t)^{-2}\bigr)
 kY_k(t)
 +C_2\lambda(t)^{-(q-2)}
 \sum_{i=1}^k\int|\mathcal F_i|^qW.
\end{equation*}

The same estimate holds for every smooth bounded nonnegative even pair potential
with bounded first and second derivatives, using its generator and energy.
\end{lemma}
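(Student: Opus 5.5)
The plan is a direct weighted $L^q$ energy computation. We test the equation against $q|H|^{q-2}H\,W$ and integrate by parts. The decay of $\lambda$ in \eqref{eq:TV-global-lambda} then produces a negative term $-\Lambda\lambda^2 e_{N,k}^\varepsilon|H|^qW$. Since $e_{N,k}^\varepsilon\ge k+|V_k|^2$, this term absorbs every contribution of the form $\lambda^2(1+|V_k|^2)|H|^qW$. We first carry out the computation formally for smooth $H,\mathcal F_i$. We then justify it by multiplying by a smooth cutoff $\chi_R(Z_k)$ and a test function in time, and letting $R\to\infty$. The boundary terms created by $\nabla\chi_R$ are bounded by $R^{-1}\int(1+|V_k|^2)(|H|^q+|\mathcal F|^q)W$, or by similar quantities. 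They vanish in $L^1(0,T)$ by the assumed integrability of $Y_k$, $\mathcal Q_k$ and $(1+|V_k|^2)|H|^qW$. This is why the inequality is stated only in distributions on $(0,T)$.

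\textbf{Step 1: the individual terms.} We compute the contribution of each part of the equation.
\begin{itemize}
\item \emph{Time derivative of the weight.} Since $\partial_t W=\lambda' e_{N,k}^\varepsilon W=-\Lambda\lambda^2e_{N,k}^\varepsilon W$, this gives the good term $-\Lambda\lambda^2\int e_{N,k}^\varepsilon|H|^qW$.
\item \emph{Transport.} The field $L_{N,k}^\varepsilon$ is divergence free in phase space and satisfies $L_{N,k}^\varepsilon e^\varepsilon_{N,k}=0$ by \eqref{eq:common-energy-identities}. Hence $\int (L^\varepsilon_{N,k}|H|^q)W=-\int|H|^q\sum_i v_i\cdot\nabla_{x_i}\log\varpi_k\,W$. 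On $\R^d$ this is bounded by $\alpha\sum_i|v_i|\int|H|^qW$, using $|x|/\langle x\rangle^2\le1/2$. By Young's inequality, $\alpha|v_i|\le\lambda^2|v_i|^2+\alpha^2\lambda^{-2}/4$; the first part is absorbed in Step 2 and the second gives the factor $\alpha^2\lambda^{-2}$. On $\T^d$ this term vanishes.
\item \emph{Diffusion.} We use $\nabla_{v_i}W=2\lambda v_iW$. The diffusion term produces the dissipation $-\sigma q(q-1)\int|H|^{q-2}|\nabla_VH|^2W$, plus the term $\sigma\int|H|^q\sum_i(2d\lambda+4\lambda^2|v_i|^2)W$. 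With $\lambda\le1$, the $2d\lambda$ part gives $2d\sigma k\,Y_k$.
\item \emph{Flux.} Integrating by parts in $v_i$ gives $q(q-1)\int\mathcal F_i\cdot\nabla_{v_i}H\,|H|^{q-2}W+2q\lambda\int \mathcal F_i\cdot v_i|H|^{q-2}H\,W$. We treat the two pieces separately.
  \begin{itemize}
  \item For the first piece, Young's inequality absorbs half of the dissipation. It leaves $\frac{q(q-1)}{2\sigma}\int|\mathcal F_i|^2|H|^{q-2}W$. A second Young inequality with exponents $q/2$ and $q/(q-2)$ bounds this by $\lambda^{-(q-2)}|\mathcal F_i|^q+\lambda^2|H|^q$, up to constants $\le q(q-1)/\sigma$.
  \item For the second piece, we split $a=\lambda^{-(q-2)/q}|\mathcal F_i|$ and $b=2q\lambda^{2(q-1)/q}|v_i||H|^{q-1}$, with conjugate exponents $q$ and $q'$. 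This gives $\lambda^{-(q-2)}|\mathcal F_i|^q+C_q\lambda^2|v_i|^{q'}|H|^q$. Since $q\ge2$, we have $|v_i|^{q'}\le1+|v_i|^2$, and the constant is of order $q^2$.
  \end{itemize}
\end{itemize}

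\textbf{Step 2: bookkeeping.} Collecting terms, every remaining contribution is of the form $c\lambda^2(1+|v_i|^2)|H|^qW$. The constant $c$ is at most $1+4\sigma+q(q-1)/\sigma+q^2$, and $\Lambda$ in \eqref{eq:common-flux-constants} is four times this sum. Using $\Lambda\lambda^2e^\varepsilon_{N,k}\ge\Lambda\lambda^2\sum_i(1+|v_i|^2)$, all these terms are dominated by the good term from Step 1. What remains is $C_2(1+\alpha^2\lambda^{-2})kY_k$ plus $C_2\lambda^{-(q-2)}\sum_i\int|\mathcal F_i|^qW$, which is the claimed inequality. For a smooth bounded even pair potential, the same computation applies verbatim: the only structural facts used are $L e=0$, the divergence-free transport, and $e\ge k+|V_k|^2$.

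\textbf{Main obstacle.} The main difficulty is not the algebra but the two-sided bookkeeping. All $\lambda^2|v|^2$ terms, from the diffusion, the flux cross term and the spatial weight, must be absorbed with the specific constant $\Lambda$. At the same time the exponents of $\lambda$ must come out exactly as $\lambda^{-(q-2)}$ on the flux and $\lambda^{-2}$ on the spatial-weight term. I would fix the Young splittings as above so that each yields exactly $\lambda^2(1+|v_i|^2)$. I would also handle the cutoff in the justification by keeping $\nabla\chi_R$ terms proportional to $(1+|V_k|)$, which is controlled by the assumed moment integrability.
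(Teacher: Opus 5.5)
Your proposal is correct and follows the paper's proof essentially step for step. Both use the same cutoff-localized test function $q|H|^{q-2}HW$ and the same identity for $(\partial_tW+L_{N,k}^\varepsilon W+\sigma\Delta_{V_k}W)/W$. Both apply Young's inequality to the two flux pieces and to the spatial-weight term, and absorb every $\lambda^2(k+|V_k|^2)|H|^qW$ contribution through $\lambda'=-\Lambda\lambda^2$ and $e_{N,k}^\varepsilon\ge k+|V_k|^2$. The paper additionally keeps the dissipation and a $\frac{\Lambda}{4}\lambda^2\int|V_k|^2|H|^qW$ term on the left, which the statement does not need.

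There is one harmless slip in your bookkeeping. Your direct $(q,q')$ splitting of the cross term gives the constant $(2q)^{q'}/q'$, which can exceed $q^2$ (it equals $8$ at $q=2$). So the claim $c\le1+4\sigma+q(q-1)/\sigma+q^2$ is not literally true. Since $(2q)^{q'}/q'\le4q^2$, the factor $4$ built into $\Lambda$ still absorbs all these terms.
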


\begin{proof}
Using \eqref{eq:common-energy-identities} and
$\Delta_{V_k}e_{N,k}^\varepsilon=2dk$, we obtain $\nabla_{v_i}W=2\lambda v_iW$ and
\begin{align*}
 \frac{\partial_tW+L_{N,k}^\varepsilon W+\sigma\Delta_{V_k}W}{W}
 =\lambda'e_{N,k}^\varepsilon+2d\sigma\lambda k
       +4\sigma\lambda^2|V_k|^2
       +\ind_{\{\mathcal D=\R^d\}}
         \alpha\sum_i\frac{v_i\cdot x_i}{1+|x_i|^2}.
\end{align*}
Estimate the two flux terms in
\eqref{eq:fixed-N-localized-energy-identity} by Young's inequality,
as in \cite[proof of Lemma~3.6, estimates following (3.45)]{JiangWuShortTime}.
On $\R^d$, we also have
\[
 \frac{|L_{N,k}^\varepsilon\varpi_k|}{\varpi_k}
 \le\frac\alpha2\sum_i|v_i|
 \le\frac\Lambda4\lambda^2|V_k|^2
       +\frac{\alpha^2}{4\Lambda}\lambda^{-2}k.
\]
Insert these bounds, $\lambda'=-\Lambda\lambda^2$ and
$e_{N,k}^\varepsilon\ge k+|V_k|^2$ into
\eqref{eq:fixed-N-localized-energy-identity}.
The cutoff limit justified in Section~\ref{subsec:common-cutoff-testing}
yields
\begin{align*}
& Y_k'+\frac{3q(q-1)\sigma}{4}
       \sum_i\int |H|^{q-2}|\nabla_{v_i}H|^2W\\
 \le&\left[2d\sigma\lambda+
     \left(\frac{q(q-1)}\sigma+q^2-\Lambda\right)\lambda^2
     +\ind_{\{\mathcal D=\R^d\}}\frac{\alpha^2}{4\Lambda}\lambda^{-2}
     \right]kY_k\\
 &+\left(4\sigma+1-\frac{3\Lambda}{4}\right)
                  \lambda^2\int |V_k|^2|H|^qW
       +\left(\frac{q(q-1)}\sigma+q^2\right)
                  \lambda^{-(q-2)}\mathcal Q_k.
\end{align*}
With the choices \eqref{eq:common-flux-constants} and $\lambda\le1$, we deduce
\begin{align}\label{eq:common-flux-dissipation}
& Y_k'+\frac{3q(q-1)\sigma}{4}
     \sum_i\int |H|^{q-2}|\nabla_{v_i}H|^2W
     +\frac{\Lambda}{4}\lambda^2\int |V_k|^2|H|^qW\notag\\
 \le &C_2\bigl(1+\ind_{\{\mathcal D=\R^d\}}\alpha^2\lambda^{-2}\bigr)kY_k
       +C_2\lambda^{-(q-2)}\mathcal Q_k.
\end{align}
This completes the proof.
\end{proof}

\section{Propagation of chaos on a fixed time interval}
\label{sec:fixed-time-riesz}

We first compare the evolution from $G_N^0$ with $f_t^{\otimes k}$,
then remove the conditioning to prove \Cref{thm:fixed-time-riesz}.
Use the auxiliary solutions of Section~\ref{sec:common-comparison}
and the error scale \eqref{eq:comparison-error-scale}. Set
\[
 \kappa=q(d-s-1)-d>0.
\]
On $\R^d$, use the exponents and spatial weights of
\eqref{eq:TV-alpha-choice}. On $\T^d$, set $\alpha=0$ and
$\varpi_k=1$, and omit the spatial factors.

The constructions below have three distinct purposes. The splitting
radius $a$ controls the norm of the near-field force in the hierarchy
estimate. The map $\Theta_B$ modifies the empirical far-field force
to give a bound independent of $a$. For the energy calculations,
$h$ regularizes the singular force and $n$ indexes smooth
approximations of the conditioned initial density. We choose $a$
for the prescribed interval $[0,T]$ before taking these approximation
limits. The order of the choices is specified at the end of the next
subsection.

\subsection{Decomposition of the potential}

Write $c=c_{d,s}$. Choose $\chi\in C^\infty([0,\infty);[0,1])$
equal to one on $[0,1]$ and zero on $[2,\infty)$, and define
\begin{equation}\label{eq:ft-potential-split}
 \begin{aligned}
 \widetilde\phi^a(x)
 &=cs\int_{|x|}^\infty\chi(r/a)r^{-s-1}\,\dd r,
 &\widetilde K^a(x)&=cs\chi(|x|/a)\frac{x}{|x|^{s+2}},\\
 \overline\phi^a&=\phi-\widetilde\phi^a,
 &\overline K^a&=K-\widetilde K^a.
 \end{aligned}
\end{equation}
On $\T^d$, take $0<a\le a_2$ so that $B_{2a}$ lies in
the singular coordinate chart, and extend $\widetilde\phi^a$ by zero.
Fix $a_2\le1$ sufficiently small that
\begin{equation}\label{eq:ft-split-properties}
 0\le\widetilde\phi^a\le\phi,\qquad
 0\le\overline\phi^a\le C_a<\infty.
\end{equation}
For $0<|x|\le2a$ in the torus chart,
\eqref{eq:intro-periodic-riesz-potential} implies
\[
 \begin{aligned}
 \overline\phi^a(x)
 &=H_{\T,s}(x)+cs\int_{|x|}^{\infty}
                    (1-\chi(r/a))r^{-s-1}\,\dd r\\
 &\ge H_{\T,s}(x)+c(2a)^{-s}\ge0
 \end{aligned}
\]
after decreasing $a_2$. Outside $B_{2a}$, we have
$\overline\phi^a=\phi\ge1$. On $\R^d$, the same calculation applies with
$H_{\T,s}$ replaced by $1$ in
\eqref{eq:intro-whole-riesz-potential}.
The force $\overline K^a$ is smooth, bounded, Lipschitz and odd,
with $\overline K^a(0)=0$. On $\R^d$ it equals $K$ outside $B_{2a}$.
Set
\begin{equation}\label{eq:ft-far-force-size}
 M_a=\|\overline K^a\|_\infty\le C(1+a^{-s-1}),
 \qquad 0<a\le a_2.
\end{equation}
Indeed, the singular part
$(1-\chi(|x|/a))c_{d,s}s x/|x|^{s+2}$ vanishes on $B_a$
and is bounded by $Ca^{-s-1}$ elsewhere. The smooth part and the
force outside the chart are bounded.
Radial integration and \eqref{eq:intro-q-range} yield
\begin{equation}\label{eq:ft-small-near-force}
 \begin{aligned}
 \|\widetilde K^a\|_{q,\varpi}^q
 \le\|\widetilde K^a\|_{L^{q'}}^q
 \le C\left(\int_0^{2a}r^{d-1-(s+1)q'}\,\dd r\right)^{q-1}
 \le Ca^\kappa,\qquad
 \|\widetilde K^a\|_1\le Ca^{d-s-1}.
 \end{aligned}
\end{equation}

The coefficient of $a^\kappa$ in \eqref{eq:ft-small-near-force} may be taken as
\begin{equation}\label{eq:ft-near-kernel-constant}
 C_3=(c_{d,s}s)^q\left(
 \frac{|\mathbb S^{d-1}|\,2^{d-(s+1)q'}}{d-(s+1)q'}\right)^{q-1}.
\end{equation}

Set $m_f^a=\overline K^a*\rho_f$. Choose
\begin{equation*}
 B=2+\|E_f\|_{C_tL^\infty}
       +C_{d,s}(a_2)^{d-s-1}
                        \|\rho_f\|_{L_t^\infty L^\infty},
 \qquad \Theta_B(u)=\chi(|u|/B)u.
\end{equation*}
Here $C_{d,s}$ is the constant in the $L^1$ estimate
\eqref{eq:ft-small-near-force}. Since
$m_f^a=E_f-\widetilde K^a*\rho_f$, we obtain
\begin{equation}\label{eq:ft-clipping-margin}
 \sup_{0<a\le a_2}\sup_{t\le T}\|m_f^a(t)\|_\infty
 \le B-1,\qquad B_0:=\|\Theta_B\|_\infty\le2B.
\end{equation}
The map $\Theta_B\in C_c^\infty(\R^d;\R^d)$ equals the identity on
$\{|u|\le B\}$ and has a Lipschitz constant bounded in terms of $\chi$.
Let $\widetilde G_N$ be the law, with initial density $G_N^0$, of the
modified system
\begin{equation}\label{eq:ft-clipped-system}
 \begin{aligned}
 \dd X_i&=V_i\,\dd t,\\
 \dd V_i&=\left\{\frac1N\sum_{j\ne i}\widetilde K^a(X_i-X_j)
   +\Theta_B\left(\frac1N\sum_{j\ne i}\overline K^a(X_i-X_j)\right)
          \right\}\dd t+\sqrt{2\sigma}\,\dd B_i.
 \end{aligned}
\end{equation}
Denote its trajectories by $(\widetilde X_i,\widetilde V_i)_{i=1}^N$.
Apply \Cref{lem:ft-approximation}\ref{item:ft-approximation-dynamics}
with the same cutoff $\chi$ and
\[
 b_i^N(X_N)=\Theta_B\left(\frac1N\sum_{j\ne i}
                                         \overline K^a(x_i-x_j)\right).
\]
This field is smooth and permutation-equivariant, with
$\sup_{N,i}\|b_i^N\|_\infty\le B_0$. We use the approximations
$G_N^{0,h,n},\widetilde G_N^{h,n},U_{N,k}^{h,n}$ constructed in
Section~\ref{subsec:ft-approximation}, with
$0<h<a/10$ and $\xi_{n,h}=\|G_N^{0,h,n}-G_N^0\|_1$.
Here the first density approximates $G_N^0$, and the other two are
the corresponding modified and auxiliary evolutions.
The auxiliary equation uses the full regularized kernel
$K^h=\widetilde K^{a,h}+\overline K^a$ and the prescribed field
$(N-k)E_f/N$ from \eqref{eq:TV-U-PDE}.

With the exponents $\beta_0,\beta_1$ fixed in
Section~\ref{sec:common-comparison}, set
\begin{equation*}
 \begin{gathered}
 \beta_4=\tfrac12\min\{\beta_T,a_f\},\qquad
 \lambda_0=\tfrac18\min\{1,(q-1)\beta_4,q\beta_T\}.
 \end{gathered}
\end{equation*}
Here $\beta_T$ is from \eqref{eq:TV-U-bound} and $a_f$ is from
\eqref{eq:common-derived-f-envelope}.
Use $\lambda$ from \eqref{eq:TV-global-lambda} with this $\lambda_0$
and $\Lambda,C_2$ from \eqref{eq:common-flux-constants}. Then
\begin{equation}\label{eq:ft-exponent-margin}
 0<4\lambda_0<\min\{1,(q-1)\beta_4,q\beta_T\}.
\end{equation}
We define
\begin{equation}\label{eq:ft-weights}
 \begin{aligned}
 & \widetilde e_{N,k}
=\sum_{i=1}^k(1+|v_i|^2)
        +\frac1N\sum_{i\ne j\le k}\widetilde\phi^a(x_i-x_j),\\
 &\widetilde W_{N,k}(t)=e^{\lambda(t)\widetilde e_{N,k}}\varpi_k, \quad 
 W_{N,k}(t)=e^{\lambda(t)e_{N,k}}\varpi_k.
 \end{aligned}
\end{equation}
Superscript $h$ on either weight replaces its energy by
$\widetilde e_{N,k}^h$ or $e_{N,k}^h$, respectively. Since the far-field potential
is unchanged by regularization, we have
\begin{equation}\label{eq:ft-full-near-energy}
 \begin{aligned}
 0\le e_{N,k}^h-\widetilde e_{N,k}^h
 =\frac1N\sum_{i\ne j\le k}\overline\phi^{a}(x_i-x_j)\le\|\overline\phi^{a}\|_\infty\frac{k(k-1)}N
 \le\|\overline\phi^{a}\|_\infty k.
 \end{aligned}
\end{equation}
The same identity and bounds hold without the superscript $h$.

The parameters are chosen in the following order. First fix the
spatial exponents and $\beta_0,\beta_1,\beta_T$ as in
Section~\ref{sec:common-comparison}. Then choose $B,\beta_4,\lambda_0$
as above. These constants are independent of $a,N,h,n$.
By \Cref{rem:smoothing-general-potential}, the constant $A_2$ in
\eqref{eq:ft-smooth-initial-envelope} is also independent of $a,N,h,n$.
Set
\begin{equation*}
 R_0=A_2^{q-1}.
\end{equation*}
The constants $C_5,C_6$ in \eqref{eq:ft-near-coefficients} then
determine $a_T$ and $R_T$ through \eqref{eq:ft-explicit-radius}.
The estimates below hold for $0<a\le a_T$; we take $a=a_T$ in the
final comparison. At fixed $N,a$, choose the approximation parameters
$h,n$ with $0<h<a/10$, using the smooth initial data indexed by $n$.
In the energy calculation, we first remove the phase-space cutoff
and then the smoothing of the prescribed field $E_f$. We next let
$n\to\infty$ and $h\downarrow0$, in this order. Constants involving
derivatives of $\overline K^{a_T}$ may depend on $a_T$; the constants
in the marginal estimates are independent of $N,k,h,n$.

\Cref{lem:ft-approximation} supplies the energy tests and convergence
used below. Its pointwise bounds at fixed $N$ follow from
\Cref{lem:common-exponential-envelope} and the construction of the
dynamics in Section~\ref{sec:kernel-verification}. These bounds
justify localization and passage to the limit. The estimates uniform
in $N$ are established in this section from the regularized equations.

\subsection{Weighted estimates for the modified system}

We define
\begin{equation}\label{eq:ft-near-coefficients}
 \begin{aligned}
 a_B(t)&=C_2\left(1+\ind_{\{\mathcal D=\R^d\}}\alpha^2\lambda(t)^{-2}
                  +2^{q-1}B_0^q\lambda(t)^{-(q-2)}\right),\\
 \ell_a(t)&=2^{q-1}C_2 C_3a^\kappa
                      \lambda(t)^{-(q-2)}\Gamma_{d,q}(\lambda(t)),\\
 C_5&=\max_{t\le T}a_B(t),\qquad
 C_6=2^{q-1}C_2 C_3\lambda(T)^{-(q-2)}
                                      \Gamma_{d,q}(\lambda(T)).
 \end{aligned}
\end{equation}
Since $\lambda$ decreases and $q\ge2$, we have
\begin{equation}\label{eq:ft-cap-coefficients}
 a_B(t)\le C_5,\qquad \ell_a(t)\le C_6a^\kappa,
 \qquad t\le T,
\end{equation}
and $C_5,C_6$ are independent of $a$. We set
\begin{equation}\label{eq:ft-explicit-radius}
 a_T=\min\left\{a_2,
       \left(2R_0TC_6e^{TC_5}\right)^{-1/\kappa}\right\},
 \qquad R_T=2R_0e^{TC_5}.
\end{equation}

\begin{lemma}[Weighted bounds on the prescribed interval]
\label{lem:ft-global-cap}
For every $0<a\le a_T$ and $1\le k\le N$, we have
\begin{equation}
 \sup_{t\le T}\int\widetilde G_{N,k}(t)^q\widetilde W_{N,k}(t)
 \le R_T^k.\label{eq:ft-near-cap}
\end{equation}
The constant $R_T$ is independent of $a,N,k$.
The estimate holds for $\widetilde G_{N,k}^{h,n}$ with the
corresponding regularized weight, uniformly in $h,n$.
\end{lemma}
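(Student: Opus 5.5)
We will prove \eqref{eq:ft-near-cap} for the regularized approximations $\widetilde G_{N,k}^{h,n}$, uniformly in $h,n$. We then pass to the limit $n\to\infty$, $h\downarrow0$ using \Cref{lem:ft-approximation}, together with Fatou's lemma and the pointwise convergence of the weights $\widetilde W^h_{N,k}\to\widetilde W_{N,k}$. At the regularized level, the marginal $H=\widetilde G_{N,k}^{h,n}$ satisfies a BBGKY-type equation with internal generator built from the near-field kernel $\widetilde K^{a,h}$. The far field appears through the bounded drift $b_i^N$. Marginalizing the clipped far-field term gives a flux $\nabla_{v_i}\cdot\mathcal F_i^{\mathrm{far}}$, with $\mathcal F_i^{\mathrm{far}}$ the conditional expectation of $b_i^N$ times $H$, so $|\mathcal F_i^{\mathrm{far}}|\le B_0H$ pointwise. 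The near-field interaction with particles $k+1,\dots,N$ gives the flux $\frac{N-k}{N}J^{a,h}_{i,k}[\widetilde G^{h,n}_{N,k+1}]$. We apply \Cref{lem:common-weighted-flux-energy} with the smooth bounded even potential $\widetilde\phi^{a,h}$ and its energy $\widetilde e^h_{N,k}$, so that $W=\widetilde W^h_{N,k}$. The moment hypotheses of that lemma are supplied by the fixed-$N$ pointwise bounds of \Cref{lem:common-exponential-envelope}.

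Write $\mathcal F_i=\mathcal F_i^{\mathrm{far}}+\frac{N-k}N J_{i,k}^{a,h}[\cdot]$. Then $|\mathcal F_i|^q\le 2^{q-1}(B_0^qH^q+|J_{i,k}|^q)$. The far part contributes $2^{q-1}C_2B_0^q\lambda^{-(q-2)}kY_k$. Together with the terms already in the lemma, this gives the coefficient $a_B(t)k$. For the near part we use \Cref{lem:common-weighted-holder} with $\mathcal E_k=\widetilde e_{N,k}^h$ and $\widehat{\mathcal E}_{k+1}=\widetilde e^h_{N,k+1}$. The required monotonicity holds because $\widetilde\phi^{a,h}\ge0$. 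We also use $\|\widetilde K^{a,h}\|^q_{q,\varpi}\le C_3a^\kappa$, which follows from \eqref{eq:ft-small-near-force}, \eqref{eq:ft-near-kernel-constant} and a regularization that does not increase $|\widetilde K^a|$. Summing over $i$ gives a contribution $k\,\ell_a(t)Y_{k+1}$. Hence, writing $Y_k(t)=\int H^q\widetilde W^h_{N,k}$, we obtain the closed hierarchy
\[
Y_k'\le C_5kY_k+C_6a^\kappa kY_{k+1},\qquad Y_{N+1}\equiv0,
\]
with $Y_k(0)\le R_0^k$. The initial bound follows from \eqref{eq:ft-smooth-initial-envelope}: since $H(0)\le A_2^k e^{-\beta\widetilde e}\prod w$ and $\lambda_0\le(q-1)\beta_4/8$, we get $\int H(0)^q\widetilde W\le A_2^{(q-1)k}\int H(0)\le R_0^k$.

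To close the hierarchy we use a generating-function or maximum-ratio argument. Set $Z(t)=\max_{1\le k\le N}Y_k(t)/\rho(t)^k$ with $\rho(t)=R_0e^{C_5t}\cdot\theta(t)$, where $\theta$ is to be chosen. The factor $k$ on the right is the obstacle, since it turns naive geometric summation into an explosion-type ODE. We handle it by the Kovalevskaya-type bound with a time-dependent base. Put $S(t)=\sum_k Y_k(t)r(t)^{-k}$ with $r(t)$ increasing and $r(0)=2R_0$ (one can take any base above $R_0$; the choice of $2R_0$ is only for convenience). Then
\[
S'\le\sum_k\Bigl(C_5+C_6a^\kappa r-\frac{r'}r\Bigr)kY_kr^{-k}.
\]
The right-hand side is $\le0$ if $r'/r=C_5+C_6a^\kappa r$. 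This Bernoulli ODE has a solution on $[0,T]$ staying below $2R_0e^{TC_5}=R_T$ precisely when $2R_0C_6a^\kappa Te^{TC_5}\le1$, which is the choice \eqref{eq:ft-explicit-radius} of $a_T$. We then get $S(t)\le S(0)\le\sum_k2^{-k}\le1$, hence $Y_k(t)\le r(t)^k\le R_T^k$.

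The main obstacle is the energy step: verifying that the clipped far-field drift really appears only as a bounded flux, with no $a$-dependent derivatives, after marginalization, and that the weighted H\"older lemma applies with the near-field energy alone. This is why the weights use $\widetilde e$ rather than $e$, and why $B_0$ is independent of $a$. Uniformity in $h,n$ then follows because every constant ($C_5,C_6,R_0$) is independent of $a,N,h,n$.
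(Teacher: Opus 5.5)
Your derivation of the level-$k$ inequality is the paper's. You use \Cref{lem:common-weighted-flux-energy} with the near-field energy and $|I_{i,k}^{h,n}|\le B_0\widetilde G_{N,k}^{h,n}$ for the clipped far field. You use \Cref{lem:common-weighted-holder} with $\|\widetilde K^{a,h}\|_{q,\varpi}^q\le C_3a^\kappa$. You use the initial bound $R_0^k$ from \eqref{eq:ft-smooth-initial-envelope}.

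The difference is the closure. The paper observes that $R(t)^k$, with $R'=a_BR+\ell_aR^2$ and $R(0)=R_0$, solves the comparison hierarchy exactly. It then runs a downward induction starting from the closed level $k=N$. You instead sum with a time-dependent base $r(t)$. Your summation step is correct: $S'\le0$ after shifting the index and using $Y_\ell\ge0$.

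Your Bernoulli step, however, is false as stated. The solution with $r(0)=2R_0$ is
\[
 r(t)=\frac{2R_0e^{C_5t}}{1-2R_0C_6a^\kappa(e^{C_5t}-1)/C_5}.
\]
For every $t>0$ and $a>0$ it exceeds $2R_0e^{C_5t}$, so in particular $r(T)>R_T$. With the paper's $a_T$, the denominator is only bounded below by $1-\theta$, where
\[
 \theta=\frac{1-e^{-C_5T}}{C_5T}<1.
\]
What you actually prove is $Y_k\le\bigl(R_T/(1-\theta)\bigr)^k$. Alternatively, $Y_k\le(2R_T)^k$ if you shrink $a_T$ so that $4R_0TC_6a^\kappa e^{TC_5}\le1$. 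Either constant is independent of $a,N,k,h,n$, and that is all the later argument uses (through $C_4$). So the lemma survives with an enlarged $R_T$, but not with the value fixed in \eqref{eq:ft-explicit-radius}.

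This loss is built into your route. Keeping $S(0)$ bounded uniformly in $N$ forces $r(0)>R_0$. The paper's level-by-level comparison can start at $R(0)=R_0$, where the choice of $a_T$ keeps the denominator at least $1/2$ and gives exactly $R\le2R_0e^{TC_5}$. In exchange, your argument treats all levels at once rather than by induction, and it is the same geometric-summation device the paper uses later in \Cref{lem:ft-osgood}.

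Passing to $n\to\infty$ and $h\downarrow0$ by Fatou's lemma at each fixed time is fine for the cap itself. The paper uses uniform convergence of the weighted integrals because it also needs the integrated inequality in the limit.
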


\begin{proof}
Work first at fixed $N,a,h,n$ and put
\[
 \mathcal A_{N,k}^{h,n}(t)
   =\int(\widetilde G_{N,k}^{h,n}(t))^q\widetilde W_{N,k}^h(t),\qquad
 I_{i,k}^{h,n}=\int b_i^N\widetilde G_N^{h,n}\,\dd Z_{k+1:N}.
\]
Positivity gives
\begin{equation}\label{eq:ft-positive-bounded-flux}
 |I_{i,k}^{h,n}|\le B_0\widetilde G_{N,k}^{h,n}.
\end{equation}
Let $\widetilde J_{i,k}^h$ be defined by
\eqref{eq:common-flux-definition} with $K$ replaced by
$\widetilde K^{a,h}$. The flux in the marginal equation with internal
force $\widetilde K^{a,h}$ is
$((N-k)/N)\widetilde J_{i,k}^h[\widetilde G_{N,k+1}^{h,n}]
 +I_{i,k}^{h,n}$. Since
$|\widetilde K^{a,h}|\le|\widetilde K^a|$,
\Cref{lem:common-weighted-holder} yields
\[
 \|\widetilde J_{i,k}^h[\widetilde G_{N,k+1}^{h,n}]
                      \|_{L^q(\widetilde W_{N,k}^h)}^q
 \le C_3a^\kappa\Gamma_{d,q}(\lambda)
                              \mathcal A_{N,k+1}^{h,n}.
\]
Apply \Cref{lem:common-weighted-flux-energy} to the regularized
marginal equation. The pointwise bounds and localization argument in
\Cref{lem:ft-approximation}\ref{item:ft-approximation-limits}
justify this test at fixed $N,a,h,n$.
By \eqref{eq:ft-positive-bounded-flux},
$|u+v|^q\le2^{q-1}(|u|^q+|v|^q)$ and
\eqref{eq:ft-near-coefficients}, we obtain
\begin{equation}\label{eq:ft-smooth-near-energy}
 \begin{aligned}
 (\mathcal A_{N,k}^{h,n})'
 &\le ka_B\mathcal A_{N,k}^{h,n}
     +k\ell_a\left(\frac{N-k}{N}\right)^q
                         \mathcal A_{N,k+1}^{h,n}\\
 &\le ka_B\mathcal A_{N,k}^{h,n}
     +k\ell_a\frac{N-k}{N}\mathcal A_{N,k+1}^{h,n}.
 \end{aligned}
\end{equation}
The last inequality uses $((N-k)/N)^q\le(N-k)/N$.
The terms involving level $k+1$ are
absent at $k=N$. The integrated inequality holds on every
$[s,t]\subset[0,T]$.
Since $\lambda_0<(q-1)\beta_4<(q-1)\beta_0$,
\eqref{eq:ft-smooth-initial-envelope} implies
\[
 \begin{aligned}
 (G_{N,k}^{0,h,n})^{q-1}\widetilde W_{N,k}^h(0)
 &\le A_2^{(q-1)k}
 e^{-((q-1)\beta_0-\lambda_0)\widetilde e_{N,k}^h}
 \prod_{i=1}^k\langle x_i\rangle^{\alpha-(q-1)M_2}\\
 &\le A_2^{(q-1)k},\qquad
 \mathcal A_{N,k}^{h,n}(0)\le R_0^k.
 \end{aligned}
\]

We define
\begin{equation*}
 R(t)=\frac{R_0\exp\!\left(\int_0^t a_B(u)\,\dd u\right)}
 {1-R_0\int_0^t\ell_a(u)
                   \exp\!\left(\int_0^u a_B(v)\,\dd v\right)\,\dd u}.
\end{equation*}
For $0<a\le a_T$, it follows from \eqref{eq:ft-explicit-radius} that
\begin{equation*}
 R_0\int_0^T\ell_a(u)
                   \exp\!\left(\int_0^u a_B(v)\,\dd v\right)\,\dd u
 \le R_0TC_6a^\kappa e^{TC_5}
 \le R_0TC_6a_T^\kappa e^{TC_5}\le\tfrac12.
\end{equation*}
Hence $R'=a_BR+\ell_aR^2$ and $R\le R_T$ on $[0,T]$.
At the terminal level, we deduce from \eqref{eq:ft-smooth-near-energy} that
\begin{equation*}
 \mathcal A_{N,N}^{h,n}(t)
 \le R_0^N\exp\!\left(N\int_0^t a_B(u)\,\dd u\right)
 \le R(t)^N.
\end{equation*}
If $\mathcal A_{N,k+1}^{h,n}\le R^{k+1}$, then
\eqref{eq:ft-smooth-near-energy} and
$(R^k)'=ka_BR^k+k\ell_aR^{k+1}$ imply
\[
 \frac{\dd}{\dd t}\left[
 e^{-k\int_0^t a_B(u)\,\dd u}
       (\mathcal A_{N,k}^{h,n}-R^k)\right]
 \le-\frac{k^2}{N}\ell_aR^{k+1}
               e^{-k\int_0^t a_B(u)\,\dd u}\le0.
\]
Integration yields
\begin{equation*}
 \mathcal A_{N,k}^{h,n}(t)-R(t)^k
 \le\exp\!\left(k\int_0^t a_B(u)\,\dd u\right)
       \bigl(\mathcal A_{N,k}^{h,n}(0)-R_0^k\bigr)\le0.
\end{equation*}
Downward induction proves \eqref{eq:ft-near-cap} uniformly in $h,n$.

At fixed $N,a$, apply
\Cref{lem:ft-approximation}\ref{item:ft-approximation-weights}--\ref{item:ft-approximation-limits}
to the preceding inequalities. The weighted integrals converge
uniformly on $[0,T]$, so \eqref{eq:ft-near-cap} and the integrated
form of \eqref{eq:ft-smooth-near-energy} pass to the limits
$n\to\infty$ and then $h\downarrow0$, on every
$[s,t]\subset[0,T]$.
\end{proof}

For the rest of this section, let $0<a\le a_T$ and choose a common
constant $A_T$ in \eqref{eq:TV-U-bound} and
\eqref{eq:ft-approximation-envelopes}, independent of $a,N,k,h,n$.
Using the unit masses, \eqref{eq:ft-exponent-margin} and
\eqref{eq:ft-full-near-energy}, we obtain
\begin{equation}\label{eq:ft-U-cap}
 \begin{aligned}
 \sup_{t\le T}\int U_{N,k}(t)^q\widetilde W_{N,k}(t)
 &\le\sup_{t\le T}\int U_{N,k}(t)^qW_{N,k}(t)
 \le A_T^{(q-1)k},\\
 \sup_{t\le T}\int(U_{N,k}^{h,n}(t))^q\widetilde W_{N,k}^h(t)
 &\le\sup_{t\le T}\int(U_{N,k}^{h,n}(t))^qW_{N,k}^h(t)
 \le A_T^{(q-1)k}.
 \end{aligned}
\end{equation}
In the remainder and error estimates below, dependence on
$\overline K^a$ is displayed through $M_a$; the remaining constants
are independent of $a,N,k,h,n$.

\subsection{Estimates for the auxiliary remainders}

For $j=1,2$ and $1\le k\le N-j$, define
\begin{equation}\label{eq:ft-remainder-definition}
 h_{N,k,j}=U_{N,k+j}-U_{N,k}f_t^{\otimes j}.
\end{equation}
Write $h_{N,k,j}^{h,n}$ for the corresponding regularized remainder.
We apply the interpolation argument of
\cite[Proposition~3.5]{JiangWuShortTime} at both values of $j$.

\begin{lemma}
\label{lem:ft-two-label-residual}
For $j=1,2$ and $1\le k\le N-j$, there is $D_T\ge1$, independent of
$a,N,k,h,n$, such that
\begin{equation*}
 \begin{aligned}
 \sup_{t\le T}\int|h_{N,k,j}|^q
                       e^{\lambda(t)\left(e_{N,k}+\sum_{\ell=k+1}^{k+j}(1+|v_\ell|^2)\right)}\varpi_{k+j}
       &\le D_T^k\varepsilon_N,\\
 \sup_{t\le T}\int|h_{N,k,j}^{h,n}|^q
                       e^{\lambda(t)\left(e_{N,k}^h+\sum_{\ell=k+1}^{k+j}(1+|v_\ell|^2)\right)}\varpi_{k+j}
       &\le D_T^k(\varepsilon_N+\xi_{n,h}).
 \end{aligned}
\end{equation*}
\end{lemma}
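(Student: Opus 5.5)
The plan is to interpolate between a crude weighted bound and a sharp unweighted bound, as in \cite[Proposition~3.5]{JiangWuShortTime}. Write $\widehat W_{k,j}=e^{\lambda(t)(e_{N,k}+\sum_{\ell=k+1}^{k+j}(1+|v_\ell|^2))}\varpi_{k+j}$.

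\textbf{Step 1: weighted $L^\infty$ envelope.} By \eqref{eq:TV-U-bound}, $U_{N,k+j}\le A_T^{k+j}e^{-\beta_Te_{N,k+j}}\prod w_{M_2}(x_\ell)$. Since $e_{N,k+j}\ge e_{N,k}+\sum_{\ell>k}(1+|v_\ell|^2)$ by \eqref{eq:common-energy-identities}, the envelope for $U_{N,k+j}$ is dominated by $A_T^{k+j}e^{-\beta_T\widehat e}\prod w_{M_2}$, where $\widehat e$ denotes the exponent appearing in $\widehat W_{k,j}$. The product $U_{N,k}f_t^{\otimes j}$ obeys the same envelope with $\beta_4$ in place of $\beta_T$, using \eqref{eq:common-derived-f-envelope}. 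Hence
\[
 |h_{N,k,j}|\le 2C^kE,\qquad E:=e^{-\beta_4\widehat e}\prod_{\ell\le k+j}w_{M_2}(x_\ell).
\]

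\textbf{Step 2: interpolation.} Write $|h|^q\widehat W\le |h|\cdot(2C^kE)^{q-1}\widehat W$. By \eqref{eq:ft-exponent-margin}, $\lambda\le\lambda_0<(q-1)\beta_4/4$. Together with $\alpha<(q-1)M_2$, this gives
\[
 E^{q-1}\widehat W\le e^{-(q-1)\beta_4\widehat e/2}\le 1.
\]
Consequently
\[
 \int|h_{N,k,j}|^q\widehat W_{k,j}\le C^k\|h_{N,k,j}\|_1,
\]
and it remains to bound $\|h_{N,k,j}\|_1$ by $C^k/N$.

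\textbf{Step 3: the $L^1$ bound.} Insert $f_t^{\otimes(k+j)}$ and use the triangle inequality:
\[
 \|h_{N,k,j}\|_1\le\|U_{N,k+j}-f_t^{\otimes(k+j)}\|_1+\|U_{N,k}-f_t^{\otimes k}\|_1,
\]
since $f_t^{\otimes j}$ has unit mass. Both terms are at most $C_T^{k+j}\varepsilon_N$ by \Cref{lem:TV-U-product}. Because $j\le2$, these constants are absorbed into $D_T^k$.

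\textbf{Regularized version.} The same argument applies with \eqref{eq:ft-approximation-envelopes} in place of \eqref{eq:TV-U-bound}, and with $e_{N,k}^h$ in place of $e_{N,k}$. The $L^1$ comparison of $U^{h,n}_{N,k}$ with $f_t^{\otimes k}$ is the regularized form of \Cref{lem:TV-U-product}. This gives an initial error $\|G^{0,h,n}_{N,k}-f_0^{\otimes k}\|_1\le\xi_{n,h}+C^k\varepsilon_N$ and a kernel error $k^2h^{d-s-1}/N\le C^k\varepsilon_N$.

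\textbf{Main obstacle.} The delicate point is the regularized case. There one must check that the $L^1$ stability estimate, including the auxiliary field $(N-k)E_f/N$ and the split kernel $\widetilde K^{a,h}+\overline K^a$, holds uniformly in $a$. I expect this to follow because $\|K^h-K\|_1\le\|\widetilde K^{a,h}-\widetilde K^a\|_1\le Ch^{d-s-1}$, which is independent of $a$, and because the contraction of the prescribed-field equation in $L^1$ does not depend on the internal force.
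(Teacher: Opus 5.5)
Your proposal is correct and follows the paper's proof essentially step for step: you bound $\|h_{N,k,j}\|_1$ by the triangle inequality and \Cref{lem:TV-U-product}, dominate $U_{N,k+j}$ and $U_{N,k}f_t^{\otimes j}$ by a common envelope with exponent $\beta_4$ using \eqref{eq:TV-U-bound}, \eqref{eq:ft-approximation-envelopes} and \eqref{eq:common-derived-f-envelope}, and then interpolate using \eqref{eq:ft-exponent-margin} and $\alpha<(q-1)M_2$. In the regularized case the paper likewise checks the kernel bounds of the cited comparison lemma uniformly in $a,h$ via \eqref{eq:ft-core-kernel-difference}--\eqref{eq:ft-core-regularization-bounds}; this includes uniform $L^{q'}$-type control of $K^h$, not only the $L^1$ difference you mention. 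The paper absorbs $\xi_{n,h}$ by Markov contraction, which is equivalent to your treatment of it as part of the initial error.
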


\begin{proof}
By \Cref{lem:TV-U-product}, we have
\[
 \|h_{N,k,j}\|_1
 \le \|U_{N,k+j}-f_t^{\otimes(k+j)}\|_1
       +\|U_{N,k}-f_t^{\otimes k}\|_1
 \le C_T^{k+j}\varepsilon_N.
\]
For the regularized equations,
\eqref{eq:ft-core-kernel-difference}--\eqref{eq:ft-core-regularization-bounds}
give the kernel bounds used in
\cite[proof of Lemma~3.4]{JiangWuShortTime}, uniformly in $a,h$.
Thus \eqref{eq:TV-U-product-regularized-sharp} holds with $K^h$
in place of $K_\varepsilon$. Markov contraction,
\eqref{eq:conditioned-initial-error} and $h<1$ yield
\begin{equation}\label{eq:ft-regularized-U-product}
 \sup_{t\le T}\|U_{N,m}^{h,n}-f_t^{\otimes m}\|_1
 \le C_T^m(\varepsilon_N+\xi_{n,h}).
\end{equation}
The preceding triangle inequality then gives
\[
 \|h_{N,k,j}^{h,n}\|_1\le C_T^{k+j}(\varepsilon_N+\xi_{n,h}).
\]
By \eqref{eq:TV-U-bound}, \eqref{eq:ft-approximation-envelopes}
and \eqref{eq:common-derived-f-envelope}, we obtain
\[
 |h_{N,k,j}|
 \le U_{N,k+j}+U_{N,k}f_t^{\otimes j}
 \le C_T^{k+j}e^{-\beta_4\left(e_{N,k}+\sum_{\ell=k+1}^{k+j}(1+|v_\ell|^2)\right)}
\prod_{\ell=1}^{k+j}\langle x_\ell\rangle^{-M_2}.
\]
The same bound holds for $h_{N,k,j}^{h,n}$ with energy $e_{N,k}^h$.
By \eqref{eq:ft-exponent-margin} and
\eqref{eq:common-weight-compatibility}, we have
\begin{align*}
 \int |h_{N,k,j}|^q
 e^{\lambda\left(e_{N,k}+\sum_{\ell=k+1}^{k+j}(1+|v_\ell|^2)\right)}\varpi_{k+j}
 &\le C_T^{(q-1)(k+j)}\|h_{N,k,j}\|_1
 \le D_T^k\varepsilon_N.
\end{align*}
The same interpolation with $e_{N,k}^h$ and $h_{N,k,j}^{h,n}$,
using \eqref{eq:ft-regularized-U-product}, proves the second estimate.
\end{proof}

For a bounded kernel $\Psi$ on $k+j$ variables, write
$T_\Psi h(Z_k)=\int\Psi(Z_{k+j})h(Z_{k+j})\,\dd Z_{k+1:k+j}$.
Applying the weighted H\"older inequality in $Z_{k+1:k+j}$,
integrating in $Z_k$ and using $\lambda(t)\ge\lambda(T)$, we obtain
\begin{equation}\label{eq:ft-bounded-projection}
 \|T_\Psi h\|_{L^q(\widetilde W_{N,k})}^q
 \le C_{T,j}\|\Psi\|_\infty^q
             \int|h|^qe^{\lambda(t)\left(\widetilde e_{N,k}+\sum_{\ell=k+1}^{k+j}(1+|v_\ell|^2)\right)}\varpi_{k+j},
 \qquad j=1,2,
\end{equation}
where
\[
 C_{T,j}=\left(\int_{\Omega_{\mathcal D}}
 e^{-\lambda(T)(1+|v|^2)/(q-1)}\varpi(x)^{-1/(q-1)}\,\dd x\dd v
             \right)^{j(q-1)}<\infty.
\]
Finiteness follows from \eqref{eq:common-weight-compatibility}.
Positivity of the potential also implies
\[
 \widetilde e_{N,k+j}\ge \widetilde e_{N,k}+\sum_{\ell=k+1}^{k+j}(1+|v_\ell|^2),\qquad
 \int|h|^qe^{\lambda \left(\widetilde e_{N,k}+\sum_{\ell=k+1}^{k+j}(1+|v_\ell|^2)\right)}\varpi_{k+j}
 \le\|h\|_{L^q(\widetilde W_{N,k+j})}^q.
\]
The same estimates hold for $\widetilde e^h,\widetilde W^h$.
Since $\widetilde e_{N,k}\le e_{N,k}$ and
$\widetilde e_{N,k}^h\le e_{N,k}^h$, the bounds in
\Cref{lem:ft-two-label-residual} also hold with the corresponding
near-field energies on their left-hand sides.

\subsection{A centered second-moment estimate}

We estimate the flux produced by the bounded modification through
a centered second moment. Define
\begin{equation*}
 \mathcal E_i^a(X_N)=\frac1N\sum_{j\ne i}\overline K^a(x_i-x_j),\qquad
 D_i=\Theta_B(\mathcal E_i^a)-\mathcal E_i^a,\qquad
 J_{i,k}^{D}=\int D_i\widetilde G_N\,\dd Z_{k+1:N}.
\end{equation*}
At $k=N$, $J_{i,N}^D=D_i\widetilde G_N$. We set
\begin{equation}\label{eq:ft-error-definition}
 H_{N,k}=\widetilde G_{N,k}-U_{N,k},\qquad
 Y_{N,k}(t)=\int|H_{N,k}(t)|^q\widetilde W_{N,k}(t).
\end{equation}
By \eqref{eq:TV-level-N-equality},
$H_{N,N}=\widetilde G_N-G_N$.
Use the same definitions for $H_{N,k}^{h,n},Y_{N,k}^{h,n}$ and
$J_{i,k}^{D,h,n}$, using $\widetilde W_{N,k}^h$ in $Y_{N,k}^{h,n}$.
From \eqref{eq:ft-near-cap} and \eqref{eq:ft-U-cap}, we obtain
\begin{equation}\label{eq:ft-error-cap}
 \begin{gathered}
 0\le Y_{N,k}(t),Y_{N,k}^{h,n}(t)
 \le2^{q-1}\bigl(R_T^k+A_T^{(q-1)k}\bigr)\le C_4^k,
 \qquad t\le T,\\
 C_4=2^q\max\{1,R_T,A_T^{q-1}\}.
 \end{gathered}
\end{equation}
The constant $C_4$ is independent of $a,N,k,h,n$.

\begin{lemma}
\label{lem:ft-clipping-flux}
There are $C_T,D_T\ge1$, independent of $a,N,k,h,n$, such that
\begin{equation}\label{eq:ft-clipping-flux-bound}
 \begin{aligned}
 \sum_{i=1}^k\|J_{i,k}^D\|_{L^q(\widetilde W_{N,k})}^q
 &\le C_T(1+M_a)^{2q}
                \bigl(kY_{N,k+2}+D_T^k\varepsilon_N\bigr),\\
 \sum_{i=1}^k\|J_{i,k}^{D,h,n}\|_{L^q(\widetilde W_{N,k}^h)}^q
 &\le C_T(1+M_a)^{2q}
       \bigl(kY_{N,k+2}^{h,n}+D_T^k(\varepsilon_N+\xi_{n,h})\bigr).
 \end{aligned}
\end{equation}
These estimates hold for $1\le k\le N$, with
$Y_{N,N+1}=Y_{N,N+2}=0$ and the same convention for the regularized
errors.
\end{lemma}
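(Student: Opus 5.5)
The plan is a pointwise Chebyshev-type bound on $D_i$ followed by a centered expansion of the resulting second moment. The key observation comes from the clipping margin \eqref{eq:ft-clipping-margin}. We have $D_i\ne0$ only if $|\mathcal E_i^a|>B$, and $|m_f^a(x_i)|\le B-1$. So on $\{D_i\ne0\}$ the deviation $\Delta_i:=\mathcal E_i^a-m_f^a(x_i)$ satisfies $|\Delta_i|\ge1$. Since $|D_i|\le B_0+M_a$, this gives the pointwise bound
\[
 |D_i|\le (B_0+M_a)\,\ind_{\{|\Delta_i|\ge1\}}\le C(1+M_a)|\Delta_i|^2 .
\]
Because $\widetilde G_N\ge0$, it follows that $|J_{i,k}^D|\le C(1+M_a)\int|\Delta_i|^2\widetilde G_N\,\dd Z_{k+1:N}$. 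Everything is done first at the regularized level $h,n$; the identical computation, with $\xi_{n,h}$ carried along through \Cref{lem:ft-two-label-residual}, gives the second line of \eqref{eq:ft-clipping-flux-bound}.

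The boundary levels $k\in\{N-1,N\}$ are handled crudely. There $|J_{i,k}^D|\le (B_0+M_a)\widetilde G_{N,k}$, so \eqref{eq:ft-near-cap} gives
\[
 \sum_i\|J^D_{i,k}\|^q_{L^q(\widetilde W_{N,k})}\le k(B_0+M_a)^qR_T^k .
\]
Since $N\le k+1$, we have $1\le (k+1)\varepsilon_N$, and this is absorbed into $D_T^k\varepsilon_N$.

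For $k\le N-2$, write $\psi(x,y)=\overline K^a(x-y)-m_f^a(x)$. This kernel is bounded by $2M_a$ and is centered: $\int\psi(x,y)\rho_f(t,y)\,\dd y=0$. Then
\[
 \Delta_i=\frac1N\sum_{j\ne i}\psi(x_i,x_j)-\frac1Nm_f^a(x_i).
\]
Expanding $|\Delta_i|^2$ and integrating out $Z_{k+2:N}$ by exchangeability produces three kinds of terms.
\begin{enumerate}[label=\textup{(\alph*)}]
\item The diagonal terms $j=l$ and the terms with $j$ or $l$ among the first $k$ labels carry a prefactor $O(M_a^2k/N)$. They involve $\widetilde G_{N,k}$, or $\widetilde G_{N,k+1}$ tested against a bounded kernel. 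By \eqref{eq:ft-bounded-projection}, positivity of $\widetilde\phi^a$ and \eqref{eq:ft-near-cap}, their $q$-th weighted norms are at most $C(1+M_a)^{2q}(k/N)^qR_T^{k+1}\le C(1+M_a)^{2q}D_T^k\varepsilon_N$, using $(k/N)^q\le k/N$ and absorbing $k$ into $D_T^k$.
\item The genuine off-diagonal term is
\[
 \frac{(N-k)(N-k-1)}{N^2}\int\psi(x_i,x_{k+1})\psi(x_i,x_{k+2})\,\widetilde G_{N,k+2}\,\dd Z_{k+1:k+2}.
\]
Here I decompose $\widetilde G_{N,k+2}=H_{N,k+2}+h_{N,k,2}+U_{N,k}f_t^{\otimes2}$. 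The last piece integrates to zero by the centering of $\psi$.
\item The remaining two pieces are treated with \eqref{eq:ft-bounded-projection} for $j=2$ and $\|\Psi\|_\infty\le4M_a^2$. The $H$-piece gives $C(1+M_a)^{2q}Y_{N,k+2}$ by the subsequent remark comparing with $\widetilde W_{N,k+2}$. The $h$-piece gives $C(1+M_a)^{2q}D_T^k\varepsilon_N$ by \Cref{lem:ft-two-label-residual}, with the near-field energy version.
\end{enumerate}

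Raising the bound on $J$ to the $q$-th power, the prefactor is $(C(1+M_a))^q$. The bounded kernels contribute $M_a^{2q}$, but only via $(1+M_a)^{q}\cdot(1+M_a)^{q}$ after using $M_a^2\le(1+M_a)^2$, which gives the stated $(1+M_a)^{2q}$. Summing over $i\le k$ produces the factor $k$ in front of $Y_{N,k+2}$, while the other factors of $k$ are absorbed into $D_T^k$.

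The main obstacle I expect is the bookkeeping that keeps the constant $C_T$ free of $a$. All $a$-dependence must go through $M_a$, and the weights must be the near-field ones $\widetilde W$. This requires using $\widetilde e_{N,k}\le e_{N,k}$ for the residual bounds, together with the projection estimate \eqref{eq:ft-bounded-projection}, whose constant depends only on $\lambda(T)$ and $\varpi$.
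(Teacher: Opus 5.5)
Your route is essentially the paper's. You dominate $|J^D_{i,k}|$ by the centered second moment $\int|\Delta_i|^2\widetilde G_N\,\dd Z_{k+1:N}$ and expand it by exchangeability. You then kill the genuine off-diagonal term via $\widetilde G_{N,k+2}=H_{N,k+2}+h_{N,k,2}+U_{N,k}f_t^{\otimes2}$ and the centering of $\psi$. Your crude treatment of $k\in\{N-1,N\}$ through $N\le k+1$ is a legitimate shortcut.

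The gap is in the $M_a$ bookkeeping, which is the actual content of the lemma. Your pointwise bound $|D_i|\le(B_0+M_a)\ind_{\{|\Delta_i|\ge1\}}\le C(1+M_a)|\Delta_i|^2$ already costs one factor $1+M_a$. Every term in the expansion of the second moment carries a further $M_a^2$, from $|\psi|\le2M_a$. After raising to the $q$-th power you therefore get $(1+M_a)^q\cdot M_a^{2q}\lesssim(1+M_a)^{3q}$, not $(1+M_a)^{2q}$. The claim in your last paragraph that these factors combine to $(1+M_a)^{2q}$ is an arithmetic slip. Since $M_a\sim a^{-s-1}$ is unbounded as $a\downarrow0$ and $C_T$ must be independent of $a$, the extra $(1+M_a)^q$ cannot be absorbed. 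As written, you prove only the weaker estimate with exponent $3q$. That would still suffice downstream after enlarging $c_T$, but it is not the stated lemma.

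The repair is to avoid the crude bound $|\mathcal E_i^a|\le M_a$ on the bad set. By \eqref{eq:ft-clipping-margin} we have $\Theta_B(m_f^a)=m_f^a$, so one may write $D_i=(\Theta_B-\mathrm{id})(\mathcal E_i^a)-(\Theta_B-\mathrm{id})(m_f^a(t,x_i))$. This gives $|D_i|\le(1+\operatorname{Lip}\Theta_B)|\Delta_i|\ind_{\{|\Delta_i|>1\}}\le(1+\operatorname{Lip}\Theta_B)|\Delta_i|^2$, which is what the paper does. Equivalently, use $|\mathcal E_i^a|\le B-1+|\Delta_i|$ to get $|D_i|\le(B_0+B)|\Delta_i|^2$. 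Either way the constant in front of the second moment is independent of $a$, and all $M_a$-dependence comes from the bounded kernels in its expansion. The rest of your argument then yields exactly $C_T(1+M_a)^{2q}\bigl(kY_{N,k+2}+D_T^k\varepsilon_N\bigr)$, and likewise at the regularized level.
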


\begin{proof}
By \eqref{eq:ft-clipping-margin}, $\Theta_B(m_f^a)=m_f^a$.
On $\{D_i\ne0\}$, we have
\[
 |\mathcal E_i^a-m_f^a(t,x_i)|
 \ge|\mathcal E_i^a|-|m_f^a(t,x_i)|
 >B-(B-1)=1.
\]
Hence
\begin{equation*}
 \begin{aligned}
 |D_i|
 &=|\Theta_B(\mathcal E_i^a)-\Theta_B(m_f^a(t,x_i))
                       -(\mathcal E_i^a-m_f^a(t,x_i))|\\
 &\le(1+\operatorname{Lip}\Theta_B)
                         |\mathcal E_i^a-m_f^a(t,x_i)|
                         \ind_{\{D_i\ne0\}}\\
 &\le(1+\operatorname{Lip}\Theta_B)
                         |\mathcal E_i^a-m_f^a(t,x_i)|^2.
 \end{aligned}
\end{equation*}
Positivity gives
\begin{equation}\label{eq:ft-variance-domination}
 |J_{i,k}^D|\le(1+\operatorname{Lip}\Theta_B)V_{i,k},\qquad
 V_{i,k}=\int|\mathcal E_i^a-m_f^a(t,x_i)|^2
                       \widetilde G_N\,\dd Z_{k+1:N}.
\end{equation}
Fix $t$, $Z_k$ and $i\le k$, put $m=N-k$, and write
\[
 \xi_i(z)=\overline K^a(x_i-x)-m_f^a(t,x_i),\qquad
 A_{i,k}=\sum_{\substack{j\le k\\j\ne i}}\overline K^a(x_i-x_j)
                                                     -km_f^a(t,x_i).
\]
Since $\rho_f$ has unit mass, $|m_f^a|\le M_a$,
$|\xi_i|\le2M_a$ and $|A_{i,k}|\le2kM_a$. The centering identities are
\begin{equation}\label{eq:ft-centering}
 \int\xi_i(z)f_t(z)\,\dd z=0,\qquad
 N(\mathcal E_i^a-m_f^a(t,x_i))=A_{i,k}+\sum_{j>k}\xi_i(z_j).
\end{equation}
Expanding the square and using exchangeability, we obtain
\begin{align}\label{eq:ft-exact-variance}
 V_{i,k}
 &=\frac{|A_{i,k}|^2}{N^2}\widetilde G_{N,k}
   +\frac{2m}{N^2}A_{i,k}\cdot T_{\xi_i}\widetilde G_{N,k+1}\notag\\
 &\quad+\frac m{N^2}T_{|\xi_i|^2}\widetilde G_{N,k+1}
   +\frac{m(m-1)}{N^2}T_{\xi_i\cdot\xi_i}\widetilde G_{N,k+2}.
\end{align}
Here the last kernel is
$\xi_i(z_{k+1})\cdot\xi_i(z_{k+2})$; its coefficient counts ordered
pairs of distinct indices greater than $k$. Terms with an index
above $N$ are omitted.
For $k<N$, positivity and marginal consistency imply
\[
 |T_{\xi_i}\widetilde G_{N,k+1}|\le2M_a\widetilde G_{N,k},\qquad
 0\le T_{|\xi_i|^2}\widetilde G_{N,k+1}
                            \le4M_a^2\widetilde G_{N,k}.
\]
By \eqref{eq:ft-near-cap}, the $q$-th powers of the
$L^q(\widetilde W_{N,k})$ norms of the first three terms in
\eqref{eq:ft-exact-variance} are bounded, respectively, by
\[
 C_qM_a^{2q}R_T^k(k/N)^{2q},\qquad
 C_qM_a^{2q}R_T^k(mk/N^2)^q,\qquad
 C_qM_a^{2q}R_T^k(m/N^2)^q.
\]
For $k\le N-2$, it follows that
\begin{align}\label{eq:ft-small-variance-terms}
 \left\|V_{i,k}-\frac{m(m-1)}{N^2}
 T_{\xi_i\cdot\xi_i}\widetilde G_{N,k+2}
                         \right\|_{L^q(\widetilde W_{N,k})}^q\le C_qM_a^{2q}R_T^k
\left[(k/N)^{2q}+(k/N)^q+N^{-q}\right].
\end{align}
The polynomial factors satisfy
\begin{equation}\label{eq:ft-variance-polynomial-bound}
 kC_qR_T^k\left[(k/N)^{2q}+(k/N)^q+N^{-q}\right]
 \le\frac{C_qR_T^k(k^{2q+1}+k^{q+1}+k)}N
 \le\frac{D_T^k}N,
\end{equation}
where $D_T$ is independent of $a$.
For $k+2\le N$, we decompose
\[
 \widetilde G_{N,k+2}=H_{N,k+2}+h_{N,k,2}+U_{N,k}f_t^{\otimes2}.
\]
By the centering identity \eqref{eq:ft-centering}, we have
\[
 T_{\xi_i\cdot\xi_i}(U_{N,k}f_t^{\otimes2})
          =U_{N,k}\left|\int\xi_i f_t\right|^2=0.
\]
By \eqref{eq:ft-bounded-projection},
$\|\xi_i(z_{k+1})\cdot\xi_i(z_{k+2})\|_\infty\le4M_a^2$,
and \Cref{lem:ft-two-label-residual}, we obtain
\[
 \begin{aligned}
 \|T_{\xi_i\cdot\xi_i}\widetilde G_{N,k+2}\|_{L^q(\widetilde W_{N,k})}^q
 &=\|T_{\xi_i\cdot\xi_i}(H_{N,k+2}+h_{N,k,2})
                                      \|_{L^q(\widetilde W_{N,k})}^q\\
 &\le C_TM_a^{2q}\bigl(Y_{N,k+2}+D_T^k\varepsilon_N\bigr).
 \end{aligned}
\]
Sum over $i$ and apply
\eqref{eq:ft-variance-domination},
\eqref{eq:ft-small-variance-terms} and
\eqref{eq:ft-variance-polynomial-bound}.
Increasing the exponential base $D_T$ to absorb $kD_T^k$ proves
\eqref{eq:ft-clipping-flux-bound} for $k\le N-2$.
If $k=N-1$ or $k=N$, the last term of
\eqref{eq:ft-exact-variance} is absent. Equations
\eqref{eq:ft-variance-polynomial-bound} and
$N^{-1}\le\varepsilon_N$ give
\begin{equation*}
 \sum_{i=1}^k\|V_{i,k}\|_{L^q(\widetilde W_{N,k})}^q
 \le\frac{M_a^{2q}D_T^k}N
 \le(1+M_a)^{2q}D_T^k\varepsilon_N.
\end{equation*}
This proves the terminal cases.

The identities \eqref{eq:ft-centering}--\eqref{eq:ft-exact-variance}
also hold under $\widetilde G_N^{h,n}$, since $\overline K^a$,
$m_f^a$ and $D_i$ are unchanged. Use the regularized bounds in
\Cref{lem:ft-global-cap,lem:ft-two-label-residual} in the preceding
estimates to obtain \eqref{eq:ft-clipping-flux-bound} with
$\widetilde W_{N,k}^h$ and $\varepsilon_N+\xi_{n,h}$.
All constants apart from the displayed powers of $M_a$ are
independent of $a$.
\end{proof}

\subsection{The error hierarchy}

\begin{lemma}
\label{lem:ft-error-hierarchy}
There are $C_T,D_T\ge1$, independent of $a,N,k$, such that
\begin{equation}\label{eq:ft-error-hierarchy}
 \begin{aligned}
 Y_{N,k}(t)-Y_{N,k}(s)
 \le C_T\int_s^t\Bigl[&k\bigl((1+M_a)^qY_{N,k}
             +Y_{N,k+1}+(1+M_a)^{2q}Y_{N,k+2}\bigr)\\
             &+(1+M_a)^{2q}D_T^k\varepsilon_N\Bigr](u)\,\dd u,
 \qquad Y_{N,k}(0)=0,
 \end{aligned}
\end{equation}
for all $0\le s\le t\le T$ and $1\le k\le N$, with
$Y_{N,N+1}=Y_{N,N+2}=0$.
\end{lemma}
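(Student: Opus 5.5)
We will work at fixed $N,a,h,n$ with the regularized quantities $H_{N,k}^{h,n}=\widetilde G_{N,k}^{h,n}-U_{N,k}^{h,n}$ and $Y_{N,k}^{h,n}$. We derive the integrated inequality there, with $\varepsilon_N+\xi_{n,h}$ in place of $\varepsilon_N$, and then pass to the limit $n\to\infty$, $h\downarrow0$ using \Cref{lem:ft-approximation}. By \eqref{eq:ft-error-cap} all terms are bounded uniformly, and $\xi_{n,h}\to0$.

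\textbf{Step 1: the equation for $H_{N,k}$.} The first task is an equation for $H_{N,k}$ of the form required by \Cref{lem:common-weighted-flux-energy}. Write the internal force as $K^h=\widetilde K^{a,h}+\overline K^a$ and the modified drift as $\widetilde{\mathcal E}_i+\Theta_B(\mathcal E_i^a)=K^h$-drift$\,+\,D_i$. Then the marginal $\widetilde G_{N,k}$ satisfies the full regularized BBGKY hierarchy with generator $L^h_{N,k}$, plus two fluxes:
\[
\frac{N-k}N\bigl(\widetilde J^h_{i,k}[\widetilde G_{N,k+1}]+\overline J_{i,k}[\widetilde G_{N,k+1}]\bigr)
\quad\text{and}\quad J^D_{i,k}.
\]
Subtracting the equation \eqref{eq:TV-U-PDE} for $U_{N,k}$, whose flux is $\frac{N-k}N E_f(x_i)U_{N,k}$, and using $E_f=\widetilde K^a*\rho_f+m_f^a$, we obtain $\partial_tH+L^h_{N,k}H-\sigma\Delta_{V_k}H+\sum_i\nabla_{v_i}\cdot\mathcal F_i=0$. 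Here $\mathcal F_i$ is $\frac{N-k}N$ times
\[
\widetilde J^h_{i,k}[H_{N,k+1}]+\widetilde J^h_{i,k}[h_{N,k,1}]+T_{\overline K^a(x_i-x)-m_f^a(x_i)}(H_{N,k+1}+h_{N,k,1})+(\widetilde K^{a,h}-\widetilde K^a)*\rho_f\,U_{N,k},
\]
plus $J^D_{i,k}$. This uses $\widetilde G_{N,k+1}=H_{N,k+1}+h_{N,k,1}+U_{N,k}f_t$, so that the product part reproduces $E_f U_{N,k}$ up to regularization.

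One issue is that \Cref{lem:common-weighted-flux-energy} is stated with the full energy weight $W$. Here the weight is $\widetilde W$, built from the near-field energy $\widetilde e$, which is conserved only by the near-field generator. The plan is to treat the far-field internal force $\overline K^a$ inside $L^h_{N,k}$ as an extra bounded drift. With respect to $\widetilde W$ it contributes $2\lambda v_i\cdot\frac1N\sum_j\overline K^a\,|H|^q\widetilde W$, which is absorbed by the $\lambda^2|V_k|^2$ dissipation in \eqref{eq:common-flux-dissipation} at the cost of $C\lambda^{-2}M_a^2 kY_{N,k}$. Equivalently, write it as a divergence flux $\frac1N\sum_j\overline K^a H$ bounded by $M_a|H|$. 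Either way the contribution is at most $C_T(1+M_a)^q kY_{N,k}$, since $\lambda\ge\lambda(T)$. The same absorption handles the $E_f$-drift mismatch between the two equations.

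\textbf{Step 2: bounding the fluxes.} Each flux is then estimated in $L^q(\widetilde W_{N,k})$.
\begin{itemize}
\item For $\widetilde J^h[H_{N,k+1}]$, apply \Cref{lem:common-weighted-holder} with \eqref{eq:ft-small-near-force}; the constant is $C_3a^\kappa\Gamma\le C_T$, giving $C_TY_{N,k+1}$.
\item For $\widetilde J^h[h_{N,k,1}]$, \Cref{lem:common-weighted-holder} with the energy $\widetilde e_{N,k}+1+|v_{k+1}|^2$ and \Cref{lem:ft-two-label-residual} give $D_T^k\varepsilon_N$.
\item For the bounded far-field projections, use \eqref{eq:ft-bounded-projection} with $\|\Psi\|_\infty\le2M_a$. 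This gives $C_TM_a^q(Y_{N,k+1}+D_T^k\varepsilon_N)$. The $Y_{N,k+1}$ coefficient would then carry $M_a^q$ rather than $1$; since $(1+M_a)^q\le(1+M_a)^{2q}$ this is harmless for the downstream use, or it can be absorbed by enlarging $C_T$ once $a=a_T$ is fixed.
\item The regularization term $(\widetilde K^{a,h}-\widetilde K^a)*\rho_f$ vanishes as $h\downarrow0$ by \eqref{eq:kernel-assumptions}, together with \eqref{eq:ft-U-cap}.
\item $J^D_{i,k}$ is controlled by \Cref{lem:ft-clipping-flux}.
\end{itemize}
Summing over $i\le k$ gives a factor $k$ on the $Y$ terms, and $kD_T^k$ is absorbed into a larger base. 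Since $\lambda^{-(q-2)}\le\lambda(T)^{-(q-2)}$, inserting these bounds into \Cref{lem:common-weighted-flux-energy} gives \eqref{eq:ft-error-hierarchy} in distributional form. Integrating over $[s,t]$ gives the stated inequality. Finally, $Y_{N,k}(0)=0$ because $\widetilde G_{N,k}(0)=U_{N,k}(0)=G^0_{N,k}$.

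\textbf{Main obstacle.} The hardest step is the first one: the mismatch between the weight $\widetilde W$, which is built on the near-field energy only, and the full generator together with the bounded but $a$-dependent fields. Checking that every such contribution produces only $(1+M_a)^qkY_{N,k}$, with no $N$-dependence, is where I expect the most care to be needed.

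A second delicate point is justifying the energy test for the difference $H$ at fixed $N$. Here the pointwise envelopes of \Cref{lem:ft-approximation} and \eqref{eq:ft-approximation-envelopes} give the integrability hypotheses of \Cref{lem:common-weighted-flux-energy}.
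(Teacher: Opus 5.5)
\paragraph{Verdict.} Your architecture matches the paper's: subtract \eqref{eq:TV-U-PDE}, write $\widetilde G_{N,k+1}=H_{N,k+1}+h_{N,k,1}+U_{N,k}f_t$, bound the fluxes, apply \Cref{lem:common-weighted-flux-energy} at fixed $N,a,h,n$, and pass to the limits. As written, however, the proposal does not prove the lemma as stated, because of how you bound the far-field part of the interaction flux.

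\paragraph{The gap.} You bound $T_{\overline K^a(x_i-\cdot)-m_f^a(x_i)}H_{N,k+1}$ through \eqref{eq:ft-bounded-projection} with $\|\Psi\|_\infty\le2M_a$. This puts the coefficient $C_T(1+M_a^q)k$ in front of $Y_{N,k+1}$, and $M_a\gtrsim a^{-s-1}$ blows up as $a\downarrow0$. The statement asserts $C_Tk$ with $C_T$ independent of $a$. ``Enlarging $C_T$ once $a=a_T$ is fixed'' therefore proves a weaker lemma. That weaker lemma would still feed into \Cref{lem:ft-osgood}, which allows arbitrary fixed coefficients, but it is not the claim.

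\paragraph{The missing observation.} The full regularized kernel has weighted norm bounded uniformly in $a$. Since $K^h$ differs from $K$ only on $B_{2h}$, \eqref{eq:ft-core-regularization-bounds} gives
\[
 \sup_{0<a\le a_2}\ \sup_{0<h<a/10}\|K^h\|_{q,\varpi}\le\|K\|_{q,\varpi}+C_{d,s,q}a_2^{\kappa/q}.
\]
So do not split the interaction flux. Apply \Cref{lem:common-weighted-holder} once to $J^h_{i,k}[H_{N,k+1}]$, with the full kernel $K^h$ and the near-field energies, which satisfy $\widetilde e^h_{N,k+1}\ge\widetilde e^h_{N,k}+1+|v_{k+1}|^2$. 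This gives $C_TkY_{N,k+1}$ with an $a$-independent constant, and similarly $D_T^k(\varepsilon_N+\xi_{n,h})$ for $J^h_{i,k}[h^{h,n}_{N,k,1}]$. This is what the paper does. Replacing a kernel that is uniformly in $L^{q'}$ by its sup norm is exactly what costs the factor $M_a^q$.

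\paragraph{The rest of the argument.} Everything else agrees with the paper, up to bookkeeping.
\begin{itemize}
\item Your option (b) for the generator mismatch is the paper's route. Since $F^a_{i,k}$ is velocity-independent, $L_{N,k}=\widetilde L^a_{N,k}+\sum_iF^a_{i,k}\cdot\nabla_{v_i}$, and $F^a_{i,k}H_{N,k}$ joins the flux with $\sum_i\|F^a_{i,k}H_{N,k}\|^q_{L^q(\widetilde W_{N,k})}\le kM_a^qY_{N,k}$. \Cref{lem:common-weighted-flux-energy} then applies verbatim with the near-field operator and weight (see its last sentence). Option (a) would require re-proving that lemma with an extra drift.
\item Centering the far-field projection is unnecessary here, and as you wrote the flux it drops a term. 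Since $\int(H_{N,k+1}+h_{N,k,1})\,\dd z_{k+1}=H_{N,k}$, one has $\overline J[H_{N,k+1}+h_{N,k,1}]=T_{\overline K^a(x_i-\cdot)-m_f^a(x_i)}(H_{N,k+1}+h_{N,k,1})+m_f^a(x_i)H_{N,k}$. The missing term is bounded by $M_a|H_{N,k}|$, so it is harmless.
\item The regularization source $(K^h-K)*\rho_f\,U^{h,n}_{N,k}$ is controlled by $\|K^h-K\|_1\le C_{d,s}h^{d-s-1}$ from \eqref{eq:ft-core-regularization-bounds}. Citing \eqref{eq:kernel-assumptions} is wrong, since that concerns $K_\varepsilon$. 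The resulting source is $D_T^kh^{q(d-s-1)}$, which vanishes in the limit.
\end{itemize}
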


\begin{proof}
Define the internal near-field operator and the internal far-field force by
\begin{equation}\label{eq:ft-internal-far-force}
 \begin{aligned}
 \widetilde L_{N,k}^a
 &=\sum_{i=1}^kv_i\cdot\nabla_{x_i}
 +\frac1N\sum_{i\ne j\le k}\widetilde K^a(x_i-x_j)\cdot\nabla_{v_i},\\
 F_{i,k}^{a}
 &=\frac1N\sum_{\substack{j\le k\\j\ne i}}
                         \overline K^a(x_i-x_j),\qquad
 \|F_{i,k}^{a}\|_\infty\le\frac{k-1}{N}M_a\le M_a.
 \end{aligned}
\end{equation}
Thus $L_{N,k}=\widetilde L_{N,k}^a+
\sum_iF_{i,k}^{a}\cdot\nabla_{v_i}$.
The regularized operators satisfy the same identity with
$\widetilde K^{a,h}$ in place of $\widetilde K^a$.
By \Cref{lem:ft-approximation}\ref{item:ft-approximation-limits},
the modified marginals satisfy
\begin{equation}\label{eq:ft-modified-BBGKY}
 \partial_t\widetilde G_{N,k}+L_{N,k}\widetilde G_{N,k}
 -\sigma\Delta_{V_k}\widetilde G_{N,k}
 +\sum_{i=1}^k\nabla_{v_i}\cdot\left(
   \frac{N-k}{N}J_{i,k}[\widetilde G_{N,k+1}]+J_{i,k}^D\right)=0.
\end{equation}
The term with coefficient $(N-k)/N$ is absent at $k=N$.
For $k<N$, it follows from \eqref{eq:ft-remainder-definition} that
\[
 J_{i,k}[\widetilde G_{N,k+1}]-E_f(t,x_i)U_{N,k}
 =J_{i,k}[H_{N,k+1}]+J_{i,k}[h_{N,k,1}].
\]
Subtracting \eqref{eq:TV-U-PDE} from \eqref{eq:ft-modified-BBGKY}
and using \eqref{eq:ft-internal-far-force} yields
\begin{equation*}
 \partial_tH_{N,k}+\widetilde L_{N,k}^aH_{N,k}
 -\sigma\Delta_{V_k}H_{N,k}
                 +\sum_i\nabla_{v_i}\cdot\mathcal F_{i,k}=0,
\end{equation*}
where
\begin{equation*}
 \mathcal F_{i,k}=F_{i,k}^{a}H_{N,k}
 +\frac{N-k}{N}
    \left(J_{i,k}[H_{N,k+1}]+J_{i,k}[h_{N,k,1}]\right)+J_{i,k}^D.
\end{equation*}
For the regularized kernel, we have
\[
 J_{i,k}^h[U_{N,k}^{h,n}f_t]
 =(K^h*\rho_f)(t,x_i)U_{N,k}^{h,n}.
\]
The regularized equation therefore has internal operator
$\widetilde L_{N,k}^{a,h}$ and flux
\begin{equation*}
 \begin{aligned}
 \mathcal F_{i,k}^{h,n}
 =F_{i,k}^{a}H_{N,k}^{h,n}
 +\frac{N-k}{N}\Bigl(&J_{i,k}^h[H_{N,k+1}^{h,n}]
                    +J_{i,k}^h[h_{N,k,1}^{h,n}]\\
                   &+(K^h*\rho_f-E_f)(t,x_i)U_{N,k}^{h,n}\Bigr)
                    +J_{i,k}^{D,h,n}.
 \end{aligned}
\end{equation*}
At $k=N$, this is
$F_{i,N}^{a}H_{N,N}^{h,n}+D_i\widetilde G_N^{h,n}$.
The internal far-field term satisfies
\begin{equation}\label{eq:ft-internal-far-flux-bound}
 \sum_{i=1}^k
 \|F_{i,k}^{a}H_{N,k}^{h,n}\|_{L^q(\widetilde W_{N,k}^h)}^q
 \le kM_a^qY_{N,k}^{h,n}.
\end{equation}

For $k<N$, nonnegativity of $\widetilde\phi^{a,h}$,
\eqref{eq:kernel-assumptions} and
\eqref{eq:ft-core-regularization-bounds} give
\[
 \begin{gathered}
 \widetilde e_{N,k+1}^h
 \ge\widetilde e_{N,k}^h+1+|v_{k+1}|^2,\\
 \sup_{0<a\le a_2}\sup_{0<h<a/10}\|K^h\|_{q,\varpi}
 \le\|K\|_{q,\varpi}+C_{d,s,q}a_2^{\kappa/q}<\infty.
 \end{gathered}
\]
Apply \Cref{lem:common-weighted-holder} to $K^h$ with the energies
$\widetilde e_{N,k}^h$ and $\widetilde e_{N,k+1}^h$.
For the auxiliary remainder, replace $\widetilde e_{N,k+1}^h$ by
$\widetilde e_{N,k}^h+1+|v_{k+1}|^2$ and use
$\widetilde e_{N,k}^h\le e_{N,k}^h$ with
\Cref{lem:ft-two-label-residual}. We obtain
\begin{align}
 \sum_{i=1}^k\|J_{i,k}^h[H_{N,k+1}^{h,n}]
                                  \|_{L^q(\widetilde W_{N,k}^h)}^q
 &\le C_TkY_{N,k+1}^{h,n},
 \label{eq:ft-error-interaction-flux}\\
 \sum_{i=1}^k\|J_{i,k}^h[h_{N,k,1}^{h,n}]
                                  \|_{L^q(\widetilde W_{N,k}^h)}^q
 &\le C_Tk\int|h_{N,k,1}^{h,n}|^q
 e^{\lambda(t)(e_{N,k}^h+1+|v_{k+1}|^2)}\varpi_{k+1}\notag\\
 &\le D_T^k(\varepsilon_N+\xi_{n,h}).
 \label{eq:ft-remainder-interaction-flux}
\end{align}
Combining the density bound \eqref{eq:common-derived-f-envelope},
\eqref{eq:ft-core-regularization-bounds} and \eqref{eq:ft-U-cap},
we obtain
\begin{equation}\label{eq:ft-regularized-field-error}
 \begin{aligned}
 \|K^h*\rho_f-E_f\|_\infty
 &\le C_Th^{d-s-1},\\
 \sum_i\|(K^h*\rho_f-E_f)U_{N,k}^{h,n}
                         \|_{L^q(\widetilde W_{N,k}^h)}^q
 &\le D_T^kh^{q(d-s-1)}.
 \end{aligned}
\end{equation}
Using $|w_1+\cdots+w_5|^q\le5^{q-1}\sum_{\ell=1}^5|w_\ell|^q$,
combine \eqref{eq:ft-internal-far-flux-bound}--\eqref{eq:ft-regularized-field-error}
with \Cref{lem:ft-clipping-flux} to obtain
\begin{equation*}
 \begin{aligned}
 \sum_i\|\mathcal F_{i,k}^{h,n}\|_{L^q(\widetilde W_{N,k}^h)}^q
 \le C_T\Bigl[&kM_a^qY_{N,k}^{h,n}+kY_{N,k+1}^{h,n}
                      +k(1+M_a)^{2q}Y_{N,k+2}^{h,n}\\
 &+(1+M_a)^{2q}D_T^k
          (\varepsilon_N+\xi_{n,h}+h^{q(d-s-1)})\Bigr].
 \end{aligned}
\end{equation*}
At fixed $N,a,h,n$, apply \Cref{lem:common-weighted-flux-energy}
with internal operator $\widetilde L_{N,k}^{a,h}$ and weight
$\widetilde W_{N,k}^h$, using the energy tests in
\Cref{lem:ft-approximation}\ref{item:ft-approximation-limits}.
Since $\lambda(t)\ge\lambda(T)>0$, we conclude that
\begin{equation}\label{eq:ft-regularized-error-hierarchy}
 \begin{aligned}
 Y_{N,k}^{h,n}(t)-Y_{N,k}^{h,n}(s)
 \le C_T\int_s^t\Bigl[&k\bigl((1+M_a)^qY_{N,k}^{h,n}
             +Y_{N,k+1}^{h,n}+(1+M_a)^{2q}Y_{N,k+2}^{h,n}\bigr)\\
 &+(1+M_a)^{2q}D_T^k
     (\varepsilon_N+\xi_{n,h}+h^{q(d-s-1)})\Bigr](u)\,\dd u.
 \end{aligned}
\end{equation}
The constants $C_T,D_T$ are independent of $a,N,k,h,n$, and
$Y_{N,k}^{h,n}(0)=0$. At fixed $N,a$,
\Cref{lem:ft-approximation}\ref{item:ft-approximation-weights}
applied with the near-field energy yields
\[
 Y_{N,k}^{h,n}\longrightarrow Y_{N,k}^{h}\longrightarrow Y_{N,k}
 \quad\text{in }C([0,T]),\qquad
 \xi_{n,h}\longrightarrow0\quad(n\to\infty).
\]
By \eqref{eq:fixed-N-error-source-convergence} and
\Cref{lem:ft-approximation}\ref{item:ft-approximation-limits},
we may pass successively to these limits in
\eqref{eq:ft-regularized-error-hierarchy} and obtain
\eqref{eq:ft-error-hierarchy} on every $[s,t]\subset[0,T]$.
\end{proof}

\subsection{Geometric summation and Bihari's inequality}

\begin{lemma}
\label{lem:ft-osgood}
Let $J\ge0$ be a fixed integer, let $C_4,D\ge1$ be independent of
$N$, and let $\varepsilon_N>0$. Suppose that continuous nonnegative
functions $y_1,\ldots,y_N$ satisfy $y_k(0)=0$, $y_k\le C_4^k$, and
\begin{equation}\label{eq:ft-general-error-hierarchy}
 y_k(t)-y_k(s)\le\int_s^t\left[
 k\sum_{j=0}^Ja_j(u)y_{k+j}(u)
       +b(u)D^k\varepsilon_N\right]\dd u,
 \qquad 0\le s\le t\le T,
\end{equation}
where $y_k=0$ for $k>N$ and $a_0,\ldots,a_J,b$ are nonnegative
functions in $L^1(0,T)$, independent of $N$. Define
\begin{equation*}
 r=\frac1{4\max\{C_4,D\}},\qquad
 c(t)=b(t)+3\sum_{j=0}^Ja_j(t)r^{-j},\qquad
 \omega(t)=\exp\left(-\int_0^tc(u)\,\dd u\right).
\end{equation*}
Then
\begin{equation}\label{eq:ft-osgood-result}
 \sup_{0\le u\le t}y_k(u)
 \le(e^2/r)^k\varepsilon_N^{\omega(t)},
 \qquad 1\le k\le N,\quad 0\le t\le T.
\end{equation}
\end{lemma}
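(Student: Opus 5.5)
The plan is to collapse the hierarchy \eqref{eq:ft-general-error-hierarchy} into a single scalar inequality for a generating function, and then close it with a Bihari/Osgood comparison whose solution is exactly $\varepsilon_N^{\omega(t)}$. Set $x_k(t)=r^ky_k(t)$ and $S(t)=\sum_{k=1}^Nx_k(t)$. Since $y_k\le C_4^k$ and $rC_4\le1/4$, we have $x_k\le4^{-k}$ and $S\le1/3$. Also $S(0)=0$, and $S$ is continuous.

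Multiply \eqref{eq:ft-general-error-hierarchy} by $r^k$ and sum over $k$. The source term contributes at most $b\,\varepsilon_N\sum_k(rD)^k\le b\,\varepsilon_N/3$. In the interaction terms, write $k\,r^ky_{k+j}=r^{-j}k\,x_{k+j}\le r^{-j}(k+j)x_{k+j}$. Summing over $k$ then gives
\[
 S(t)-S(s)\le\int_s^t\Bigl[\Bigl(\sum_{j=0}^Ja_jr^{-j}\Bigr)\sum_{k}k\,x_k+\tfrac13b\,\varepsilon_N\Bigr]\dd u .
\]
The factor $k$ is the main obstacle, because it prevents a linear Gronwall closure. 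I would handle it through the a priori bound $x_k\le4^{-k}$. For any integer $K\ge1$,
\[
 \sum_kk\,x_k\le K S+\sum_{k>K}k4^{-k}\le KS+C K4^{-K}.
\]
Choosing $K\approx\log_4(1/S)$ yields $\sum_kkx_k\le C'S\log(e^2/S)$. The constants are tracked so that the factor $3$ in $c(t)$, together with a factor $e^2$ inside the logarithm, absorbs everything; using $\log4>1$ should give a clean constant.

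We then compare $\Phi:=\max\{S,\varepsilon_N\}$, or more simply $S$ itself, with the function $z(t)=e^2\varepsilon_N^{\omega(t)}$. A direct computation gives $\log z=2+\omega\log\varepsilon_N$ and $z'=c(t)\,\omega(-\log\varepsilon_N)\,z=c(t)\,z\log(e^2/z)$. Since $z\ge e^2\varepsilon_N$, the source term $b\varepsilon_N/3$ is dominated by $b\,z\log(e^2/z)$ on the relevant range (the logarithm there is at least $\log 3$ because $z\ge\ldots$; if necessary, replace $e^2$ by a slightly larger constant). Hence $z$ is a strict supersolution of
\[
 S'\le c(t)\,\Psi(S),\qquad \Psi(S)=S\log(e^2/S)+\varepsilon_N .
\]
This $\Psi$ is increasing on $[0,1/3]$, and $S(0)=0<z(0)$. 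The integrated form of the inequality together with continuity gives $S\le z$ on $[0,T]$ by the standard first-crossing argument: at the first time $t_*$ with $S(t_*)=z(t_*)$, the integral inequality contradicts strict supersolution. This argument only requires $a_j,b\in L^1$, not differentiability.

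Finally, $y_k(u)=r^{-k}x_k(u)\le r^{-k}S(u)\le r^{-k}e^2\varepsilon_N^{\omega(u)}$. Since $\omega$ is decreasing and we may assume $\varepsilon_N\le1$ (otherwise the claim is trivial from $y_k\le C_4^k\le r^{-k}$), we get $\varepsilon_N^{\omega(u)}\le\varepsilon_N^{\omega(t)}$ for $u\le t$. Using $e^2\le e^{2k}$ then gives \eqref{eq:ft-osgood-result}.
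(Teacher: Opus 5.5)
Your proposal is correct and follows essentially the paper's route: the generating function $\sum_k r^ky_k$ with the a priori bound $r^ky_k\le4^{-k}$, the index shift, the logarithmic bound $\sum_k k r^ky_k\le 3S\log(e/S)$ from splitting at the level $4^{-m}\le S<4^{-(m-1)}$, and comparison with the solution of $z'=c\,z\log(e^2/z)$. The only difference is that you compare $S$ itself with $z=e^2\varepsilon_N^{\omega}$ by a first-crossing argument, whereas the paper applies Bihari's inequality to $S+\varepsilon_N$ with $z(0)=\varepsilon_N$. There is one slip to fix: $z$ is not a supersolution for $\Psi(S)=S\log(e^2/S)+\varepsilon_N$ with coefficient $c$, since $z'=c\,z\log(e^2/z)=c\Psi(z)-c\varepsilon_N$. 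Keep instead the unmerged inequality $S'\le 3(\sum_ja_jr^{-j})S\log(e^2/S)+b\varepsilon_N/3$, use $z\log(e^2/z)\ge\varepsilon_N/3$ for $e^2\varepsilon_N\le z\le 1/3$, and take strictness from the initial gap $z(0)=e^2\varepsilon_N>0=S(0)$, which gives $S(t_*)\le z(t_*)-z(0)<z(t_*)$ at a first crossing.
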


\begin{proof}
If $\varepsilon_N\ge1$, the conclusion follows from
$y_k\le C_4^k$ and $e^2/r\ge C_4$. Assume $0<\varepsilon_N\le1$ and set
$\mathcal Y(t)=\sum_{k=1}^Nr^ky_k(t)$.
The choice of $r$ ensures $r^ky_k\le4^{-k}$ and
$0\le\mathcal Y\le1/3$. At a time when $\mathcal Y>0$,
choose $m\ge1$ such that $4^{-m}\le\mathcal Y<4^{-(m-1)}$.
The geometric tail satisfies
\begin{equation*}
 \sum_{k>m}k4^{-k}
 =4^{-m}\left(\frac{m+1}{3}+\frac19\right)
 \le\frac{m+1}{2}\,4^{-m},\qquad
 m\le1+\frac{\log(1/\mathcal Y)}{\log4}.
\end{equation*}
It follows that
\begin{equation}\label{eq:ft-log-moment}
 \sum_{k=1}^Nkr^ky_k
 \le m\mathcal Y+\sum_{k>m}k4^{-k}
 \le\frac{3m+1}{2}\mathcal Y
 \le3\mathcal Y\log(e/\mathcal Y).
\end{equation}
At $\mathcal Y=0$, interpret $\mathcal Y\log(e/\mathcal Y)$
by continuity. For $0\le j\le J$, shifting the index yields
\begin{equation}\label{eq:ft-shifted-geometric-sum}
 \sum_{k=1}^Nkr^ky_{k+j}
 =r^{-j}\sum_{\ell=j+1}^N(\ell-j)r^\ell y_\ell
 \le r^{-j}\sum_{\ell=1}^N\ell r^\ell y_\ell.
\end{equation}
Multiply \eqref{eq:ft-general-error-hierarchy} with $s=0$ by
$r^k$ and sum over $k$. Since $rD\le1/4$,
\eqref{eq:ft-log-moment} and \eqref{eq:ft-shifted-geometric-sum}
imply
\begin{equation}\label{eq:ft-summed-hierarchy}
 \mathcal Y(t)\le\int_0^t\left[
 3\left(\sum_{j=0}^Ja_j(u)r^{-j}\right)
       \mathcal Y(u)\log\frac e{\mathcal Y(u)}
       +\frac13b(u)\varepsilon_N\right]\dd u.
\end{equation}
Since $z\mapsto z\log(e^2/z)$ is increasing on $[0,4/3]$ and
$0<\mathcal Y+\varepsilon_N\le4/3$, we have
\[
 \begin{aligned}
 \mathcal Y\log(e/\mathcal Y)
 &\le\mathcal Y\log(e^2/\mathcal Y)
 \le(\mathcal Y+\varepsilon_N)
       \log\frac{e^2}{\mathcal Y+\varepsilon_N},\\
 \frac{\varepsilon_N}{3}
 &\le(\mathcal Y+\varepsilon_N)
       \log\frac{e^2}{\mathcal Y+\varepsilon_N}.
 \end{aligned}
\]
Substituting these bounds into \eqref{eq:ft-summed-hierarchy}, we obtain
\begin{equation*}
 \mathcal Y(t)+\varepsilon_N
 \le\varepsilon_N+\int_0^tc(u)
 (\mathcal Y(u)+\varepsilon_N)
       \log\frac{e^2}{\mathcal Y(u)+\varepsilon_N}\,\dd u.
\end{equation*}
The absolutely continuous solution of $z'=c(t)z\log(e^2/z)$ with
$z(0)=\varepsilon_N$ satisfies
\begin{equation*}
 \begin{aligned}
 \frac{\dd}{\dd t}\log\frac{e^2}{z(t)}
 &=-c(t)\log\frac{e^2}{z(t)},\\
 \log\frac{e^2}{z(t)}
 &=\omega(t)\log\frac{e^2}{\varepsilon_N},\qquad
 z(t)=e^{2(1-\omega(t))}\varepsilon_N^{\omega(t)}.
 \end{aligned}
\end{equation*}
Bihari's inequality \cite{Bihari1956} implies
$\mathcal Y+\varepsilon_N\le z$ until $z$ reaches $4/3$.
For $z\ge4/3$, we have $\mathcal Y+\varepsilon_N\le4/3\le z$.
Since $z$ is nondecreasing, for $u\le t$ we obtain
\begin{equation*}
 r^ky_k(u)\le\mathcal Y(u)+\varepsilon_N
 \le z(u)\le z(t)\le e^2\varepsilon_N^{\omega(t)}.
\end{equation*}
Dividing by $r^k$ and using $e^2\le e^{2k}$ proves
\eqref{eq:ft-osgood-result}.
\end{proof}

Set $a=a_T$ and fix constants $C_T,D_T$ for which
\eqref{eq:ft-error-hierarchy} holds. With these fixed values and
$C_4$ from \eqref{eq:ft-error-cap}, define
\begin{equation}\label{eq:ft-final-parameters}
 \begin{aligned}
 r&=\frac1{4\max\{C_4,D_T\}},\\
 c_T&=C_T(1+M_{a_T})^{2q}
 +3C_T\bigl((1+M_{a_T})^q+r^{-1}
                         +(1+M_{a_T})^{2q}r^{-2}\bigr),\\
 \gamma_T&=\frac1q e^{-c_TT}.
 \end{aligned}
\end{equation}
The constants $C_4,C_T,D_T$ and $r$ are independent of the radius
$a\in(0,a_T]$. In particular, \eqref{eq:ft-far-force-size} implies
\begin{equation*}
 \begin{aligned}
 1\le c_T
 &\le C_T(4+3r^{-1}+3r^{-2})(1+M_{a_T})^{2q}\\
 &\le C_T'\bigl(1+a_T^{-2q(s+1)}\bigr).
 \end{aligned}
\end{equation*}
The constant $C_T'$ depends only on $T$ and the constants fixed
before the choice \eqref{eq:ft-explicit-radius}.
For every finite $T$ under consideration, $a_T>0$ and
$\gamma_T>0$.

\begin{proposition}
\label{prop:ft-clipped-comparison}
\label{lem:ft-velocity-weighted-TV}
With $\gamma_T$ given by \eqref{eq:ft-final-parameters}, there is $C_T\ge1$,
independent of $N,k$, such that
\begin{equation}\label{eq:ft-clipped-TV}
 \sup_{t\le T}\|\widetilde G_{N,k}(t)-f_t^{\otimes k}\|_{\TV}
 \le C_T^k\varepsilon_N^{\gamma_T},\qquad 1\le k\le N.
\end{equation}
For $k=1,2$ with $k\le N$, we also have
\begin{equation}\label{eq:ft-velocity-weighted-TV}
 \sup_{t\le T}\int (1+|V_k|^2)|\widetilde G_{N,k}(t)-f_t^{\otimes k}|
 \le C_T\varepsilon_N^{\gamma_T}.
\end{equation}
\end{proposition}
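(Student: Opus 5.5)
The argument splits $\widetilde G_{N,k}-f_t^{\otimes k}=H_{N,k}+(U_{N,k}-f_t^{\otimes k})$. The auxiliary term is already controlled: \Cref{lem:TV-U-product} together with \eqref{eq:conditioned-initial-error} gives $\sup_{t\le T}\|U_{N,k}(t)-f_t^{\otimes k}\|_1\le C_T^k\varepsilon_N\le C_T^k\varepsilon_N^{\gamma_T}$, since $\varepsilon_N\le 2$ and $\gamma_T\le1$. (The case $\varepsilon_N>1$, i.e. $N=1$, is trivial because all TV norms are at most $2$.) So everything reduces to bounding the weighted error $Y_{N,k}$ from \eqref{eq:ft-error-definition}.

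With $a=a_T$, \Cref{lem:ft-error-hierarchy} has exactly the form \eqref{eq:ft-general-error-hierarchy}. We take $J=2$, and the coefficients are constants: $a_0=C_T(1+M_{a_T})^q$, $a_1=C_T$, $a_2=C_T(1+M_{a_T})^{2q}$, $b=C_T(1+M_{a_T})^{2q}$, with $D=D_T$. We have $Y_{N,k}(0)=0$, and $Y_{N,k}\le C_4^k$ by \eqref{eq:ft-error-cap}. Continuity of $Y_{N,k}$ in $t$ comes from the $C([0,T])$ convergence in the approximation lemma. With these choices, $\int_0^T c\,\dd u$ in \Cref{lem:ft-osgood} equals $c_TT$ as defined in \eqref{eq:ft-final-parameters}, so $\omega(T)=e^{-c_TT}=q\gamma_T$. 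Since $\varepsilon_N\le1$ and $\omega$ is decreasing, we get $\varepsilon_N^{\omega(t)}\le\varepsilon_N^{q\gamma_T}$. \Cref{lem:ft-osgood} therefore yields
\[
\sup_{t\le T}Y_{N,k}(t)\le(e^2/r)^k\varepsilon_N^{q\gamma_T}.
\]

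Next we pass from $L^q(\widetilde W_{N,k})$ to $L^1$ by Hölder:
\[
\|H_{N,k}\|_1\le Y_{N,k}^{1/q}\Bigl(\int\widetilde W_{N,k}^{-1/(q-1)}\Bigr)^{(q-1)/q}.
\]
Because $\widetilde\phi^a\ge0$, we have $\widetilde W_{N,k}\ge e^{\lambda(T)\sum_i(1+|v_i|^2)}\varpi_k$. The resulting integral factorizes and is bounded by $C^k$, using \eqref{eq:common-weight-compatibility} (this is exactly $C_{T,1}^{k/(q-1)}$ from \eqref{eq:ft-bounded-projection}). This gives $\|H_{N,k}\|_1\le C_T^k\varepsilon_N^{\gamma_T}$, and combined with the first paragraph it proves \eqref{eq:ft-clipped-TV}.

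For \eqref{eq:ft-velocity-weighted-TV} with $k=1,2$, we insert the factor $(1+|V_k|^2)$ into the same Hölder step:
\[
\int(1+|V_k|^2)|H_{N,k}|\le Y_{N,k}^{1/q}\Bigl(\int(1+|V_k|^2)^{q'}e^{-\lambda(T)\sum_i(1+|v_i|^2)/(q-1)}\varpi_k^{-1/(q-1)}\Bigr)^{1/q'},
\]
and the last integral is finite because of the Gaussian factor. For the auxiliary part we must upgrade the $L^1$ bound on $U_{N,k}-f_t^{\otimes k}$ to one with velocity weight. The first thing to try is to interpolate between the $L^1$ bound $C_T^k\varepsilon_N$ and the Gaussian pointwise envelopes \eqref{eq:TV-U-bound} and \eqref{eq:common-derived-f-envelope}. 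Splitting at $|V_k|\le L$ gives $(1+L^2)C\varepsilon_N+Ce^{-cL^2}$, and choosing $L^2\sim\log(1/\varepsilon_N)$ yields $C\varepsilon_N\log(1/\varepsilon_N)\le C\varepsilon_N^{\gamma_T}$, since $\gamma_T<1$. An alternative is Hölder with a weighted $L^q$ interpolation as in \Cref{lem:ft-two-label-residual}, which gives $\varepsilon_N^{1/q}$; this also suffices provided $\gamma_T\le 1/q$, and that holds because $c_T\ge 1$.

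The main obstacle is the bookkeeping that makes the constants in the hierarchy genuinely independent of $N$ and $k$ once $a=a_T$ is fixed, so that \Cref{lem:ft-osgood} applies with $N$-independent $a_j$ and $b$. This has already been ensured in \eqref{eq:ft-final-parameters}. The remaining care point is confirming that $\omega(T)=q\gamma_T$ exactly, including the factor $3$ and the $r^{-j}$ weights, and that the case $\varepsilon_N>1$ is excluded or handled trivially.
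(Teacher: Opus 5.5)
Your proposal is correct and follows the paper's proof essentially step for step. You apply \Cref{lem:ft-osgood} to \eqref{eq:ft-error-hierarchy} with the same constant coefficients, so that $\omega(T)=q\gamma_T$; you then use H\"older against $\widetilde W_{N,k}^{-1/(q-1)}$, inserting $(1+|V_k|^2)^{q'}$ for $k=1,2$, and \Cref{lem:TV-U-product} for the auxiliary term. For the velocity-weighted auxiliary term the paper uses exactly your second option, H\"older between $\|U_{N,k}-f_t^{\otimes k}\|_1$ and the envelope-bounded $(1+|V_k|^2)^{q'}$-moment. Note that $\gamma_T\le 1/q$ holds simply because $e^{-c_TT}\le1$, not because $c_T\ge1$; your logarithmic truncation alternative also works.
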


\begin{proof}
Use the fixed constants in \eqref{eq:ft-final-parameters} and
apply \Cref{lem:ft-osgood} to \eqref{eq:ft-error-hierarchy} with
$J=2$, $D=D_T$ and the constant coefficients
\[
 a_0=C_T(1+M_{a_T})^q,\qquad
 a_1=C_T,\qquad
 a_2=C_T(1+M_{a_T})^{2q},\qquad
 b=C_T(1+M_{a_T})^{2q}.
\]
By \eqref{eq:ft-error-cap}, we have
\[
 \sup_{t\le T}Y_{N,k}(t)
 \le(e^2/r)^k\varepsilon_N^{q\gamma_T}.
\]
In the remaining estimates, $C_T$ denotes a generic constant
independent of $N,k$, with $r,c_T,\gamma_T$ fixed by
\eqref{eq:ft-final-parameters}.
The lower bound $\widetilde e_{N,k}\ge\sum_i(1+|v_i|^2)$,
$\lambda(t)\ge\lambda(T)>0$ and the spatial integrability in
\eqref{eq:TV-alpha-choice} imply
\begin{equation}\label{eq:ft-inverse-weight-moments}
 \sup_{t\le T}\int \widetilde W_{N,k}^{-1/(q-1)}\le C_T^k,\qquad
 \sup_{t\le T}\int (1+|V_k|^2)^{q'}\widetilde W_{N,k}^{-1/(q-1)}\le C_T
 \quad(k=1,2).
\end{equation}
H\"older's inequality gives
\begin{align*}
 \|H_{N,k}\|_1
 &\le Y_{N,k}^{1/q}
       \left(\int \widetilde W_{N,k}^{-1/(q-1)}\right)^{1/q'}
 \le C_T^k\varepsilon_N^{\gamma_T},\\
 \int(1+|V_k|^2)|H_{N,k}|
 &\le Y_{N,k}^{1/q}
       \left(\int(1+|V_k|^2)^{q'}\widetilde W_{N,k}^{-1/(q-1)}\right)^{1/q'}
 \le C_T\varepsilon_N^{\gamma_T}\quad(k=1,2).
\end{align*}
Since $U_{N,k}$ and $f_t^{\otimes k}$ have unit mass,
\Cref{lem:TV-U-product} gives
\begin{equation*}
 \|U_{N,k}-f_t^{\otimes k}\|_1
 \le C_T^k\min\{1,\varepsilon_N\}
 \le C_T^k\varepsilon_N^{\gamma_T}.
\end{equation*}
This proves \eqref{eq:ft-clipped-TV}. For $k=1,2$,
\eqref{eq:TV-U-bound} and \eqref{eq:common-derived-f-envelope} imply
\[
 \int(1+|V_k|^2)^{q'}|U_{N,k}-f_t^{\otimes k}|\le C_T.
\]
Hence \Cref{lem:TV-U-product} and H\"older's inequality yield
\begin{align*}
 \int(1+|V_k|^2)|U_{N,k}-f_t^{\otimes k}|
 &\le\|U_{N,k}-f_t^{\otimes k}\|_1^{1/q}
 \left(\int(1+|V_k|^2)^{q'}|U_{N,k}-f_t^{\otimes k}|\right)^{1/q'}\\
 &\le C_T\min\{1,\varepsilon_N\}^{1/q}
 \le C_T\varepsilon_N^{\gamma_T}.
\end{align*}
This proves \eqref{eq:ft-velocity-weighted-TV}.
\end{proof}

%\subsection{Spatial observables}

\begin{lemma}
\label{lem:ft-spatial-observables}
For $g\in C_b^1(\mathcal D;\R)$, set
\begin{equation*}
 D_g(t)=\frac1N\sum_{i=1}^Ng(\widetilde X_i(t))
                              -\int g(x)\rho_f(t,x)\,\dd x.
\end{equation*}
There is $C_{T,g}\le C_T(\|g\|_\infty+\|\nabla g\|_\infty)^2$ such that
\begin{equation*}
 \E\sup_{0\le t\le T}|D_g(t)|^2\le C_{T,g}\varepsilon_N^{\gamma_T}.
\end{equation*}
\end{lemma}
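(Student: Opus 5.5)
We estimate the supremum over $[0,T]$ of the squared empirical fluctuation of a spatial observable along the modified dynamics \eqref{eq:ft-clipped-system}. The plan is to control the fixed-time second moment $\E|D_g(t)|^2$ through the one- and two-particle marginals, and then upgrade to the supremum in time with an Itô/Doob argument.

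\emph{Fixed-time bound.} By exchangeability,
\[
 \E|D_g(t)|^2=\frac1N\int g(x_1)^2\rho_{\widetilde G_{N,1}}
 +\frac{N-1}{N}\int g(x_1)g(x_2)\widetilde G_{N,2}
 -2\int g\rho_{\widetilde G_{N,1}}\int g\rho_f+\Bigl(\int g\rho_f\Bigr)^2 .
\]
Replacing $\widetilde G_{N,1}$ by $f_t$ and $\widetilde G_{N,2}$ by $f_t^{\otimes2}$ makes the non-$1/N$ terms cancel exactly. The replacement costs at most $\|g\|_\infty^2$ times $\|\widetilde G_{N,k}-f_t^{\otimes k}\|_{\TV}$ for $k=1,2$. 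By \eqref{eq:ft-clipped-TV}, this gives
\[
 \sup_{t\le T}\E|D_g(t)|^2\le C_T\|g\|_\infty^2\varepsilon_N^{\gamma_T},
\]
where $N^{-1}\le\varepsilon_N\le\varepsilon_N^{\gamma_T}$.

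\emph{Dynamics of $D_g$.} We have $\dd D_g=\bigl(\frac1N\sum_i\nabla g(\widetilde X_i)\cdot\widetilde V_i-\int\nabla g\cdot j_f\bigr)\dd t$, with current $j_f=\int v f\,\dd v$. This uses the continuity equation $\partial_t\rho_f+\nabla\cdot j_f=0$, which follows from the classical solution and the Gaussian velocity decay \eqref{eq:common-derived-f-envelope}. Since $D_g$ has no martingale part, $\sup_t|D_g(t)|\le|D_g(0)|+\int_0^T|\Phi(u)|\dd u$, where $\Phi=\frac1N\sum_i\psi(\widetilde Z_i)-\int\psi f_t$ and $\psi(x,v)=\nabla g(x)\cdot v$. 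By Cauchy--Schwarz in time, $\E\sup|D_g|^2\le2\E|D_g(0)|^2+2T\int_0^T\E|\Phi(u)|^2\dd u$. The initial term is handled by the fixed-time bound, or directly since $G_N^0$ is close to a product.

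\emph{Bounding $\E|\Phi|^2$.} Expanding $\E|\Phi|^2$ as above gives a $1/N$ term $\frac1N\int|\psi|^2\widetilde G_{N,1}$. It also gives differences against $f_t$ and $f_t^{\otimes2}$ with integrands bounded by $\|\nabla g\|_\infty^2(1+|V_2|^2)$. The $1/N$ term is bounded by $C_T\|\nabla g\|_\infty^2/N$: the velocity second moment of $\widetilde G_{N,1}$ is controlled by \eqref{eq:ft-velocity-weighted-TV} together with the moments of $f_t$. The difference terms are bounded by $\|\nabla g\|_\infty^2$ times \eqref{eq:ft-velocity-weighted-TV} for $k=1,2$, using $|v_1||v_2|\le1+|V_2|^2$. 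This yields $\E|\Phi(u)|^2\le C_T\|\nabla g\|_\infty^2\varepsilon_N^{\gamma_T}$ uniformly in $u\le T$, and integrating gives the claim with $C_{T,g}\le C_T(\|g\|_\infty+\|\nabla g\|_\infty)^2$.

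The main obstacle is the unbounded velocity observable $\psi$. Total variation alone does not suffice there, which is precisely why the weighted bound \eqref{eq:ft-velocity-weighted-TV} is needed. One must also check that its weight $(1+|V_k|^2)$ dominates the quadratic expressions $|v_1|^2$ and $|v_1||v_2|$ appearing in the expansion.
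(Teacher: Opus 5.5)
Your proposal is correct and follows essentially the paper's route. Exchangeability reduces $\E|D_g(t)|^2$ and $\E|D_g'(t)|^2$ to the one- and two-particle comparisons \eqref{eq:ft-clipped-TV} and \eqref{eq:ft-velocity-weighted-TV}, and the absence of a martingale part lets you pass to the supremum in time pathwise. The differences are cosmetic: the paper centers $g$ and $\nabla g\cdot v$ so that only $k=2$ differences appear, and it uses $|D_g(t)|^2=|D_g(0)|^2+2\int_0^tD_gD_g'$ where you use $|D_g(0)|+\int_0^T|D_g'|$ with Cauchy--Schwarz. At $N=1$ you should invoke $N^{-1}\le\varepsilon_N^{\gamma_T}$ directly, or $|D_g|\le2\|g\|_\infty$ as the paper does, since $\varepsilon_N\le\varepsilon_N^{\gamma_T}$ fails there.
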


\begin{proof}
Assume first that $N\ge2$. Test the continuity equation against
$g$. On $\R^d$, insert a spatial cutoff and remove it using the
integrable velocity moment in \eqref{eq:common-derived-f-envelope}.
We obtain
\begin{equation*}
 \frac{\dd}{\dd t}\int g\rho_f
 =\int\nabla g(x)\cdot v f_t(x,v)\,\dd x\dd v
 \quad\text{for a.e. }t\in[0,T].
\end{equation*}
Together with $\widetilde X_i'=\widetilde V_i$, this shows that
$D_g$ is almost surely absolutely continuous, with
\begin{equation}\label{eq:ft-observable-path-derivative}
 D_g'=\frac1N\sum_i\nabla g(\widetilde X_i)\cdot\widetilde V_i
                                  -\int\nabla g(x)\cdot v f_t.
\end{equation}
By \eqref{eq:common-derived-f-envelope}, we have
\[
 \left|\nabla g(x)\cdot v-\int\nabla g(y)\cdot w f_t(y,w)\,\dd y\dd w\right|^2
 \le C_T\|\nabla g\|_\infty^2(1+|v|^2).
\]
By exchangeability and \eqref{eq:ft-observable-path-derivative}, we have
\begin{align*}
 \E|D_g(t)|^2
 &=\frac1N\int\left|g(x)-\int g\rho_f(t)\right|^2
   \widetilde G_{N,1}(t,x,v)\,\dd x\dd v\\
 &\quad+\frac{N-1}{N}\int
       \prod_{\ell=1}^2\left(g(x_\ell)-\int g\rho_f(t)\right)
 \bigl(\widetilde G_{N,2}(t)-f_t^{\otimes2}\bigr)\,\dd Z_2\\
 &\le4\|g\|_\infty^2\left(
\frac1N+\|\widetilde G_{N,2}(t)-f_t^{\otimes2}\|_1\right)
  \end{align*}
  and
\begin{align*}
 \E|D_g'(t)|^2
 &=\frac1N\int\left|\nabla g(x)\cdot v-\int\nabla g\cdot vf_t\right|^2
  \widetilde G_{N,1}(t,x,v)\,\dd x\dd v\\
 &\quad+\frac{N-1}{N}\int
 \prod_{\ell=1}^2\left(\nabla g(x_\ell)\cdot v_\ell
    -\int\nabla g\cdot vf_t\right)
\bigl(\widetilde G_{N,2}(t)-f_t^{\otimes2}\bigr)\,\dd Z_2\\
 &\le C_T\|\nabla g\|_\infty^2\left(
 \frac1N\int(1+|v|^2)\widetilde G_{N,1}(t)
 +\int (1+|V_2|^2)|\widetilde G_{N,2}(t)-f_t^{\otimes2}|\right).
\end{align*}
H\"older's inequality, \eqref{eq:ft-near-cap} and
\eqref{eq:ft-inverse-weight-moments} bound the one-particle
velocity moment in the last line uniformly on $[0,T]$.
Since $N^{-1}\le\min\{1,\varepsilon_N\}\le\varepsilon_N^{\gamma_T}$,
\Cref{prop:ft-clipped-comparison} yields
\begin{equation*}
 \sup_{t\le T}\E|D_g(t)|^2
 +\mathop{\rm ess\,sup}_{t\le T}\E|D_g'(t)|^2
 \le C_{T,g}\varepsilon_N^{\gamma_T}.
\end{equation*}
Integrating the derivative of the square and taking expectations,
we obtain
\begin{align*}
 \E\sup_{t\le T}|D_g(t)|^2
 &\le\E|D_g(0)|^2+2\int_0^T\E|D_g(t)D_g'(t)|\,\dd t\\
 &\le\E|D_g(0)|^2+2\int_0^T
       (\E|D_g(t)|^2)^{1/2}(\E|D_g'(t)|^2)^{1/2}\,\dd t
 \le C_{T,g}\varepsilon_N^{\gamma_T}.
\end{align*}
For $N=1$, use $|D_g|\le2\|g\|_\infty$.

This completes the proof.
\end{proof}

\subsection{Comparison with the original dynamics}
\label{subsec:ft-unclipping}

Here $a=a_T$ is fixed. Constants in this subsection may depend on
$M_{a_T}$ and $L_{a_T}$, where
$L_a=\operatorname{Lip}(\overline K^a)$, and are independent of $N,k$.
We define
\begin{equation*}
 \mathcal E_N(t,x)=\frac1N\sum_{j=1}^N
 \overline K^a(x-\widetilde X_j(t)).
\end{equation*}
Since $\overline K^a(0)=0$, its value at $\widetilde X_i(t)$ equals
the empirical far-field force in \eqref{eq:ft-clipped-system}.

\begin{lemma}
\label{lem:ft-unclipping}
The modified system satisfies
\begin{equation}\label{eq:ft-unclipping-probability}
 \Pp\left(\sup_{t\le T}\max_{i\le N}
              |\mathcal E_N(t,\widetilde X_i(t))|>B\right)
 \le C_T\varepsilon_N^{\gamma_T}.
\end{equation}
If $Z_N$ and $\widetilde Z_N$ denote the original and modified
trajectories with initial law $G_N^0$, then
\begin{equation}\label{eq:ft-unclipping-path-TV}
 \|\operatorname{Law}(Z_N|_{[0,T]})
      -\operatorname{Law}(\widetilde Z_N|_{[0,T]})\|_{\TV}
 \le C_T\varepsilon_N^{\gamma_T}.
\end{equation}
\end{lemma}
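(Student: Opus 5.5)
The plan is to prove \eqref{eq:ft-unclipping-probability} by controlling the random field $\mathcal E_N(t,x)-m_f^a(t,x)$ uniformly in $(t,x)$. The margin \eqref{eq:ft-clipping-margin} gives $|m_f^a|\le B-1$, so on the event in question we have
\[
 \sup_{t\le T}\sup_{x}|\mathcal E_N(t,x)-m_f^a(t,x)|>1 .
\]
Since $\overline K^a$ is smooth, bounded and Lipschitz with constant $L_a$, the map $x\mapsto\mathcal E_N(t,x)-m_f^a(t,x)$ is $2L_a$-Lipschitz. On $\T^d$ we choose a finite grid $\{y_\ell\}$ of mesh $1/(4L_a)$; the number of points depends only on $a=a_T$. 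Then the event is contained in the union over $\ell$ of $\{\sup_t|\mathcal E_N(t,y_\ell)-m_f^a(t,y_\ell)|>1/2\}$. For fixed $y$, the quantity $\mathcal E_N(t,y)-m_f^a(t,y)$ is exactly $D_g(t)$ with $g=\overline K^a(y-\cdot)$ (componentwise), and $g\in C_b^1$ with norms bounded by $M_a+L_a$. \Cref{lem:ft-spatial-observables} and Chebyshev then give each term $\le 4C_T(M_a+L_a)^2\varepsilon_N^{\gamma_T}$, and the union bound finishes the torus case.

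On $\R^d$ the grid cannot be finite, and this is the step I expect to be the main obstacle. My first attempt is to split space into a large ball $B_\Lambda$ and its complement. Inside $B_\Lambda$ I use the finite grid as above. For particles with $|\widetilde X_i(t)|>\Lambda$, I would argue differently: $\overline K^a=K$ outside $B_{2a}$ decays like $|x|^{-s-1}$, and $m_f^a(t,x)\to0$ as $|x|\to\infty$ uniformly in $t$, using the decay of $\rho_f$ from \eqref{eq:common-derived-f-envelope}. It therefore suffices to show that, with probability $1-C\varepsilon_N^{\gamma_T}$, the far field $\mathcal E_N(t,x)$ at distant points is at most $1/2$. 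A crude bound is $|\mathcal E_N(t,x)|\le \frac1N\#\{j:|\widetilde X_j-x|\le R'\}M_a+\sup_{|z|\ge R'}|\overline K^a(z)|$, and the empirical fraction in a unit cell is again controlled by a $D_g$ with a smooth bump $g$. This still leaves a countable family, so instead I would use a single observable, $g=\min(1,\langle x\rangle/\Lambda)$-type functions, i.e. the empirical mass outside $B_{\Lambda/2}$. Choose $\Lambda$, depending on $T,a_T$ but not on $N$, so that $\int_{|x|>\Lambda/2}\rho_f<1/(8M_a)$. Then \Cref{lem:ft-spatial-observables} shows that with high probability the empirical mass outside $B_{\Lambda/2}$ is below $1/(4M_a)$. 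For $|x|>\Lambda$, the contribution to $\mathcal E_N(t,x)$ from particles in $B_{\Lambda/2}$ is at most $\sup_{|z|\ge\Lambda/2}|K(z)|<1/4$, and the remaining contribution is at most $M_a/(4M_a)=1/4$. Since $\sup_{|x|>\Lambda}|m_f^a|$ is small as well, this gives the bound at all remote points and closes \eqref{eq:ft-unclipping-probability}.

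For \eqref{eq:ft-unclipping-path-TV}, we couple synchronously as in the outline: same initial configuration from $G_N^0$ and same Brownian motions. Define the stopping time $\tau=\inf\{t:\max_i|\mathcal E_N(t,\widetilde X_i(t))|>B\}$. Before $\tau$, $\Theta_B$ acts as the identity, so the modified drift equals the original singular drift $\frac1N\sum_{j\ne i}K(\widetilde X_i-\widetilde X_j)$. Hence $\widetilde Z_N$ solves the original system on $[0,\tau\wedge T]$. Pathwise uniqueness of \Cref{prop:common-finite-N}, which requires distinct positions; this holds since both processes are constructed there, or respectively via \Cref{lem:ft-approximation}, then gives $Z_N=\widetilde Z_N$ on $[0,\tau\wedge T]$. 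Therefore $\{Z_N|_{[0,T]}\ne\widetilde Z_N|_{[0,T]}\}\subset\{\tau\le T\}$, and by continuity $\{\tau\le T\}$ is contained in the event of \eqref{eq:ft-unclipping-probability} up to replacing $B$ by $B-\delta$. Running the first part with $B-1/2$ loses nothing, since the margin is $1$. Finally \eqref{eq:intro-TV-contraction} gives the total variation bound $2\Pp(\tau\le T)\le C_T\varepsilon_N^{\gamma_T}$.
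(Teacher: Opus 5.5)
Your proposal is correct and follows the paper's proof essentially step by step. The steps are a Lipschitz covering argument with \Cref{lem:ft-spatial-observables}, Chebyshev and a union bound on $\T^d$ and on a large ball; a single tail observable for remote points on $\R^d$; and synchronous coupling with pathwise uniqueness. For the tail observable, take a $C_b^1$ cutoff vanishing on a smaller ball and equal to one outside $B_{\Lambda/2}$, as the paper does with $g_R$. The function $\min(1,\langle x\rangle/\Lambda)$ is neither $C^1$ nor small near the origin.

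Your shift to the threshold $B-1/2$ is harmless but unnecessary. Since $\Theta_B$ is the identity on the closed ball $\{|u|\le B\}$, the two systems already agree on $[0,T]$ outside the strict event $\{\sup_{t\le T}\max_i|\mathcal E_N(t,\widetilde X_i(t))|>B\}$.
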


\begin{proof}
Apply \Cref{lem:ft-spatial-observables} to each component of
$g_x(y)=\overline K^a(x-y)$. The componentwise $C_b^1$ bounds
are uniform in $x$, so
\begin{equation}\label{eq:ft-far-field-variance}
 \sup_x\E\sup_{t\le T}
           |\mathcal E_N(t,x)-m_f^a(t,x)|^2
 \le C_T\varepsilon_N^{\gamma_T}.
\end{equation}
The empirical spatial measure and the limiting spatial density
both have unit mass, so
\begin{equation}\label{eq:ft-far-field-spatial-continuity}
 |\mathcal E_N(t,x)-\mathcal E_N(t,y)|
 +|m_f^a(t,x)-m_f^a(t,y)|
 \le2L_a d_{\mathcal D}(x,y).
\end{equation}
On $\T^d$, choose a finite cover with centers
$\{x_\ell\}_{\ell=1}^J$ and radius $(8\max\{1,L_a\})^{-1}$.
The number $J$ depends only on the fixed kernel
$\overline K^{a_T}$ and the domain. By
\eqref{eq:ft-far-field-spatial-continuity}, we have
\begin{align}\label{eq:ft-covering-field-bound}
 &\sup_{t\le T,\,x\in\T^d}
       |\mathcal E_N(t,x)-m_f^a(t,x)|\notag\\
 \le&\max_{\ell\le J}\sup_{t\le T}
       |\mathcal E_N(t,x_\ell)-m_f^a(t,x_\ell)|
       +\frac{2L_a}{8\max\{1,L_a\}}\notag\\
 \le&\max_{\ell\le J}\sup_{t\le T}
       |\mathcal E_N(t,x_\ell)-m_f^a(t,x_\ell)|+\frac14.
\end{align}
Applying the union bound, Chebyshev's inequality and
\eqref{eq:ft-far-field-variance}, we obtain
\begin{align*}
 \Pp\left(\sup_{t\le T,\,x\in\T^d}
        |\mathcal E_N(t,x)-m_f^a(t,x)|>\frac12\right)
 &\le\sum_{\ell=1}^J\Pp\left(\sup_{t\le T}
        |\mathcal E_N(t,x_\ell)-m_f^a(t,x_\ell)|>\frac14\right)\\
 &\le16\sum_{\ell=1}^J\E\sup_{t\le T}
        |\mathcal E_N(t,x_\ell)-m_f^a(t,x_\ell)|^2
 \le C_T\varepsilon_N^{\gamma_T}.
\end{align*}
Together with \eqref{eq:ft-clipping-margin}, this proves
\eqref{eq:ft-unclipping-probability} on $\T^d$.

On $\R^d$, take $R\ge\max\{1,2a\}$ and $g_R\in C_b^1$ with
$0\le g_R\le1$, zero on $B_{R/2}$, equal to one outside $B_R$,
and satisfying $\|\nabla g_R\|_\infty\le C/R$.
By \eqref{eq:common-derived-f-envelope}, we have
\[
 \sup_{t\le T}\int g_R\rho_f(t)
 \le C_T\int_{|x|>R/2}\langle x\rangle^{-M}\,\dd x\le C_TR^{d-M}.
\]
For $|x|>2R$, the identity $\overline K^a=K$ outside $B_{2a}$ implies
\begin{equation}\label{eq:ft-exterior-field-bound}
 \begin{aligned}
 |\mathcal E_N(t,x)|
 &\le\frac{c_{d,s}s}{N}
        \sum_{|\widetilde X_i(t)|\le R}
                  |x-\widetilde X_i(t)|^{-s-1}
       +\frac{M_a}{N}\sum_{|\widetilde X_i(t)|>R}1\\
 &\le CR^{-s-1}+\frac{M_a}{N}\sum_i g_R(\widetilde X_i(t)).
 \end{aligned}
\end{equation}
Since $M>d$ and $s>0$, fix $R$ large enough that
$CR^{-s-1}+C_TM_aR^{d-M}\le B/2$.
This choice depends only on $a_T$, the bounds on $f$ and $B$.
It then follows from \eqref{eq:ft-exterior-field-bound} that
\[
 \sup_{t\le T,\,|x|>2R}|\mathcal E_N(t,x)|
 \le\frac B2+M_a\sup_{t\le T}|D_{g_R}(t)|.
\]
By \Cref{lem:ft-spatial-observables}, we obtain
\begin{equation}\label{eq:ft-exterior-field-cost}
 \begin{aligned}
 \Pp\left(\sup_{t\le T,\,|x|>2R}|\mathcal E_N(t,x)|>B\right)
 &\le\frac{4\max\{1,M_a\}^2}{B^2}
       \E\sup_{t\le T}|D_{g_R}(t)|^2\\
 &\le C_T\varepsilon_N^{\gamma_T}.
 \end{aligned}
\end{equation}
A finite cover of $B_{2R}$, with the calculation in
\eqref{eq:ft-covering-field-bound} restricted to that ball, yields
\begin{equation}\label{eq:ft-interior-field-cost}
 \Pp\left(\sup_{t\le T,\,|x|\le2R}
       |\mathcal E_N(t,x)-m_f^a(t,x)|>\frac12\right)
 \le C_T\varepsilon_N^{\gamma_T}.
\end{equation}
Equations \eqref{eq:ft-exterior-field-cost},
\eqref{eq:ft-interior-field-cost} and \eqref{eq:ft-clipping-margin}
prove \eqref{eq:ft-unclipping-probability}.

Couple the systems with the same initial configuration and Brownian
motions. On the event that the modified empirical field remains in
$\{|u|\le B\}$, \eqref{eq:ft-clipped-system} agrees with the
original equation. Pathwise uniqueness from \Cref{prop:common-finite-N} and
\Cref{lem:ft-approximation}\ref{item:ft-approximation-dynamics} implies
\begin{equation*}
 \{Z_N|_{[0,T]}\ne\widetilde Z_N|_{[0,T]}\}
 \subset
 \left\{\sup_{t\le T}\max_{i\le N}
       |\mathcal E_N(t,\widetilde X_i(t))|>B\right\}
 \quad\text{almost surely}.
\end{equation*}
By \eqref{eq:intro-TV-contraction} and
\eqref{eq:ft-unclipping-probability}, we obtain
\eqref{eq:ft-unclipping-path-TV}.
\end{proof}

\begin{proof}[Proof of \Cref{thm:fixed-time-riesz}]
Total variation contracts under time evaluation and marginalization.
Thus \Cref{lem:ft-unclipping,prop:ft-clipped-comparison} imply
\begin{equation*}
 \begin{aligned}
 \sup_{t\le T}\|G_{N,k}(t)-f_t^{\otimes k}\|_{\TV}
 &\le\sup_{t\le T}\|G_{N,k}(t)-\widetilde G_{N,k}(t)\|_{\TV}
      +\sup_{t\le T}\|\widetilde G_{N,k}(t)-f_t^{\otimes k}\|_{\TV}\\
 &\le C_T^k\varepsilon_N^{\gamma_T}
 \le C_T^kN^{-\gamma_T},
 \end{aligned}
\end{equation*}
after increasing $C_T$, since $\varepsilon_N=2/N$.
Total variation contraction and
\eqref{eq:common-riesz-conditioning-cost} yield
\begin{align*}
 \sup_{t\le T}\|F_{N,k}(t)-f_t^{\otimes k}\|_{\TV}
 &\le\|f_0^{\otimes N}-G_N^0\|_{\TV}
       +\sup_{t\le T}\|G_{N,k}(t)-f_t^{\otimes k}\|_{\TV}\\
 &\le C_T^k\bigl(N^{-(d/s-2)}+N^{-\gamma_T}\bigr).
\end{align*}
This completes the proof.
\end{proof}

\begin{proof}[Proof of \Cref{cor:global-coulomb}]
By the initial norm in \eqref{eq:whole-space-classical-data-norm} and
Leibniz's rule, we have
\begin{equation*}
 \sum_{|\xi|+|\eta|\le2}
 \left|\partial_x^\xi\partial_v^\eta
  \bigl(\langle v\rangle^8f_0(x,v)\bigr)\right|
 \le C\|f_0\|_{C^m_{M,a_0}}\langle x\rangle^{-M}\langle v\rangle^8e^{-a_0|v|^2}.
\end{equation*}
Since $M>3$ and $a_0>0$, the right-hand side belongs to
$L^1(\R^6)\cap L^\infty(\R^6)$. Therefore
\[
 f_0\in W^{2,1}(\R^6)\cap W^{2,\infty}(\R^6),\qquad
 \langle v\rangle^8f_0\in W^{2,1}(\R^6)\cap W^{2,\infty}(\R^6).
\]
The global classical theory of \cite{Bouchut1993}, in the formulation
of \cite[Theorem~1.1 and Remark~1.1, equations~(15)--(17)]{HuangLiuPickl2020}
with velocity weight exponent $8>6$ and derivative order two,
applies to $f_0$. For every finite $T$, the solution is nonnegative,
has unit mass, and satisfies
\[
 E_f\in C([0,T];W^{1,\infty}(\R^3)),\qquad
 \sup_{0\le t\le T}
 \|\langle v\rangle^8f_t\|_{W^{2,1}(\R^6)\cap W^{2,\infty}(\R^6)}
 <\infty.
\]
For $|\xi|\le2$ and $0\le t\le T$, we have
\begin{align*}
 \|\partial_x^\xi\rho_f(t)\|_1
             +\|\partial_x^\xi\rho_f(t)\|_\infty
 &\le\|\partial_x^\xi f_t\|_1
   +\left(\int_{\R^3}\langle v\rangle^{-8}\,\dd v\right)
             \|\langle v\rangle^8\partial_x^\xi f_t\|_\infty\\
 &\le C\|\langle v\rangle^8f_t\|_{W^{2,1}\cap W^{2,\infty}}
 \le C_T.
\end{align*}
Since $K\in L^1(B_1)$ and $K\in L^\infty(B_1^c)$, we have
\begin{equation}\label{eq:intro-global-field-regularity}
 \begin{aligned}
 \|\partial_x^\xi E_f(t)\|_\infty
 \le\|K\|_{L^1(B_1)}\|\partial_x^\xi\rho_f(t)\|_\infty
    +\|K\|_{L^\infty(B_1^c)}\|\partial_x^\xi\rho_f(t)\|_1
 \le C_T,\qquad |\xi|\le2.
 \end{aligned}
\end{equation}
Equation \eqref{eq:intro-global-field-regularity} implies
$E_f(t)\in W^{2,\infty}\subset C_b^1$.
The continuity in $W^{1,\infty}$ then yields
\eqref{eq:intro-field-regularity}. The velocity derivative satisfies
\begin{equation*}
 \int_0^T\|\nabla_vf(t)\|_1\,\dd t
 \le T\sup_{0\le t\le T}\|f(t)\|_{W^{1,1}}<\infty.
\end{equation*}
The initial norm implies \eqref{eq:intro-initial-decay} with
$C_0=\|f_0\|_{C^m_{M,a_0}}$.
Hence \Cref{ass:limiting-solution} holds on every finite interval.
Apply \Cref{thm:fixed-time-riesz} with $d=3$ and $s=1$.
Since $q>3$ and $\gamma_T\le q^{-1}<1$, the conditioning error
$N^{-1}$ is bounded by $N^{-\gamma_T}$, which proves
\eqref{eq:intro-global-coulomb}.
\end{proof}

\section{Particle dynamics and force regularization}
\label{sec:kernel-verification}

We recall the force regularization of \cite[Section~4]{JiangWuShortTime}
and prove well-posedness and convergence for particle systems with
bounded fields depending on all particle positions.

\subsection{Force regularization}
\label{subsec:whole-space-kernel-class}

\begin{lemma}[{\cite[Lemma~4.1]{JiangWuShortTime}}]
\label{lem:common-riesz-regularization}
Let $d\ge2$, $0<s<d-1$, and $\mathcal D\in\{\T^d,\R^d\}$.
Let $\phi=\phi_{\mathcal D,s}$,
$K=-\nabla\phi$ be as in the introduction.  There exist
$\varepsilon_0\in(0,1)$, $r_0>0$, and smooth even potentials
$\phi_\varepsilon\ge1$
such that
\begin{equation*}
 \phi_\varepsilon=\phi\ \text{when }d_{\mathcal D}(x,0)\ge2\varepsilon,
 \quad 0\le\phi_\varepsilon\le C\phi,
 \quad K_\varepsilon=-\nabla\phi_\varepsilon,
 \quad \operatorname{supp}(K_\varepsilon-K)
 \subset\{d_{\mathcal D}(x,0)\le2\varepsilon\}.
\end{equation*}
Here $d_{\mathcal D}$ is geodesic distance on the torus and Euclidean
distance on $\R^d$.  The constants are independent of
$0<\varepsilon<\varepsilon_0$.  The forces satisfy
\begin{equation*}
 |K_\varepsilon(x)|+|K(x)|\le C d_{\mathcal D}(x,0)^{-s-1}
 \quad(0<d_{\mathcal D}(x,0)<r_0).
\end{equation*}
For $1\le p<d/(s+1)$, we have
\begin{equation*}
 \|K_\varepsilon-K\|_{L^p(\mathcal D)}
 \le C_p\varepsilon^{d/p-(s+1)}.
\end{equation*}
On the torus,
$\sup_{0<\varepsilon<\varepsilon_0}\|K_\varepsilon\|_{L^p}<\infty$ in this range.
On $\R^d$, take $\varpi(x)=\langle x\rangle^\alpha$, $\alpha>0$,
in \eqref{eq:common-kernel-norm}.
If $q>1$ satisfies $(s+1)q'<d$, then
\begin{equation*}
 \|K_\varepsilon-K\|_{q,\varpi}
 \le C\varepsilon^{d/q'-(s+1)}\longrightarrow0.
\end{equation*}
If in addition $\alpha>d(q-1)$, then
\begin{equation*}
 \sup_{0<\varepsilon<\varepsilon_0}\|K_\varepsilon\|_{q,\varpi}<\infty.
\end{equation*}
For each fixed $\varepsilon$, $K_\varepsilon$ is smooth and globally
Lipschitz.
\end{lemma}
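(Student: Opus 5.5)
The plan is to build $\phi_\varepsilon$ by modifying only the singular radial profile near the origin with a rescaled smooth profile. All estimates then reduce to radial integrals of $r^{-s-1}$ over $\{r\le 2\varepsilon\}$ or to integrability of the spatial weight.

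\emph{Construction.} Fix a smooth nonincreasing $\psi:[0,\infty)\to(0,\infty)$ with:
\begin{itemize}
\item $\psi(r)=r^{-s}$ for $r\ge2$;
\item $\psi(r)\le r^{-s}$ and $\psi(r)\ge 2^{-s}$ for all $r$;
\item $|\psi'(r)|\le C r^{-s-1}$;
\item $\psi$ constant near $r=0$.
\end{itemize}
Set $\psi_\varepsilon(r)=\varepsilon^{-s}\psi(r/\varepsilon)$. Then $\psi_\varepsilon(r)=r^{-s}$ for $r\ge 2\varepsilon$, $\psi_\varepsilon\le r^{-s}$, $\psi_\varepsilon\ge(2\varepsilon)^{-s}$, and $|\psi_\varepsilon'(r)|\le Cr^{-s-1}$ uniformly in $\varepsilon$.

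On $\R^d$, put $\phi_\varepsilon(x)=1+c_{d,s}\psi_\varepsilon(|x|)$. On $\T^d$, choose $\varepsilon_0$ so that $B_{2\varepsilon_0}$ lies well inside the chart of \eqref{eq:intro-periodic-riesz-potential}, and put $\phi_\varepsilon=c_{d,s}\psi_\varepsilon(|x|)+H_{\T,s}$ in the chart and $\phi_\varepsilon=\phi$ outside $B_{2\varepsilon}$.

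The structural properties follow directly:
\begin{itemize}
\item Evenness holds because $\psi_\varepsilon(|x|)$ is radial and $H_{\T,s}$ is even.
\item Smoothness holds because $\psi$ is constant near $0$.
\item $\phi_\varepsilon=\phi$ for $d_{\mathcal D}(x,0)\ge2\varepsilon$ by construction, so $\operatorname{supp}(K_\varepsilon-K)\subset\{d_{\mathcal D}(x,0)\le2\varepsilon\}$.
\item $\phi_\varepsilon\ge1$: this is clear on $\R^d$. On the torus, $c(2\varepsilon)^{-s}-\|H_{\T,s}\|_{L^\infty(B_{2\varepsilon_0})}\ge1$ once $\varepsilon_0$ is small.
\item $\phi_\varepsilon\le C\phi$: this follows from $\psi_\varepsilon\le r^{-s}$, boundedness of $H_{\T,s}$ on the chart, and $\phi\ge1$.
\end{itemize}
For the force, $|K_\varepsilon(x)|\le c\,|\psi_\varepsilon'(|x|)|+|\nabla H_{\T,s}|\le C|x|^{-s-1}$ for $|x|<r_0$, and the same bound for $K$ is immediate. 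Global Lipschitz continuity at fixed $\varepsilon$ follows because $D^2\phi_\varepsilon$ is bounded: near the origin $\phi_\varepsilon$ is smooth, and away from it $D^2\phi$ is bounded on the compact torus and decays like $|x|^{-s-2}$ on $\R^d$.

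\emph{$L^p$ and weighted estimates.} Since $K_\varepsilon-K$ is supported in $B_{2\varepsilon}$ and bounded by $C|x|^{-s-1}$ there, radial integration gives
\[
\|K_\varepsilon-K\|_p^p\le C\int_0^{2\varepsilon}r^{d-1-(s+1)p}\,\dd r=C\varepsilon^{d-(s+1)p}
\]
for $p<d/(s+1)$. On the torus, the uniform $L^p$ bound follows from the pointwise bound near $0$ and the boundedness of $K$ elsewhere.

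For the weighted norm \eqref{eq:common-kernel-norm} with $\varpi\ge1$, use $\varpi^{-1/(q-1)}\le1$. This gives
\[
\|K_\varepsilon-K\|_{q,\varpi}\le\|K_\varepsilon-K\|_{q'}\le C\varepsilon^{d/q'-(s+1)},
\]
which tends to $0$ because $(s+1)q'<d$.

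For the uniform bound on $\|K_\varepsilon\|_{q,\varpi}$, fix $x$ and split the $y$-integral into two regions:
\begin{itemize}
\item On $|x-y|<r_0$, bound $\varpi^{-1/(q-1)}\le1$ and integrate $C|x-y|^{-(s+1)q'}$, which is finite uniformly in $x$ and $\varepsilon$.
\item On $|x-y|\ge r_0$, $K_\varepsilon=K$ is bounded by $Cr_0^{-s-1}$, and $\int\langle y\rangle^{-\alpha/(q-1)}\dd y<\infty$ because $\alpha>d(q-1)$.
\end{itemize}

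\emph{Main obstacle.} The only delicate point is the torus normalization $\phi_\varepsilon\ge1$ together with $\phi_\varepsilon\le C\phi$. These require that the replacement profile stays large, of size at least $(2\varepsilon)^{-s}$, and that $\varepsilon_0$ is chosen small relative to the chart and to $\|H_{\T,s}\|_\infty$. The profile is built with this lower bound, so choosing $\varepsilon_0$ small enough completes the argument.
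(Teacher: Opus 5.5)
Your proof is correct. The paper only quotes this lemma from \cite[Lemma~4.1]{JiangWuShortTime}, but your rescaled radial profile is the same device it uses for $\widetilde\phi^{a,h}$ in Appendix~\ref{app:fixed-N-passage}, and the concrete choice $\psi(r)=s\int_r^\infty\theta(t)t^{-s-1}\,\dd t$ (with $\theta$ smooth, zero on $[0,1]$, one on $[2,\infty)$) meets all your requirements and even gives $\phi_\varepsilon\le\phi$.

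The only slip is that $\psi\ge2^{-s}$ cannot hold for all $r$ when $\psi(r)=r^{-s}$ on $(2,\infty)$. It should be stated on $[0,2]$, where it follows from monotonicity. Likewise $\psi_\varepsilon\ge(2\varepsilon)^{-s}$ holds only on $[0,2\varepsilon]$, which is all you use for $\phi_\varepsilon\ge1$ on the torus.
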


\subsection{Dynamics with a configuration-dependent field}
\label{sec:whole-space-dynamics}

\begin{proposition}\label{prop:common-finite-N}
Let $N\ge k\ge1$, $T>0$, and
$b_i\in C([0,T];C_b^1(\mathcal D^k;\R^d))$. Let $\psi$ be an even,
nonnegative potential, smooth away from zero, with
$\psi(x)\to\infty$ as $d_{\mathcal D}(x,0)\downarrow0$. Suppose that
$K^\psi=-\nabla\psi$ is bounded and Lipschitz on every set
$\{d_{\mathcal D}(x,0)\ge r\}$, $r>0$.
For every deterministic collision-free initial configuration, the system
\begin{equation*}
 \dd X_i=V_i\,\dd t,\qquad
 \dd V_i=\left(\frac1N\sum_{\substack{1\le j\le k\\j\ne i}}K^\psi(X_i-X_j)
                         +b_i(t,X_k)\right)\dd t
                         +\sqrt{2\sigma}\,\dd B_i,
 \qquad 1\le i\le k,
\end{equation*}
has a pathwise unique strong solution on $[0,T]$, with distinct
positions throughout this interval. The same conclusion holds for every
initial law supported on collision-free configurations and independent
of the Brownian motions. The transition
operators preserve mass and contract total variation. In particular,
$k=N$, $\psi=\phi_{\mathcal D,s}$ and $b_i=0$ give the original
particle system.
\end{proposition}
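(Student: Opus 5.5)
The proof proceeds by localization away from collisions, combined with a Lyapunov function that rules out collisions and blow-up in finite time.

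\emph{Step 1 (truncation).} For $r\in(0,1)$ and $n\ge1$, replace $K^\psi$ by a globally Lipschitz bounded field $K^\psi_r$ that agrees with $K^\psi$ on $\{d_{\mathcal D}(x,0)\ge r\}$. This is possible because $K^\psi$ is bounded and Lipschitz on that set; for example, use $-\nabla\psi_r$ with $\psi_r$ a smooth even modification of $\psi$ inside $B_r$, as in \Cref{lem:common-riesz-regularization}. The drift of the truncated system,
\[
 (V_i,\ N^{-1}\textstyle\sum_{j\ne i}K^\psi_r(X_i-X_j)+b_i(t,X_k)),
\]
is globally Lipschitz in $Z_k$, uniformly in $t$, since $b_i\in C([0,T];C_b^1)$. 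Standard SDE theory then gives a pathwise unique strong solution $Z^{(r)}$ on $[0,T]$. Let $\tau_r$ be the first time some pair satisfies $d_{\mathcal D}(X_i,X_j)\le r$. By pathwise uniqueness, the solutions for $r'<r$ coincide up to $\tau_r$. This consistency defines a solution $Z$ up to $\tau=\lim_{r\downarrow0}\tau_r$, and the solution is unique on $[0,\tau)$.

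\emph{Step 2 (no collision).} I would use the energy
\[
 \mathcal H(Z_k)=\sum_i\tfrac12|v_i|^2+\frac1{2N}\sum_{i\ne j}\psi(x_i-x_j)\ge0.
\]
By Itô's formula on $[0,\tau_r]$, the interaction terms cancel because $K^\psi=-\nabla\psi$ and $\psi$ is even (this is the identity $L_{N,k}e_{N,k}=0$). What remains is
\[
 \dd\mathcal H=\sum_iV_i\cdot b_i\,\dd t+d\sigma k\,\dd t+\dd M_t.
\]
The drift is bounded by $\|b\|_\infty\sum_i|V_i|+d\sigma k\le C(1+\mathcal H)$. Localizing the martingale and applying Gronwall gives
\[
 \E\,\mathcal H(Z_{t\wedge\tau_r})\le(\mathcal H(Z_0)+Ct)e^{Ct},
\]
uniformly in $r$. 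On $\{\tau_r\le T\}$ we have $\mathcal H(Z_{\tau_r})\ge N^{-1}\inf_{d(x,0)\le r}\psi$, and this infimum tends to $\infty$ as $r\downarrow0$, since $\psi(x)\to\infty$ and $\psi$ is smooth away from zero. Markov's inequality then gives $\Pp(\tau_r\le T)\to0$. Hence $\tau>T$ almost surely, and positions remain distinct on $[0,T]$.

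On $\R^d$, nonexplosion of positions follows from the same bound on velocities, so no truncation at infinity is needed. On $\T^d$ there is nothing to check.

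\emph{Step 3 (random initial law and Markov properties).} For an initial law supported on collision-free configurations and independent of the Brownian motions, I would condition on $Z_0$, or work on $\{\mathcal H(Z_0)\le n\}$ and let $n\to\infty$. The event $\{\tau\le T\}$ has conditional probability zero for each deterministic start, so it has probability zero. Pathwise uniqueness and measurable dependence on $Z_0$ (Yamada–Watanabe, applied to the truncated systems) give a Markov kernel $P_t$. This kernel preserves mass because it is the law of a nonexploding process. It contracts total variation by \eqref{eq:intro-TV-contraction}, being a Markov kernel. The special case $k=N$, $\psi=\phi_{\mathcal D,s}$, $b_i=0$ satisfies all the hypotheses: $\phi\ge1$ is smooth away from $0$ and blows up there, and $K$ is bounded and Lipschitz away from $0$.

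\emph{Main obstacle.} The delicate point is Step 2: verifying the exact cancellation in Itô's formula and that the Lyapunov bound is uniform in the truncation radius. I would run the Itô computation on the truncated system only up to $\tau_r$, where the truncated and true potentials agree. The martingale term $\sqrt{2\sigma}\sum_iV_i\cdot\dd B_i$ requires localization by $|V|\le n$ before taking expectations; I would then pass $n\to\infty$ by Fatou's lemma.
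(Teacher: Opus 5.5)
Your proposal is correct and follows essentially the paper's argument. Both use the same energy with the internal forces cancelling in It\^o's formula, the bound $|\sum_iV_i\cdot b_i|\le\mathcal H+kL_T^2/2$, Gronwall and Markov's inequality on stopped processes, and conditioning on the initial configuration for random data. The only difference is the localization: the paper stops at the energy level $\tau_L$, before which the trajectory stays in a compact subset of the collision-free phase space. You instead stop at minimal pair distance $r$ for a globally Lipschitz truncation, and use $\psi\to\infty$ to turn the uniform energy bound into $\Pp(\tau_r\le T)\to0$.
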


\begin{proof}
Local existence and pathwise uniqueness hold away from the collision
diagonals. As in \cite[Proposition~4.2]{JiangWuShortTime}, we estimate
the particle energy. Set
\begin{equation*}
 \mathcal H_{N,k}
 =\frac12\sum_{i=1}^k|V_i|^2
        +\frac1N\sum_{i<j\le k}\psi(X_i-X_j),
 \qquad L_T=\max_i\|b_i\|_{L^\infty([0,T]\times\mathcal D^k)}.
\end{equation*}
The internal forces cancel in It\^o's formula, so
\begin{equation}\label{eq:common-forced-energy-ito}
 \dd\mathcal H_{N,k}
 =\left(d\sigma k+\sum_iV_i\cdot b_i(t,X_k)\right)\dd t
                     +\sqrt{2\sigma}\sum_iV_i\cdot\dd B_i,
 \qquad
 \left|\sum_iV_i\cdot b_i\right|
 \le\mathcal H_{N,k}+\frac{kL_T^2}{2}.
\end{equation}
Let $\tau_L=\inf\{t:\mathcal H_{N,k}(t)\ge L\}$. Before $\tau_L$, we have
\begin{equation*}
 |V_i|\le\sqrt{2L},\qquad
 \psi(X_i-X_j)\le NL,\qquad
 |X_i(t)-X_i(0)|\le T\sqrt{2L},
\end{equation*}
where the last estimate concerns $\R^d$. Hence the trajectory remains
in a compact subset of the collision-free phase space. For deterministic
data, choose $L>\mathcal H_{N,k}(0)$. The stopped martingale has
quadratic variation at most $4\sigma LT$. For $t\le T$, taking
expectations in \eqref{eq:common-forced-energy-ito} and applying
Gronwall's inequality yields
\begin{align}\label{eq:common-stopped-energy}
 \E\mathcal H_{N,k}(t\wedge\tau_L)
 &\le\E\mathcal H_{N,k}(0)
       +k\left(d\sigma+\frac{L_T^2}{2}\right)t
       +\int_0^t\E\mathcal H_{N,k}(u\wedge\tau_L)\,\dd u\notag\\
 &\le e^t\E\mathcal H_{N,k}(0)
       +k\left(d\sigma+\frac{L_T^2}{2}\right)(e^t-1),\notag\\
 \Pp(\tau_L\le T)
 &\le\frac1L\E\mathcal H_{N,k}(T\wedge\tau_L)
 \le\frac{C_T}{L}\bigl(\E\mathcal H_{N,k}(0)+k\bigr).
\end{align}
Letting $L\to\infty$ proves continuation and noncollision for
deterministic collision-free data. For an arbitrary initial law $\mu_k$
supported on collision-free configurations, condition
\eqref{eq:common-stopped-energy} on the initial configuration.
For $0<R<L$, we obtain
\begin{equation*}
 \Pp_{\mu_k}(\tau_L\le T)
 \le\mu_k\{\mathcal H_{N,k}(0)>R\}
       +\frac{C_T(R+k)}{L}.
\end{equation*}
First let $L\to\infty$, then $R\to\infty$.
Since $\mathcal H_{N,k}(0)<\infty$ almost surely, the explosion
probability is zero.
Pathwise uniqueness yields a time-inhomogeneous Markov transition family,
whose conservativity ensures mass preservation and total variation contraction.
\end{proof}

\begin{proposition}\label{prop:common-regularization-removal}
Under the hypotheses of \Cref{prop:common-finite-N}, let $K^{\psi,h}$
be smooth globally Lipschitz forces agreeing with $K^\psi$ outside
$B_{2h}$. Let $\mu_k$ be an initial law supported on collision-free
configurations. Couple the regularized and singular systems with the
same initial configuration of law $\mu_k$ and the same Brownian motions,
taking the initial configuration independently of the Brownian motions.
Keep the field $b_i$ unchanged.
Then
\begin{equation}\label{eq:whole-space-path-TV}
 \|\operatorname{Law}(Z_k^h|_{[0,T]})
       -\operatorname{Law}(Z_k|_{[0,T]})\|_{\TV}
 \le2\Pp_{\mu_k}(d_T\le2h)\longrightarrow0,
 \qquad
 d_T=\min_{t\le T}\min_{i<j}d_{\mathcal D}(X_i(t),X_j(t)),
\end{equation}
with $d_T=\infty$ for $k=1$. If $\mu_k$ has density $g$, the forward
laws have densities $F_k^h,F_k\in C([0,T];L^1)$ and
\begin{equation}\label{eq:whole-space-regularization-removal-L1}
 \sup_{t\le T}\|F_k^h(t)-F_k(t)\|_1\longrightarrow0.
\end{equation}
\end{proposition}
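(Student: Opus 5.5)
The plan is to couple the two systems synchronously and show that they can differ on $[0,T]$ only when the singular system brings two particles within distance $2h$; the $L^1$ statement then follows from total variation contraction and the decay of that collision probability.

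\textbf{Step 1: pathwise coincidence before near-collisions.} On a common probability space, take the initial configuration $Z_k(0)$ with law $\mu_k$, independent of the Brownian motions. Let $Z_k$ be the singular solution from \Cref{prop:common-finite-N}, which is pathwise unique and collision-free. Let $Z_k^h$ be the regularized solution. Since $K^{\psi,h}$ is globally Lipschitz and $b_i\in C([0,T];C_b^1)$, the regularized system has a pathwise unique strong solution by standard SDE theory. Define the stopping time
\[
 \tau_h=\inf\Bigl\{t\le T:\min_{i<j}d_{\mathcal D}\bigl(X_i(t),X_j(t)\bigr)\le2h\Bigr\}.
\]
On $[0,\tau_h)$ all pairwise distances exceed $2h$, so $K^{\psi,h}(X_i-X_j)=K^\psi(X_i-X_j)$ along $Z_k$. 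Hence $Z_k$ solves the regularized equation up to $\tau_h$. Pathwise uniqueness for the Lipschitz system, applied to the stopped processes, gives $Z_k^h=Z_k$ on $[0,\tau_h]$ almost surely. Consequently
\[
 \{Z_k^h|_{[0,T]}\ne Z_k|_{[0,T]}\}\subset\{\tau_h\le T\}=\{d_T\le2h\},
\]
and \eqref{eq:intro-TV-contraction} yields the first inequality in \eqref{eq:whole-space-path-TV}. For $k=1$ there is no interaction, the two systems coincide, and the bound is trivial.

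\textbf{Step 2: the collision probability vanishes.} Since $Z_k$ has continuous paths and, by \Cref{prop:common-finite-N}, distinct positions on the compact interval $[0,T]$, we have $d_T>0$ almost surely. The events $\{d_T\le2h\}$ decrease as $h\downarrow0$ to $\{d_T=0\}$, which is a null set. By continuity of measure, $\Pp_{\mu_k}(d_T\le2h)\to0$. No quantitative rate is needed.

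\textbf{Step 3: densities and the $L^1$ convergence.} If $\mu_k$ has density $g$, I would obtain time-marginal densities as follows. The regularized process has nondegenerate velocity noise, bounded measurable drift, and a density at time zero. By Markov transport of an absolutely continuous initial law, the law at each time is absolutely continuous: $\Pp(Z^h_k(t)\in A)$ is controlled through the transition kernel applied to $g$. I would justify this by approximating $g$ by bounded compactly supported densities and using the Girsanov comparison with free kinetic Brownian motion on the event that the drift stays bounded. For the singular law, Step 1 gives $|\Pp(Z_k(t)\in A)-\Pp(Z_k^h(t)\in A)|\le\Pp(d_T\le2h)$. Absolute continuity then passes to $F_k(t)$, since a Lebesgue-null set $A$ has probability at most $\Pp(d_T\le2h)$ for every $h$, and this tends to zero. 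The same estimate, taken uniformly in $t$, gives
\[
 \sup_{t\le T}\|F_k^h(t)-F_k(t)\|_1\le2\Pp(d_T\le2h)\to0.
\]
Continuity in $L^1$ of $F_k^h$ follows from continuity of paths together with standard $L^1$ continuity of Fokker--Planck flows with smooth coefficients. Continuity of $F_k$ then follows from the uniform limit.

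\textbf{Main obstacle.} The delicate point is absolute continuity of the time marginals under only bounded measurable forcing in the regularized system, together with the continuity of $t\mapsto F_k^h(t)$ in $L^1$. I would first try to settle it by Girsanov's theorem: the drift is bounded for fixed $h$, so the law is equivalent to that of the driftless kinetic process started from $g$, which has a density and is $L^1$-continuous in time.
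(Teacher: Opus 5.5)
Your proposal is correct. Steps 1 and 2 match the paper's argument: synchronous coupling, agreement of the coefficients away from $\{\min_{i<j}d_{\mathcal D}(x_i,x_j)\le2h\}$, pathwise uniqueness, and $d_T>0$ almost surely. The paper stops when either process enters this set; stopping only the singular process, as you do, also works. Step 3 takes a different route and has one step you have not actually justified.

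\textbf{The paper's route.} The paper subtracts the Brownian velocity path. For each Brownian path the regularized system becomes a random ODE with a globally Lipschitz $C^1$ vector field. That field is divergence-free in phase space because the acceleration does not depend on velocity, so the flow $\Phi_t^h$ preserves volume (see \eqref{eq:common-flow-jacobian}). Hence $F_k^h(t)=\E[g\circ(\Phi_t^h)^{-1}]$ with $\|g\circ(\Phi_t^h)^{-1}\|_1=\|g\|_1$. This gives absolute continuity, an $L^1$ isometry and, after approximating $g$ by $C_c$ functions, $F_k^h\in C([0,T];L^1)$ all at once. The volume preservation is reused in \Cref{lem:common-L1-stability}. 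The paper then identifies $F_k$ as the $C_tL^1$ limit of the Cauchy family $F_k^h$.

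\textbf{What your route buys.} Your Girsanov comparison with the free kinetic process needs only a bounded drift, not a divergence-free one. The regularized drift is globally bounded here, since $K^{\psi,h}=K^\psi$ off $B_{2h}$, so no restriction to an event is needed. Your transfer of absolute continuity to $F_k(t)$ through Lebesgue-null sets is correct.

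\textbf{What is missing.} The $L^1$ continuity in time of $F_k^h$ is part of the conclusion, and your argument does not establish it. Path continuity gives only weak continuity of the marginals. Equivalence of the path laws does not immediately transfer the $L^1$ continuity of the free marginals; an extra step is needed. Also, $b_i$ is only in $C([0,T];C_b^1)$, so an appeal to smooth Fokker--Planck theory does not literally apply.

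\textbf{How to close it within your framework.} Let $\mathcal Z_u$ be the Girsanov density of the regularized path law with respect to the free law $Q$ on $\mathcal F_u$, and let $P^0_u$ be the free kinetic semigroup. For $r\le u$, the Markov property under $Q$ gives $\int_A(P^0_{u-r})^*F_k^h(r)=\E_Q[\mathcal Z_r\ind_A(Z_u)]$. Since also $\int_AF_k^h(u)=\E_Q[\mathcal Z_u\ind_A(Z_u)]$, you get
\[
 \|F_k^h(u)-(P^0_{u-r})^*F_k^h(r)\|_1\le\E_Q|\mathcal Z_u-\mathcal Z_r|.
\]
Now choose $r$ slightly below the time of interest. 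The $L^1(Q)$ continuity of $u\mapsto\mathcal Z_u$, together with strong continuity of the free kinetic semigroup on $L^1$, then yields $F_k^h\in C([0,T];L^1)$. Continuity of $F_k$ follows from your uniform estimate, as you say.
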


\begin{proof}
Use the synchronous coupling of \cite[Proposition~4.3]{JiangWuShortTime}.
Let $\widehat\tau_h$ be the first time that either process enters
$\{\min_{i<j}d_{\mathcal D}(x_i,x_j)\le2h\}$. Equality of the
coefficients outside this set, pathwise uniqueness, and continuity imply
\begin{equation*}
 Z_k^h(t)=Z_k(t)\quad(0\le t\le T\wedge\widehat\tau_h),\qquad
 \Pp(Z_k^h|_{[0,T]}\ne Z_k|_{[0,T]})
          \le\Pp(d_T\le2h).
\end{equation*}
By \Cref{prop:common-finite-N}, $d_T>0$ almost surely. This proves
\eqref{eq:whole-space-path-TV}.

For fixed $h$, subtracting the Brownian velocity path yields a globally
Lipschitz $C^1$ flow. Since the configuration field is independent of
velocity, the resulting stochastic flow $\Phi_t^h$ satisfies
\begin{equation}\label{eq:common-flow-jacobian}
\begin{aligned}
 \frac{\dd}{\dd t}\det D\Phi_t^h
 &=\left[\sum_i\nabla_{x_i}\cdot v_i+
 \sum_i\nabla_{v_i}\cdot\left(
     \frac1N\sum_{j\ne i}K^{\psi,h}(x_i-x_j)+b_i(t,X_k)\right)\right]
       (\Phi_t^h)\det D\Phi_t^h=0,\\
 \det D\Phi_0^h&=1.
\end{aligned}
\end{equation}
Thus $\det D\Phi_t^h=1$, and the density is given by
\begin{equation}\label{eq:common-flow-density}
 F_k^h(t)=\E[g\circ(\Phi_t^h)^{-1}],\qquad
 \|g\circ(\Phi_t^h)^{-1}\|_1=\|g\|_1.
\end{equation}
The $C_c$ approximation argument in
\cite[Proposition~4.3]{JiangWuShortTime}, applied to
\eqref{eq:common-flow-density}, proves $F_k^h\in C_tL^1$ for every
$g\in L^1$. By \eqref{eq:whole-space-path-TV} and contraction under
evaluation, we have
\begin{equation*}
 \sup_{t\le T}\|F_k^h(t)-F_k^{h'}(t)\|_1
 \le2\Pp_{\mu_k}(d_T\le2h)+2\Pp_{\mu_k}(d_T\le2h')
 \longrightarrow0.
\end{equation*}
The limit belongs to $C([0,T];L^1)$ and has the law of the singular
process at every time, proving
\eqref{eq:whole-space-regularization-removal-L1}.
\end{proof}

\begin{remark}\label{rem:general-particle-potential}
The preceding results apply to the full and near-field potentials used in
Section~\ref{sec:fixed-time-riesz}. Their constants in the stopped
energy estimate depend on the field only through $L_T$.
\end{remark}

\appendix
\section{Approximation at fixed particle number}
\label{app:fixed-N-passage}

We use the conditioned initial density $G_N^0$ from
\eqref{eq:comparison-initial-envelope} and the exponents fixed in
Section~\ref{sec:common-comparison}. Following
\cite[Appendix~A]{JiangWuShortTime}, we take all limits at fixed $N$
and, for the modified system, at fixed splitting radius $a$.
We first remove the phase-space cutoff, then remove the smoothing
of the prescribed field, and finally take the initial-data and force
limits in that order. The pointwise bounds with prefactor $C_T^N$,
obtained from \Cref{lem:common-exponential-envelope}, provide
integrable majorants for these fixed-$N$ limits. Together with the
dynamics constructed in Section~\ref{sec:kernel-verification}, they
justify the energy tests and convergence used in
Section~\ref{sec:fixed-time-riesz}. That section derives the scalar
inequalities for the regularized equations and proves the bounds
with prefactor $C_T^k$ that are uniform in the particle number.

\subsection{Initial data and comparison}
\label{subsec:common-smooth-data}

\begin{lemma}[{\cite[Lemma~A.1]{JiangWuShortTime}}]
\label{lem:common-smooth-initial-data}
Assume \eqref{eq:comparison-initial-envelope} and the exponent choices
$\beta_0<\beta_1$ of Section~\ref{sec:common-comparison}.
For fixed $N,\varepsilon$, there are smooth symmetric probability
densities $G_N^{0,\varepsilon,n}$, compactly supported in the noncompact
variables, such that
\begin{equation*}
 \xi_{n,\varepsilon}:=\|G_N^{0,\varepsilon,n}-G_N^0\|_1\longrightarrow0,
 \qquad
 G_{N,k}^{0,\varepsilon,n}
 \le A_2^ke^{-\beta_0e_{N,k}^\varepsilon}
               \prod_{j=1}^k\langle x_j\rangle^{-M_2}.
\end{equation*}
The spatial factors are omitted on $\T^d$, and $A_2$ is independent
of $N,k,n,\varepsilon$.
\end{lemma}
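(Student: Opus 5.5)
We construct the approximations by truncation followed by mollification of $G_N^0$, and then check that the pointwise envelope survives both operations. Fix $N$ and $\varepsilon$. For $n\ge1$ choose a smooth symmetric cutoff $\zeta_n(Z_N)=\prod_{i=1}^N\zeta(|x_i|/n)\zeta(|v_i|/n)$, with the spatial factor omitted on $\T^d$. Set $g_n=\zeta_nG_N^0/\|\zeta_nG_N^0\|_1$, which is symmetric and compactly supported in the noncompact variables. Then put $G_N^{0,\varepsilon,n}=g_n*\theta_{\delta_n}$, where $\theta_\delta$ is a product mollifier in all $2dN$ variables, symmetric under permutations of the particle labels, and $\delta_n\downarrow0$. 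Symmetry and compact support are preserved. The $L^1$ convergence $\xi_{n,\varepsilon}\to0$ follows from dominated convergence, $\|\zeta_nG_N^0\|_1\to1$, and the continuity of mollification in $L^1$. Since the marginal of a mollification equals the mollification of the marginal with a $k$-particle mollifier, it suffices to bound $(g_n)_k*\theta_{\delta_n}$.

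The marginal of $g_n$ is bounded by $p_n^{-1}G_{N,k}^0$, where $p_n=\|\zeta_nG_N^0\|_1\ge1/2$ for large $n$. By \eqref{eq:comparison-initial-envelope} and $e^\varepsilon_{N,k}\le C_1e_{N,k}$ (more precisely, the choice $\beta_1=\beta_2/(2C_1)$ in \eqref{eq:comparison-initial-regularized-envelope}), this gives $(g_n)_k\le 2A^ke^{-\beta_1e^\varepsilon_{N,k}}\prod_j\langle x_j\rangle^{-M_2}$. The remaining task is to pass this envelope through the mollification at the cost of lowering $\beta_1$ to $\beta_0=\beta_1/2$ and enlarging $A$ to some $A_2$, with $A_2$ independent of $N,k,n,\varepsilon$.

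This transfer is the main obstacle. The energy $e^\varepsilon_{N,k}$ contains $\phi_\varepsilon$, which is smooth for fixed $\varepsilon$ but has Lipschitz constant of order $\varepsilon^{-s-1}$. The velocity part and the factors $\langle x\rangle^{-M_2}$ are harmless: $|v+w|^2\ge|v|^2/2-|w|^2$ for $|w|\le\delta$, and $\langle x+y\rangle^{-M_2}\le2^{M_2}\langle x\rangle^{-M_2}$ for $|y|\le1$. Each contributes a factor $C^k$ per particle. For the interaction term I would choose $\delta_n$ small depending on $N$ and $\varepsilon$ (allowed, since $N$ and $\varepsilon$ are fixed). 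We need
\[
\frac1N\sum_{i\ne j\le k}\bigl|\phi_\varepsilon(x_i-x_j+y_i-y_j)-\phi_\varepsilon(x_i-x_j)\bigr|\le k
\]
for $|y|\le\delta_n$. This holds once $\delta_n\le(2\,\mathrm{Lip}\,\phi_\varepsilon)^{-1}$, since the double sum has at most $k(k-1)$ terms, each bounded by $2\delta_n\mathrm{Lip}\,\phi_\varepsilon$, and $(k-1)/N\le1$. The interaction therefore costs only $e^{\beta_1k}$. Together these give
\[
G_{N,k}^{0,\varepsilon,n}\le(2AC e^{\beta_1}2^{M_2})^ke^{-\beta_1e^\varepsilon_{N,k}/2}\prod_j\langle x_j\rangle^{-M_2},
\]
up to the harmless $e^{\beta_1k}$ arising from $|v|^2/2$. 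Since $\beta_0\le\beta_1/2$, this is the claimed bound with $A_2=2AC2^{M_2}e^{2\beta_1}$, which does not depend on $N,k,n,\varepsilon$.

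One point needs care. If instead the constants had to be uniform in $\delta$, the Lipschitz bound would fail near the diagonal. Because the lemma fixes $N$ and $\varepsilon$ before choosing $n$, the $N$- and $\varepsilon$-dependent smallness of $\delta_n$ is legitimate. This matches the order of limits fixed in the paper, where $n\to\infty$ is taken before $h\downarrow0$.
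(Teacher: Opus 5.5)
Your proposal is correct and follows essentially the same route as the paper, namely \cite[Lemma~A.1]{JiangWuShortTime} as spelled out in \Cref{rem:smoothing-general-potential}: truncate and renormalize at a cost of a factor $2$, then mollify with radii satisfying $2\|\nabla\phi_\varepsilon\|_\infty\delta_n\le1$, so that the shift in the interaction energy costs only $e^{\beta k}$ and the spatial weight costs $2^{M_2}$ per particle. The only difference is in the velocity shift: the paper uses $\beta_0|v|^2-\beta_1|v-w|^2\le\beta_0\beta_1/(\beta_1-\beta_0)$, which is valid for any $\beta_0<\beta_1$, whereas your halving argument relies on $\beta_0=\beta_1/2$; this equality does hold for the exponents fixed in Section~\ref{sec:common-comparison}.
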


\begin{proof}
Apply \cite[Lemma~A.1]{JiangWuShortTime} with spatial exponent $M_2$
and initial bound \eqref{eq:comparison-initial-regularized-envelope}.
The constant in that lemma depends only on $A,M_2,\beta_0,\beta_1$.
\end{proof}

\begin{remark}\label{rem:smoothing-general-potential}
The same conclusion holds for a smooth nonnegative pair potential
$\psi$ with bounded gradient when the initial bound holds with
$e_{N,k}^{\psi}$ and exponent $\beta_1$. Indeed, choose the convolution
radii in \cite[Lemma~A.1]{JiangWuShortTime} so that
\[
 0<\delta_n\le n^{-1},\qquad
 2\|\nabla\psi\|_\infty\delta_n\le1.
\]
For $|y_i|\le\delta_n$, the potential part satisfies
\begin{equation}\label{eq:common-smoothing-potential}
 \frac1N\left|\sum_{i\ne j\le k}
 [\psi(x_i-x_j)-\psi(x_i-y_i-x_j+y_j)]\right|
 \le2\|\nabla\psi\|_\infty\delta_n\frac{k(k-1)}N\le k.
\end{equation}
For $|w|,|y|\le\delta_n\le1$, we have
\[
 \beta_0|v|^2-\beta_1|v-w|^2
 \le\frac{\beta_0\beta_1}{\beta_1-\beta_0},\qquad
 \langle x-y\rangle^{-M_2}\le2^{M_2}\langle x\rangle^{-M_2}.
\]
The normalization of the truncated density in
\cite[Lemma~A.1]{JiangWuShortTime} contributes a factor at most $2$.
Together with \eqref{eq:common-smoothing-potential}, these estimates give
\[
 A_2=2^{M_2+1}A
       \exp\left(\beta_0+\frac{\beta_0\beta_1}{\beta_1-\beta_0}\right)
 \quad\text{on }\R^d.
\]
On $\T^d$, take the same constant without the factor $2^{M_2}$.
Thus $A_2$ is independent of $\psi$ and the convolution radii.
\end{remark}

Identity \eqref{eq:TV-level-N-equality} also holds for the regularized
particle laws and auxiliary solutions started from these common smooth data.
Apply \Cref{lem:common-exponential-envelope} to the auxiliary equation
with $A_1=A_2$ and $\beta_3=\beta_0$.
At level $N$, use
\eqref{eq:TV-level-N-equality} and integrate the last $N-k$ variables.
After enlarging $A_T,B_T$, we obtain
\begin{equation*}
	U_{N,k}^{\varepsilon,n}(t,Z_k)
	\le A_T^k e^{-\beta_Te_{N,k}^\varepsilon(Z_k)}
	\prod_{j=1}^{k}\langle x_j\rangle^{-M_2}
\end{equation*}
and
\begin{equation*}
	G_{N,k}^{\varepsilon,n}(t,Z_k)
	\le B_T^N e^{-\beta_Te_{N,k}^\varepsilon(Z_k)}
	\prod_{j=1}^{k}\langle x_j\rangle^{-M_2}.
\end{equation*}
On $\T^d$ the spatial products are omitted.  The constants
$A_T,B_T,\beta_T$ are independent of $N,k,n,\varepsilon$.

We extend \cite[Lemma~A.2]{JiangWuShortTime} to fields depending on
all particle positions.

\begin{lemma}\label{lem:common-L1-stability}
Fix $N\ge k\ge1$, $0<\varepsilon<\varepsilon_0$, and $T>0$.
Let $b_i\in C([0,T];C_b^1(\mathcal D^k;\R^d))$ be independent of
velocity, and set
\[
 \mathcal P=\partial_t+L_{N,k}^\varepsilon
       +\sum_i b_i(t,X_k)\cdot\nabla_{v_i}-\sigma\Delta_{V_k}.
\]
Let $g\ge0$ belong to $L^1$, and let $U\in C([0,T];L^1)$ be its
forward evolution under $\mathcal P U=0$.
Suppose $u\in C([0,T];L^1)$ satisfies $\mathcal P u=r$ in distributions,
with its $L^1$ initial trace, and
$r,\nabla_{V_k}u\in L^1(0,T;L^1)$.
Then, for every $t\in[0,T]$, we have
\begin{equation}\label{eq:common-L1-stability}
 \sup_{0\le a\le t}\|U(a)-u(a)\|_1
 \le\|g-u(0)\|_1+\int_0^t\|r(s)\|_1\,\dd s.
\end{equation}
Writing $a_+=\max\{a,0\}$, one also has
\begin{equation}\label{eq:common-L1-order}
 \sup_{0\le a\le t}\|(U(a)-u(a))_+\|_1
 \le\|(g-u(0))_+\|_1+\int_0^t\|(-r(s))_+\|_1\,\dd s.
\end{equation}
The same conclusions hold with $K_\varepsilon$ replaced by any bounded
$C^1$ force with bounded first derivatives.
\end{lemma}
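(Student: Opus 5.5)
The plan is the standard $L^1$ contraction argument for a kinetic Fokker--Planck operator with bounded Lipschitz drift, carried out at the level of the difference $w=U-u$. Since the regularized force $K_\varepsilon$ and the fields $b_i$ are bounded, $C^1$ with bounded derivatives, and independent of $v_i$, the transport part of $\mathcal P$ is divergence free in phase space:
\[
 \sum_i\nabla_{x_i}\cdot v_i+\sum_i\nabla_{v_i}\cdot\Bigl(\tfrac1N\textstyle\sum_{j\ne i}K_\varepsilon(x_i-x_j)+b_i\Bigr)=0,
\]
as in \eqref{eq:common-flow-jacobian}. Hence $\mathcal P$ is formally in divergence form and $L^1$-dissipative. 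By linearity, $w$ solves $\mathcal P w=-r$ with initial datum $g-u(0)$. I will prove \eqref{eq:common-L1-order} first. Then \eqref{eq:common-L1-stability} follows by applying \eqref{eq:common-L1-order} to $w$ and to $-w$ and adding, since $|w|=w_++(-w)_+$ and $(r)_++(-r)_+=|r|$.

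For \eqref{eq:common-L1-order}, I would mollify $w$ in phase space, $w_\delta=w*\theta_\delta$. The mollified function then satisfies $\mathcal P w_\delta=-r*\theta_\delta+\mathcal R_\delta$, where $\mathcal R_\delta$ is the commutator between $\theta_\delta$ and the transport field. Because the field $(V_k,\,\tfrac1N\sum K_\varepsilon+b)$ is globally Lipschitz and divergence free, the DiPerna--Lions commutator lemma gives $\mathcal R_\delta\to0$ in $L^1(0,T;L^1_{\mathrm{loc}})$ for $w\in L^\infty_tL^1$. The unboundedness of $v_i$ is handled by the local version combined with a cutoff. Next I test against $\Phi_\eta(w_\delta)\chi_R$, where $\Phi_\eta$ is a smooth convex approximation of $(\cdot)_+$ with $0\le\Phi_\eta'\le1$ and $\chi_R$ is a phase-space cutoff. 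The diffusion term gives $-\sigma\int\Phi_\eta''|\nabla_Vw_\delta|^2\chi_R\le0$ plus a boundary term $\sigma\int\Phi_\eta(w_\delta)\Delta_V\chi_R$. Alternatively, this boundary term can be bounded through $\nabla_Vw\in L^1_tL^1$, which is available for $u$ by hypothesis and for $U$ by the flow representation \eqref{eq:common-flow-density} with Brownian smoothing. The transport term gives $\int\Phi_\eta(w_\delta)\,(\text{field})\cdot\nabla\chi_R$, using the zero divergence. The source term is bounded by $\int(-r*\theta_\delta)_+$.

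For the cutoff, I take $\chi_R(z)=\chi(|z|/R)$. The transport error is then bounded by $C\int_{|z|\sim R}(1+|V|/R)|w_\delta|$. The part $|V|/R$ on $\{|z|\sim R\}$ is $O(1)$, so this error tends to zero as $R\to\infty$ by dominated convergence, because $w\in C_tL^1$. Letting $\delta\to0$, then $\eta\to0$, then $R\to\infty$ yields
\[
\|w_+(a)\|_1\le\|(g-u(0))_+\|_1+\int_0^a\|(-r)_+\|_1.
\]
The initial trace is used in the form $w_\delta(0)\to g-u(0)$ in $L^1$, which holds since both $U$ and $u$ lie in $C([0,T];L^1)$.

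The main obstacle is the commutator step. $U$ is only known in $C_tL^1$, the velocity field is unbounded in $v$, and no moment bound is assumed on $u$. My first attempt would therefore avoid renormalizing $U$ at all and use duality instead. Solve the backward Kolmogorov equation $\partial_s\varphi+\mathcal P^*$-type equation with terminal datum $\varphi(a)=\mathbf 1_{\{w(a)>0\}}$ smoothed, and $0\le\varphi\le1$. Because the coefficients are $C^1$ with bounded derivatives, $\varphi$ is given by the stochastic flow $\Phi^h$ of Proposition~\ref{prop:common-regularization-removal}, is smooth, and is bounded with bounded $\nabla_V\varphi$. Testing both $U$ and $u$ against $\varphi$ is legitimate, given $\nabla_Vu\in L^1_tL^1$ and the distributional equation. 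This yields
\[
\int w(a)\varphi(a)=\int(g-u(0))\varphi(0)-\int_0^a\!\!\int r\varphi\le\|(g-u(0))_+\|_1+\int_0^a\|(-r)_+\|_1.
\]
Approximating the indicator then gives \eqref{eq:common-L1-order}. For general bounded $C^1$ forces with bounded derivatives the same argument applies verbatim.
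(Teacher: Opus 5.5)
\textbf{Gap in the duality step.} The route you finally commit to is the paper's route: duality against the backward Kolmogorov equation, $0\le\varphi\le1$ from the stochastic representation, then a supremum over terminal data. As written, though, it skips the step the paper's proof is mainly about.

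The acceleration $a_i=N^{-1}\sum_{j\ne i}K_\varepsilon(x_i-x_j)+b_i(t,X_k)$ is only continuous in $t$ and $C_b^1$ in $X_k$. In the applications $b_i$ is $E_f$ or $(N-k)E_f/N$. So the stochastic flow yields only $\varphi(s,\cdot)\in C^1$, and your claim that $\varphi$ is smooth is unjustified. Second-order regularity would need a kinetic Schauder-type argument, which you do not supply.

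Since $u$ is merely a distributional solution, testing it against $\varphi\zeta_R$ requires $\varphi$ to solve the backward equation pointwise, with $\partial_s\varphi$ and $D_v^2\varphi$ bounded. A phase-space cutoff $\zeta_R$ is also needed there, because $\varphi$ is not compactly supported. Moving one $v$-derivative onto $u$ via $\nabla_{V_k}u\in L^1_tL^1$ does not remove the problem, since $(\partial_s+v\cdot\nabla_x+a\cdot\nabla_v)\varphi=-\sigma\Delta_v\varphi$ must still make sense.

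The paper resolves this by extending $a_i$ constantly in time and mollifying. This gives $a_i^\nu\to a_i$ uniformly with uniform $C_tC_b^1$ bounds, so that for fixed $\nu$ the dual solution has bounded first and second derivatives. That costs two things your sketch does not provide:
\begin{enumerate}
\item $u$ now solves the $\nu$-equation with source $r^\nu=r+\sum_i(a_i^\nu-a_i)\cdot\nabla_{v_i}u$. This tends to $r$ in $L^1_tL^1$ exactly because $\nabla_{V_k}u\in L^1_tL^1$, and this is where that hypothesis is used.
\item One must show $U^\nu\to U$ in $C_tL^1$. The paper gets this from volume preservation of the flows, Gronwall, and approximation of $g$ by $C_c^\infty$ data.
\end{enumerate}

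\textbf{The renormalization route you abandoned.} By contrast, that argument does close, and it is a genuinely different proof. Mollify in $z$ only. Then $w_\delta$ is absolutely continuous in $t$ with values in $C^\infty_{\mathrm{loc}}$, and $\Delta_{V_k}$ commutes with the mollifier. The field $A=(V_k,a)$ is divergence free and Lipschitz on compact sets uniformly in $t$. So the DiPerna--Lions lemma with $w\in L^\infty_tL^1$ gives $\mathcal R_\delta\to0$ in $L^1(0,T;L^1_{\mathrm{loc}})$. That is all you need, since $\chi_R$ has compact support.

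With the radial cutoff, $|A\cdot\nabla\chi_R|$ is bounded on its support. Hence the cutoff errors are at most $C\int_0^t\int_{|z|\ge R}|w|\to0$, with no moment assumption. The source is handled by $(-(r*\theta_\delta))_+\le(-r)_+*\theta_\delta$. So the obstacles you list are not real ones.

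This route uses only that $U$ solves $\mathcal PU=0$ in distributions. It dispenses with the hypothesis $\nabla_{V_k}u\in L^1_tL^1$, and needs neither the field mollification nor the stability estimate for $U^\nu$. The paper's duality route instead avoids any commutator estimate and reuses the representation and duality machinery of its companion work. The price is the explicit $L^1$ perturbation $r^\nu-r$ and the convergence $U^\nu\to U$.
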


\begin{proof}
Set
\[
 a_i(t,X_k)=\frac1N\sum_{j\ne i}K_\varepsilon(x_i-x_j)+b_i(t,X_k).
\]
Then $a_i\in C_tC_b^1$ and $\nabla_{v_i}\cdot a_i=0$. These are the
properties of the field used in \cite[Lemma~A.2]{JiangWuShortTime}.
Extend $a_i$ constantly in time
outside $[0,T]$ and mollify. The resulting fields satisfy
\begin{equation}\label{eq:common-field-mollification}
 \max_i\|a_i^\nu-a_i\|_\infty\longrightarrow0,\qquad
 \sup_\nu\max_i\|a_i^\nu\|_{C_tC_b^1}<\infty.
\end{equation}
Let $U^\nu$ be the forward evolution from $g$ for the acceleration
$a^\nu$, and put
\begin{equation*}
 r^\nu=r+\sum_i(a_i^\nu-a_i)\cdot\nabla_{v_i}u,
 \qquad \|r^\nu-r\|_{L_t^1L^1}\longrightarrow0.
\end{equation*}
For a smooth terminal function $\psi$ compactly supported in the
noncompact variables, let $\varphi$ solve the backward Kolmogorov
equation with acceleration $a^\nu$ and terminal value $\psi$ at time $t$.
The representation in \cite[Lemma~A.2]{JiangWuShortTime} implies
$\|\varphi\|_\infty\le\|\psi\|_\infty$ and $\varphi\ge0$ when $\psi\ge0$.
Choose a smooth cutoff $0\le\zeta_R\le1$ equal
to one for $|V_k|\le R$ and, on $\R^d$, $|X_k|\le R^2$, and supported
where $|V_k|\le2R$ and $|X_k|\le2R^2$. On the torus only the velocity
cutoff is needed. For fixed $\nu$, the first and second derivatives of
$\varphi$ are bounded. The cutoff satisfies
\begin{equation}\label{eq:common-duality-cutoff}
 \left|\sum_i v_i\cdot\nabla_{x_i}\zeta_R\right|
 +|\nabla_{V_k}\zeta_R|\le C_{N,k}R^{-1},\qquad
 |\Delta_{V_k}\zeta_R|\le C_{N,k}R^{-2}.
\end{equation}
Use $\zeta_R\varphi$ with a smooth time cutoff on $(0,t)$ in the
duality calculation of \cite[Lemma~A.2]{JiangWuShortTime}.
The $C_tL^1$ continuity of $U^\nu,u$ justifies the endpoint terms at $0$ and $t$.
By \eqref{eq:common-duality-cutoff} and $u,r^\nu\in L_t^1L^1$,
letting $R\to\infty$ yields
\begin{equation}\label{eq:auxiliary-duality-identity}
 \langle U^\nu(t)-u(t),\psi\rangle
 =\langle g-u(0),\varphi(0)\rangle
       -\int_0^t\langle r^\nu(s),\varphi(s)\rangle\,\dd s.
\end{equation}
The flows with accelerations $a^\nu,a$ preserve volume by
\eqref{eq:common-flow-jacobian}. For $g_m\in C_c^\infty$ with
$\|g_m-g\|_1\to0$, Gronwall's inequality and the approximation
argument of \cite[Lemma~A.2]{JiangWuShortTime} imply
\begin{equation}\label{eq:common-mollified-forward-L1}
 \sup_{t\le T}\|U^\nu(t)-U(t)\|_1
 \le2\|g-g_m\|_1+C_{N,k,\varepsilon,T,m}\max_i\|a_i^\nu-a_i\|_\infty.
\end{equation}
The constant depends on the common $C_b^1$ bound in
\eqref{eq:common-field-mollification}. Letting $\nu\to0$ and then
$m\to\infty$, we obtain $U^\nu\to U$ in $C_tL^1$.
Taking the supremum in \eqref{eq:auxiliary-duality-identity}
over $|\psi|\le1$, respectively $0\le\psi\le1$, proves
\eqref{eq:common-L1-stability} and \eqref{eq:common-L1-order}.
\end{proof}

\subsection{Distributional limit of the hierarchy}

\begin{proposition}\label{prop:common-singular-hierarchy}
Fix $N$ and $q$ as in \eqref{eq:intro-q-range}.  Suppose that the
regularized particle marginals of Section~\ref{sec:common-comparison}
satisfy the following convergence at every level:
\begin{equation}\label{eq:common-singular-hierarchy-convergence}
 G_{N,k}^\varepsilon\longrightarrow G_{N,k}
 \quad\text{in }C([0,T];L^1),
\end{equation}
and that, for some continuous $\lambda:[0,T]\to(0,\infty)$, we have
\begin{equation}\label{eq:common-singular-hierarchy-weight}
 \operatorname*{ess\,sup}_{t\le T}
 \int|G_{N,k}^\varepsilon(t)|^q e^{\lambda(t)e_{N,k}^\varepsilon}
      \varpi_k\,\dd Z_k\le C_k,
\end{equation}
with $C_k$ independent of $\varepsilon$.  Then
\begin{equation*}
 \partial_tG_{N,k}+L_{N,k}G_{N,k}-\sigma\Delta_{V_k}G_{N,k}
 +\frac{N-k}{N}\sum_{i=1}^k\nabla_{v_i}\cdot J_{i,k}[G_{N,k+1}]=0
\end{equation*}
in distributions, with initial trace $G_{N,k}^0$.  The last term is
omitted at $k=N$.

If \eqref{eq:common-singular-hierarchy-convergence} and
\eqref{eq:common-singular-hierarchy-weight} hold instead for the auxiliary
densities $U_{N,k}^\varepsilon,U_{N,k}$, then
\begin{equation*}
 \partial_tU_{N,k}+L_{N,k}U_{N,k}-\sigma\Delta_{V_k}U_{N,k}
 +\frac{N-k}{N}\sum_{i=1}^k E_f(t,x_i)\cdot\nabla_{v_i}U_{N,k}=0
\end{equation*}
in distributions, with the same initial trace $G_{N,k}^0$.
Both assertions also hold for the near-field interaction and its
regularizations defined below. For particle laws satisfying
\eqref{eq:common-singular-hierarchy-convergence} with an additional
fixed bounded measurable permutation-equivariant configuration field
$b_i^N(t,X_N)$, its contribution in the limiting equation is
$\sum_{i=1}^k\nabla_{v_i}\cdot\int b_i^NG_N\,\dd Z_{k+1:N}$.
\end{proposition}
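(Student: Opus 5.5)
The plan is to pass to the limit $\varepsilon\downarrow0$ in the weak formulation of the regularized hierarchy at fixed $N$ and $k$. Fix a test function $\varphi\in C_c^\infty([0,T)\times\Omega_{\mathcal D}^k)$. The regularized marginals satisfy
\[
 \int_0^T\!\!\int G_{N,k}^\varepsilon\bigl(\partial_t\varphi+L_{N,k}^{\varepsilon,*}\varphi+\sigma\Delta_{V_k}\varphi\bigr)
 +\frac{N-k}{N}\sum_i\int_0^T\!\!\int J_{i,k}^\varepsilon[G_{N,k+1}^\varepsilon]\cdot\nabla_{v_i}\varphi
 +\int G_{N,k}^0\varphi(0)=0 .
\]
Here the transport term is $L_{N,k}^{\varepsilon,*}\varphi=\sum_iv_i\cdot\nabla_{x_i}\varphi+\frac1N\sum K_\varepsilon(x_i-x_j)\cdot\nabla_{v_i}\varphi$, using $\nabla_{v_i}\cdot K_\varepsilon(x_i-x_j)=0$. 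The free-transport, diffusion and initial terms converge directly by \eqref{eq:common-singular-hierarchy-convergence}, since $\varphi$ and its derivatives are bounded and compactly supported.

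\emph{Internal singular term.} I would split
\[
 K_\varepsilon G^\varepsilon_{N,k}-KG_{N,k}=(K_\varepsilon-K)G^\varepsilon_{N,k}+K(G^\varepsilon_{N,k}-G_{N,k}).
\]
For the first piece, use the weighted H\"older inequality in the pair variable $x_i-x_j$ with exponents $q,q'$. Since $e^{\lambda e^\varepsilon_{N,k}}\varpi_k\ge1$ on the compact support of $\varphi$ (up to a constant from $\varpi_k^{-1}$), we get
\[
 \Bigl|\int(K_\varepsilon-K)(x_i-x_j)G^\varepsilon_{N,k}\nabla\varphi\Bigr|\le C_\varphi\,\|K_\varepsilon-K\|_{L^{q'}(\mathrm{loc})}\,C_k^{1/q}\to0,
\]
using \eqref{eq:common-singular-hierarchy-weight}, \Cref{lem:common-riesz-regularization} and $(s+1)q'<d$. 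For the second piece, $KG^\varepsilon_{N,k}$ is uniformly bounded in $L^1_{\mathrm{loc}}$ and uniformly integrable, because $G^\varepsilon$ is bounded in a weighted $L^q$ and $K\in L^{q'}_{\mathrm{loc}}$. Since $G^\varepsilon\to G$ in $L^1$, truncating $K$ at level $L$ gives convergence up to an error $\le\|K\ind_{|K|>L}\|_{L^{q'}_{\mathrm{loc}}}C_k^{1/q}$, which is uniform in $\varepsilon$ and vanishes as $L\to\infty$. Fatou's lemma shows that the limit $G_{N,k}$ inherits the weighted bound.

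\emph{Flux term.} This is the step I expect to be the main obstacle, because the kernel is singular and the bound must hold uniformly in $\varepsilon$. I would write
\[
 J^\varepsilon_{i,k}[G^\varepsilon_{N,k+1}]-J_{i,k}[G_{N,k+1}]
 =J_{i,k}^\varepsilon[G^\varepsilon_{N,k+1}]-J_{i,k}[G^\varepsilon_{N,k+1}]+J_{i,k}[G^\varepsilon_{N,k+1}-G_{N,k+1}].
\]
For the first difference, \Cref{lem:common-weighted-holder} applied with $K_\varepsilon-K$, with $\mathcal E_k=e^\varepsilon_{N,k}$ and $\widehat{\mathcal E}_{k+1}=e^\varepsilon_{N,k+1}$ (admissible by \eqref{eq:common-energy-identities}), gives the bound
\[
 \|J\|_{L^q(e^{\lambda e_{N,k}}\varpi_k)}\le C\|K_\varepsilon-K\|_{q,\varpi}C_{k+1}^{1/q}\to0
\]
by \eqref{eq:kernel-assumptions}. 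Testing against the compactly supported $\nabla\varphi$ then finishes this piece. For the second difference, I use the truncation argument again: for the bounded part $K\ind_{|K|\le L}$, the $L^1$ convergence suffices, while the tail part is controlled by \Cref{lem:common-weighted-holder} with $\|K\ind_{|K|>L}\|_{q,\varpi}\to0$. This tail estimate uses the uniform bounds \eqref{eq:common-singular-hierarchy-weight} at level $k+1$, together with the same bound for the limit obtained by Fatou. Continuity of $\lambda$ and $\lambda>0$ allow me to take $\lambda_{\min}$ in $\Gamma_{d,q}$.

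\emph{Auxiliary equations and near-field case.} For $U_{N,k}$ the flux term is replaced by $E_f\cdot\nabla_{v_i}U$. Since $E_f$ is bounded and continuous, this term passes to the limit by $L^1$ convergence alone. For the near-field potential $\widetilde\phi^a$ and its regularizations, the same argument applies because $|\widetilde K^{a,h}|\le|\widetilde K^a|\le|K|$ near the origin. For the extra configuration field, $b_i^N$ is bounded and fixed, so
\[
 \int b_i^NG_N^\varepsilon\,\dd Z_{k+1:N}\to\int b_i^NG_N\,\dd Z_{k+1:N}
\]
in $L^1$ by the level-$N$ convergence, and its divergence is tested against $\nabla_{v_i}\varphi$. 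Finally, the initial trace follows from $C([0,T];L^1)$ convergence.
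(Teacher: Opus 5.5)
Your proposal is correct and takes essentially the paper's route. The paper simply invokes \cite[Proposition~A.3]{JiangWuShortTime} for the full interaction. For the near-field case it checks exactly the ingredients your argument uses: $\sup_h\|\widetilde K^{a,h}\|_{q,\varpi}<\infty$, $\|\widetilde K^{a,h}-\widetilde K^a\|_{q,\varpi}\to0$ and $\widetilde e^h_{N,k+1}\ge\widetilde e^h_{N,k}+1+|v_{k+1}|^2$. For the configuration field it uses the same $L^1$ bound $\|b_i^N\|_\infty\sup_t\|G_N^h-G_N\|_1$.

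Two small points, neither of which is a gap. On $\T^d$ one only has $|\widetilde K^a|\le|K|+C$ near the origin, not $|\widetilde K^a|\le|K|$; what you actually need is $\widetilde K^a\in L^{q'}$ and the convergence of the kernel difference. In the tail estimate for $J_{i,k}[G^\varepsilon_{N,k+1}-G_{N,k+1}]$, apply \Cref{lem:common-weighted-holder} with the kinetic energies $\sum_{i\le k}(1+|v_i|^2)$ and $\sum_{i\le k+1}(1+|v_i|^2)$. This puts $G^\varepsilon$ and its Fatou limit $G$ under a common weight.
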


\begin{proof}
For the full interaction, the conclusions follow from
\cite[Proposition~A.3]{JiangWuShortTime}, with hypotheses
\eqref{eq:common-singular-hierarchy-convergence}--\eqref{eq:common-singular-hierarchy-weight}.
For the near-field interaction, \eqref{eq:ft-core-regularization-bounds}
and \eqref{eq:ft-core-kernel-difference} imply
\[
 \sup_{0<h<a/10}\|\widetilde K^{a,h}\|_{q,\varpi}<\infty,
 \qquad
 \|\widetilde K^{a,h}-\widetilde K^a\|_{q,\varpi}
 \le\|K^h-K\|_{L^{q'}}\longrightarrow0.
\]
Together with
$\widetilde e_{N,k+1}^h\ge\widetilde e_{N,k}^h+1+|v_{k+1}|^2$,
these bounds verify the hypotheses used in the proof of
\cite[Proposition~A.3]{JiangWuShortTime}.
For the bounded configuration field, we have
\begin{equation}\label{eq:bounded-configuration-flux-convergence}
 \sup_{t\le T}\left\|\int b_i^N(t,X_N)(G_N^h-G_N)
                         \,\dd Z_{k+1:N}\right\|_1
 \le\|b_i^N\|_\infty\sup_{t\le T}\|G_N^h-G_N\|_1\longrightarrow0.
\end{equation}
This completes the proof.
\end{proof}

\subsection{Localization and endpoint values}
\label{subsec:common-cutoff-testing}

We follow \cite[Lemma~3.6 and Section~A.4]{JiangWuShortTime}.
Fix the force and initial-data regularizations. Extend $E_f$ constantly
beyond the time endpoints and mollify in time and space. Then
\begin{equation*}
 E_f^\delta\longrightarrow E_f\quad\text{in }C_tL^\infty,
 \qquad \sup_\delta\|E_f^\delta\|_{C_tC_b^1}<\infty.
\end{equation*}
At fixed force, initial-data and field regularization parameters,
the inverse-flow and marginalization estimates in
\cite[Section~A.4, (A.11)--(A.12)]{JiangWuShortTime}
establish local smoothness of the regularized densities, differences and
interaction fluxes.
Let $e$ denote either the near-field or the full energy, and set
$W(t)=e^{\lambda(t)e}\varpi_k$. The pointwise estimates imply
\begin{equation}\label{eq:fixed-N-localization-majorant}
 (1+|V_k|^2)|H(t)|^qW(t)
 \le C_{N,k,T}(1+|V_k|^2)
 e^{-(q\beta_T-\lambda_0)\sum_i(1+|v_i|^2)}
 \prod_i\langle x_i\rangle^{\alpha-qM_2}\in L^1,
\end{equation}
uniformly in time and in $\delta$. The constant may depend on the
fixed splitting radius; spatial factors are omitted on $\T^d$.

For the smooth marginal or difference equation, write $L$ for the
regularized internal transport operator and $\mathcal F_i$ for its
velocity flux, so that
\[
 \partial_tH+LH-\sigma\Delta_{V_k}H
       +\sum_i\nabla_{v_i}\cdot\mathcal F_i=0.
\]
Use the cutoffs $\zeta_R$ from the proof of
\Cref{lem:common-L1-stability}. By
\eqref{eq:common-duality-cutoff} and boundedness of the regularized
acceleration, they satisfy
\begin{equation*}
 |L\zeta_R|+|\nabla_{V_k}\zeta_R|\le\frac{C_{N,k}}R,
 \qquad |\Delta_{V_k}\zeta_R|\le\frac{C_{N,k}}{R^2}.
\end{equation*}
The constants may depend on the fixed force regularization.
Testing with $q|H|^{q-2}H W\zeta_R$, we obtain the following identity;
for noninteger $q\ge2$, use the regularized tests in
\cite[Section~A.4]{JiangWuShortTime}.
\begin{equation}\label{eq:fixed-N-localized-energy-identity}
	\begin{aligned}
 &\frac{\dd}{\dd t}\int |H|^qW\zeta_R
 +q(q-1)\sigma\sum_i\int
        |H|^{q-2}|\nabla_{v_i}H|^2W\zeta_R\notag\\
 =&\int |H|^q(\partial_tW+LW+\sigma\Delta_{V_k}W)\zeta_R+q(q-1)\sum_i\int |H|^{q-2}\mathcal F_i\cdot\nabla_{v_i}H W\zeta_R\notag\\
 &+q\sum_i\int |H|^{q-2}H\mathcal F_i\cdot
  \nabla_{v_i}W\zeta_R+\mathcal R_R,
 \end{aligned}
\end{equation}
where
\begin{align*}
 \mathcal R_R
 =\int |H|^q\left(WL\zeta_R
       +\sigma W\Delta_{V_k}\zeta_R
       +2\sigma\nabla_{V_k}W\cdot\nabla_{V_k}\zeta_R\right)+q\sum_i\int |H|^{q-2}H\mathcal F_i\cdot \nabla_{v_i}\zeta_R W.
\end{align*}
Since $\nabla_{V_k}W=2\lambda V_kW$, the cutoff bounds and
the weighted H\"older inequality yield
\begin{align*}
 \int_s^t|\mathcal R_R(u)|\,\dd u
 &\le\frac{C_{N,k,T}}R\left[
 \int_s^t\!\int(1+|V_k|^2)|H|^qW\,\dd Z_k\dd u\right.\\
 &\qquad \qquad \quad \left.
 +\int_s^t\left(\int|H|^qW\right)^{1-1/q}
       \left(\sum_i\int|\mathcal F_i|^qW\right)^{1/q}\dd u
 \right]\longrightarrow0.
\end{align*}
The first time integral is finite by
\eqref{eq:fixed-N-localization-majorant}; the second is finite by
H\"older's inequality in time and the weighted flux bounds. Apply
Young's inequality as in
\cite[proof of Lemma~3.6, estimates following (3.45)]{JiangWuShortTime},
with the constants in \eqref{eq:common-flux-constants},
to \eqref{eq:fixed-N-localized-energy-identity}, integrate on $[s,t]$,
and let $R\to\infty$. Dominated convergence applies to the endpoint
energies and the terms without gradients, and Fatou's lemma applies
to the nonnegative dissipation. This proves
\eqref{eq:common-flux-dissipation} in integrated form on every
$[s,t]\subset[0,T]$ at fixed $\delta$.

Equation \eqref{eq:common-mollified-forward-L1} implies
$U^\delta\to U$ in $C_tL^1$. The pointwise bound and
\eqref{eq:fixed-N-weighted-Lq-interpolation}, with
$\lambda_0<(q-1)\beta_T$ and $\alpha<(q-1)M_2$, imply
\begin{equation}\label{eq:fixed-N-field-weighted-convergence}
 \sup_{t\le T}\|U^\delta(t)-U(t)\|_{L^q(W(t))}^q
 \le C_{N,k,T}\sup_{t\le T}\|U^\delta(t)-U(t)\|_1\longrightarrow0.
\end{equation}
For the differences $H^\delta=G-U^\delta$ and $H=G-U$, the same bound
applies because $H^\delta-H=-(U^\delta-U)$. For each $i\le k$, we have
\begin{equation}\label{eq:fixed-N-field-flux-convergence}
 \begin{aligned}
 \|E_f^\delta(\cdot,x_i)U^\delta-E_f(\cdot,x_i)U\|_{L_t^qL^q(W)}
 &\le\|E_f^\delta-E_f\|_\infty
                         \|U^\delta\|_{L_t^qL^q(W)}\\
 &\quad+\|E_f\|_\infty
                         \|U^\delta-U\|_{L_t^qL^q(W)}
 \longrightarrow0.
 \end{aligned}
\end{equation}
The same interpolation estimate proves convergence of the weighted
integrals uniformly in time, including at the endpoints. For $q>2$, H\"older's inequality gives
\[
\begin{aligned}
 \bigl\||H^\delta|^{q/2}-|H|^{q/2}\bigr\|_{L_t^2L^2(W)}
 &\le\frac q2\left(
       \|H^\delta\|_{L_t^qL^q(W)}^{q/2-1}
       +\|H\|_{L_t^qL^q(W)}^{q/2-1}\right)
       \|H^\delta-H\|_{L_t^qL^q(W)}
 \longrightarrow0.
\end{aligned}
\]
For $q=2$, the same convergence follows from
$\bigl||H^\delta|-|H|\bigr|\le|H^\delta-H|$.
The dissipation bound and weak compactness therefore imply
\begin{equation*}
 \sup_\delta\int_0^T\!\int
       |\nabla_{V_k}(|H^\delta|^{q/2})|^2W<\infty
       \end{equation*}
       	and
       	\begin{equation*}
 \nabla_{V_k}(|H^\delta|^{q/2})
       \rightharpoonup\nabla_{V_k}(|H|^{q/2})
       \quad\text{in }L^2(W\,\dd t\dd Z_k).
\end{equation*}
In particular, for every $[s,t]\subset[0,T]$, we have
\[
 \int_s^t\!\int |\nabla_{V_k}(|H|^{q/2})|^2W
 \le\liminf_{\delta\to0}\int_s^t\!\int
                |\nabla_{V_k}(|H^\delta|^{q/2})|^2W.
\]
The energy inequality follows for the original field $E_f$.

For the comparison with $f_t^{\otimes k}$, we use
$
 \sum_{i=1}^k\|\nabla_{v_i}f_t^{\otimes k}\|_1
     =k\|\nabla_vf(t)\|_1.
$
The additional residual caused by field mollification therefore satisfies
\begin{equation*}
 \left\|\frac{N-k}{N}\sum_i(E_f^\delta-E_f)(t,x_i)
                  \cdot\nabla_{v_i}f_t^{\otimes k}\right\|_{L_t^1L^1}
 \le k\|E_f^\delta-E_f\|_\infty
               \int_0^T\|\nabla_vf(t)\|_1\,\dd t\longrightarrow0.
\end{equation*}

\subsection{Limit of the integrated energy inequality}

We state the weighted convergence argument from the proof of
\cite[Lemma~A.4]{JiangWuShortTime} for general densities and differences.

\begin{lemma}[Convergence of weighted integrals]
\label{lem:common-weighted-energy-convergence}
Fix $N,k$ and $q>1$. Let $F^{\varepsilon,n}\to F^\varepsilon\to F$ successively
in $C([0,T];L^1)$, and let nonnegative energies
$e^\varepsilon,e\ge k+|V_k|^2$ satisfy
\begin{equation}\label{eq:fixed-N-energy-convergence}
 e^\varepsilon=e\quad\text{on }\mathcal C_\varepsilon^c,
 \qquad
 \mathcal C_\varepsilon=\{\min_{i<j}d_{\mathcal D}(x_i,x_j)\le2\varepsilon\}.
\end{equation}
Set $\mathcal C_\varepsilon=\varnothing$ for $k=1$.
Assume the following bounds hold uniformly in time and approximation parameters:
\begin{equation}\label{eq:fixed-N-weighted-convergence-bounds}
\begin{aligned}
 |F^{\varepsilon,n}|,|F^\varepsilon|
 &\le C_{N,k}e^{-\beta e^\varepsilon}
               \prod_i\langle x_i\rangle^{-M_2},\\
 |F|&\le C_{N,k}e^{-\beta e}
               \prod_i\langle x_i\rangle^{-M_2}.
\end{aligned}
\end{equation}
On $\R^d$, take $M_2>d$ and $\alpha<(q-1)M_2$; on $\T^d$ omit
spatial factors. For continuous $0<\lambda\le\lambda_0<(q-1)\beta$,
the weighted integrals satisfy
\begin{equation*}
 \mathcal E^{\varepsilon,n}(t)
 :=\int|F^{\varepsilon,n}(t)|^qe^{\lambda(t)e^\varepsilon}\varpi_k
 \longrightarrow\mathcal E^\varepsilon(t)\longrightarrow\mathcal E(t)
\end{equation*}
uniformly on $[0,T]$, with the corresponding definitions of
$\mathcal E^\varepsilon$ and $\mathcal E$. These functions are continuous.
For sufficiently small $\varepsilon$, a constant $C_{N,k}$
independent of $\varepsilon,n$ satisfies
\begin{equation}\label{eq:fixed-N-quantitative-energy-convergence}
 \begin{aligned}
 \|\mathcal E^{\varepsilon,n}-\mathcal E^\varepsilon\|_{C([0,T])}
 &\le C_{N,k}\|F^{\varepsilon,n}-F^\varepsilon\|_{C([0,T];L^1)},\\
 \|\mathcal E^\varepsilon-\mathcal E\|_{C([0,T])}
 &\le C_{N,k}\left(
 \|F^\varepsilon-F\|_{C([0,T];L^1)}+\varepsilon^d\right).
 \end{aligned}
\end{equation}
\end{lemma}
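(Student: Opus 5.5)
The plan is to split the weighted integrals into a region far from collisions, where the energies coincide, and a small collision region, where the pointwise envelopes \eqref{eq:fixed-N-weighted-convergence-bounds} absorb everything. The argument rests on one interpolation estimate. Let $G,\widetilde G$ be two densities obeying the envelope with the same energy $e$. Write $W=e^{\lambda e}\varpi_k$. Use
\[
\bigl||G|^q-|\widetilde G|^q\bigr|\le q\,|G-\widetilde G|\max\{|G|,|\widetilde G|\}^{q-1}.
\]
Together with the envelope, this gives
\[
\bigl||G|^q-|\widetilde G|^q\bigr|W\le qC_{N,k}^{q-1}|G-\widetilde G|\,e^{-((q-1)\beta-\lambda_0)e}\prod_i\langle x_i\rangle^{\alpha-(q-1)M_2}.
\]
Since $\lambda_0<(q-1)\beta$, $\alpha<(q-1)M_2$ and $e\ge k$, the multiplier of $|G-\widetilde G|$ is bounded. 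This yields the estimate labelled \eqref{eq:fixed-N-weighted-Lq-interpolation}:
\[
\int\bigl||G|^q-|\widetilde G|^q\bigr|W\le C_{N,k}\|G-\widetilde G\|_1 .
\]
The same estimate, applied to $|G|^qW$ alone, shows that every weighted integral is finite. The constant is uniform in $t$ because $\lambda\le\lambda_0$.

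\emph{First limit $n\to\infty$.} Both $F^{\varepsilon,n}$ and $F^\varepsilon$ carry the same energy $e^\varepsilon$ and the same envelope. The interpolation estimate therefore gives the first line of \eqref{eq:fixed-N-quantitative-energy-convergence} directly, and uniform convergence follows.

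\emph{Second limit $\varepsilon\to0$.} I would decompose
\[
\mathcal E^\varepsilon-\mathcal E=\int_{\mathcal C_\varepsilon^c}\bigl(|F^\varepsilon|^q-|F|^q\bigr)e^{\lambda e}\varpi_k+\int_{\mathcal C_\varepsilon}|F^\varepsilon|^qe^{\lambda e^\varepsilon}\varpi_k-\int_{\mathcal C_\varepsilon}|F|^qe^{\lambda e}\varpi_k .
\]
On $\mathcal C_\varepsilon^c$ the two energies agree by \eqref{eq:fixed-N-energy-convergence}. Both densities then satisfy the envelope with energy $e$, so the interpolation estimate bounds the first term by $C_{N,k}\|F^\varepsilon-F\|_1$.

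For the two collision integrals, each integrand is bounded by
\[
C_{N,k}^qe^{-(q\beta-\lambda_0)(k+|V_k|^2)}\prod_i\langle x_i\rangle^{\alpha-qM_2},
\]
with energy $e^\varepsilon$ or $e$ as appropriate. The lower bound $e^\varepsilon,e\ge k+|V_k|^2$ lets me discard the potential part of the energy. Integrating out velocities, the remaining spatial weight is integrable, since $\alpha-qM_2\le-M_2<-d$. It remains to measure $\mathcal C_\varepsilon$, which is a union of the sets $\{d_{\mathcal D}(x_i,x_j)\le2\varepsilon\}$. Fix the pair $(i,j)$ and the other variables. The set $\{|x_i-x_j|\le2\varepsilon\}$ has $x_i$-measure at most $C\varepsilon^d$, and the spatial factor in $x_i$ stays bounded. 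Summing over the $k(k-1)/2$ pairs gives $C_{N,k}\varepsilon^d$. On $\R^d$, after integrating in $x_i$ over the small ball, I keep the integrable weight in $x_j$ and in the other variables.

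Collecting the three bounds gives the second line of \eqref{eq:fixed-N-quantitative-energy-convergence}. Every bound is uniform in $t$, so the convergence is uniform.

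\emph{Continuity.} Each of $\mathcal E^{\varepsilon,n},\mathcal E^\varepsilon,\mathcal E$ is continuous in $t$. The weight varies continuously with $\lambda(t)$, and dominated convergence applies with the majorant above. Continuity of $F$ in $C_tL^1$ is handled by the same interpolation estimate. A uniform limit of continuous functions is continuous, which covers the limits as well.

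The main obstacle is the collision region, where only the envelopes are available and the energies differ. It is handled by discarding the potential part of the energy, using $e\ge k+|V_k|^2$, and by the fact that on $\R^d$ the weight is integrable only after the $\varepsilon$-ball is placed in one variable of the pair.
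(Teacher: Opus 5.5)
Your proposal is correct and follows essentially the same route as the paper. The envelopes give $\int\bigl||F_1|^q-|F_2|^q\bigr|e^{\lambda e}\varpi_k\le C_{N,k}\|F_1-F_2\|_1$ for densities with a common energy; this is applied on $\mathcal C_\varepsilon^c$, and the collision set is handled by the majorant $C_{N,k}^qe^{-(q\beta-\lambda_0)(k+|V_k|^2)}\prod_i\langle x_i\rangle^{\alpha-qM_2}$, whose integral over $\mathcal C_\varepsilon$ is $O(\varepsilon^d)$.

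The differences are minor:
\begin{itemize}
\item You prove the interpolation bound directly by the mean value theorem, where the paper cites the earlier work.
\item You obtain time continuity by dominated convergence, where the paper uses a Lipschitz estimate in $\lambda$ via $ue^{-cu}\le Ce^{-cu/2}$.
\item The paper's label \eqref{eq:fixed-N-weighted-Lq-interpolation} denotes the companion bound $\int|F_1-F_2|^qe^{\lambda e}\varpi_k\le C_{N,k}\|F_1-F_2\|_1$, not the difference-of-powers estimate you attached it to. It follows the same way from $|F_1-F_2|^q\le|F_1-F_2|(|F_1|+|F_2|)^{q-1}$.
\end{itemize}
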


\begin{proof}
The integrands are bounded by
\begin{equation}\label{eq:fixed-N-common-dominator}
 D_{N,k}(Z_k)=C_{N,k}^qe^{-(q\beta-\lambda_0)(k+|V_k|^2)}
                 \prod_i\langle x_i\rangle^{\alpha-qM_2}\in L^1.
\end{equation}
For $F_1,F_2$ satisfying \eqref{eq:fixed-N-weighted-convergence-bounds}
with the same energy, the estimate in
\cite[Lemma~A.4, proof of (A.19)]{JiangWuShortTime} implies
\[
 \int\bigl||F_1|^q-|F_2|^q\bigr|
             e^{\lambda e^\varepsilon}\varpi_k
 \le C_{N,k}\|F_1-F_2\|_1.
\]
The same pointwise bounds also imply
\begin{equation}\label{eq:fixed-N-weighted-Lq-interpolation}
 \int|F_1-F_2|^qe^{\lambda e^\varepsilon}\varpi_k
 \le C_{N,k}\|F_1-F_2\|_1.
\end{equation}
These inequalities prove the first estimate in
\eqref{eq:fixed-N-quantitative-energy-convergence}.
Apply the same estimate on $\mathcal C_\varepsilon^c$, where
\eqref{eq:fixed-N-energy-convergence} holds, and use
\eqref{eq:fixed-N-common-dominator} on $\mathcal C_\varepsilon$:
\begin{equation}\label{eq:fixed-N-energy-epsilon-passage}
 \sup_{t\le T}|\mathcal E^\varepsilon(t)-\mathcal E(t)|
 \le C_{N,k}\sup_{t\le T}\|F^\varepsilon(t)-F(t)\|_1
                 +2\int_{\mathcal C_\varepsilon}D_{N,k}\longrightarrow0.
\end{equation}
Using $\alpha-qM_2<-M_2<-d$ on $\R^d$, the estimate on
$\mathcal C_\varepsilon$ in \cite[Lemma~A.4, (A.20)]{JiangWuShortTime}
yields
\[
 \int_{\mathcal C_\varepsilon}D_{N,k}
 \le C_{N,k}\varepsilon^d.
\]
Together with \eqref{eq:fixed-N-energy-epsilon-passage}, this proves the
second estimate in \eqref{eq:fixed-N-quantitative-energy-convergence}.
For continuity in time,
$u e^{-(q\beta-\lambda_0)u}\le C e^{-(q\beta-\lambda_0)u/2}$ and
$e^\varepsilon\ge k+|V_k|^2$ imply
\[
 \sup_{t\le T}\int e^\varepsilon|F^\varepsilon(t)|^q
              e^{\lambda_0e^\varepsilon}\varpi_k\le C_{N,k}.
\]
Hence, for $0\le s,t\le T$, we have
\begin{align*}
 |\mathcal E^\varepsilon(t)-\mathcal E^\varepsilon(s)|
 &\le q\int
 (|F^\varepsilon(t)|+|F^\varepsilon(s)|)^{q-1}
 |F^\varepsilon(t)-F^\varepsilon(s)|
 e^{\lambda(t)e^\varepsilon}\varpi_k\\
 &\quad+|\lambda(t)-\lambda(s)|
       \int e^\varepsilon|F^\varepsilon(s)|^q
             e^{\lambda_0e^\varepsilon}\varpi_k\\
 &\le C_{N,k}\bigl(\|F^\varepsilon(t)-F^\varepsilon(s)\|_1
                         +|\lambda(t)-\lambda(s)|\bigr).
\end{align*}
The same estimate applies to $\mathcal E^{\varepsilon,n}$.
Uniform convergence proves continuity of $\mathcal E$.
\end{proof}

\begin{remark}\label{rem:weighted-endpoints}
For $c\in L^1(0,T)$, we have
\[
 \left|\int_s^t c(r)(\mathcal E^\varepsilon-\mathcal E)(r)\,\dd r\right|
 \le\|c\|_{L^1(0,T)}\sup_{r\le T}|\mathcal E^\varepsilon(r)-\mathcal E(r)|.
\]
Thus integrated inequalities with fixed integrable coefficients and
convergent $L^1$ sources pass on every $[s,t]\subset[0,T]$,
including the two endpoint values.
\end{remark}

\subsection{Approximation with a bounded configuration field}
\label{subsec:ft-approximation}

Use the initial density $G_N^0$ and the bound
\eqref{eq:comparison-initial-envelope} from
Section~\ref{sec:common-comparison}, together with the spatial weights
in \eqref{eq:TV-alpha-choice}. On $\T^d$, omit the spatial factors.
The construction below uses the particle dynamics of
Section~\ref{sec:kernel-verification} and the pointwise estimates of
\Cref{lem:common-exponential-envelope}. The localization and weighted
convergence arguments then justify the tests and limits in part~(iii)
of \Cref{lem:ft-approximation}. The scalar inequalities mentioned
there are derived in Section~\ref{sec:fixed-time-riesz} for the
regularized equations.
In this subsection,
$b_i^N\in C_b^\infty(\mathcal D^N;\R^d)$ are permutation-equivariant
configuration fields with $\sup_{N,i}\|b_i^N\|_\infty\le B_0$. Their
derivative bounds may depend on $N$ and on the parameter $a$ below.

Use the decomposition \eqref{eq:ft-potential-split}, with
$0<a\le a_2$ and
$0\le\widetilde\phi^a\le\phi$,
$\overline\phi^a\ge0$, $\overline\phi^a\in L^\infty$ as in
\eqref{eq:ft-split-properties}.

Choose $\eta\in C^\infty([0,\infty);[0,1])$ equal to zero on $[0,1]$
and one on $[2,\infty)$. For $0<h<a/10$, define
\begin{equation*}
 \widetilde\phi^{a,h}(x)=c_{d,s}s\int_{|x|}^\infty
             \chi(r/a)\eta(r/h)r^{-s-1}\,\dd r,\qquad
 \phi^h=\widetilde\phi^{a,h}+\overline\phi^a.
\end{equation*}
Then $0\le\widetilde\phi^{a,h}\le\widetilde\phi^a$,
$0\le\phi^h\le\phi$, and both regularized potentials agree with
their singular counterparts for $|x|\ge2h$. Their forces satisfy
\[
 \widetilde K^{a,h}:=-\nabla\widetilde\phi^{a,h}
 =\eta(|x|/h)\widetilde K^a,\qquad
 K^h=\widetilde K^{a,h}+\overline K^a.
\]
Since $2h<a$, we have
\begin{equation}\label{eq:ft-core-kernel-difference}
 K^h(x)-K(x)=
 \begin{cases}
 c_{d,s}s\bigl(\eta(|x|/h)-1\bigr)\dfrac{x}{|x|^{s+2}},
       &0<|x|<2h,\\
 0,&d_{\mathcal D}(x,0)\ge2h.
 \end{cases}
\end{equation}
Radial integration and \eqref{eq:ft-near-kernel-constant} yield
\begin{equation}\label{eq:ft-core-regularization-bounds}
 \begin{aligned}
 &\|\widetilde K^{a,h}\|_{q,\varpi}^q
 \le\|\widetilde K^{a,h}\|_{L^{q'}}^q
 \le C_3a^{q(d-s-1)-d},\\
 &\|K^h-K\|_1\le C_{d,s}h^{d-s-1},\\
 &\|K^h-K\|_{L^{q'}}^q\le C_{d,s,q}h^{q(d-s-1)-d}.
 \end{aligned}
\end{equation}
The constants are independent of $a,h,N$.

Since $\phi^h\le\phi$ and $\beta_1<\beta_2$,
\eqref{eq:comparison-initial-envelope} implies the initial bound with
energy $e_{N,k}^h$ and exponent $\beta_1$ required by
\Cref{lem:common-smooth-initial-data,rem:smoothing-general-potential}.
These results provide smooth symmetric probability
densities $G_N^{0,h,n}$ such that
\begin{equation}\label{eq:ft-smooth-initial-envelope}
 G_{N,k}^{0,h,n}\le A_2^ke^{-\beta_0e_{N,k}^h}
                  \prod_{i=1}^kw_{M_2}(x_i),\qquad
 \xi_{n,h}:=\|G_N^{0,h,n}-G_N^0\|_1\longrightarrow0.
\end{equation}
The constant $A_2\ge1$ depends on $A,M_2,\beta_0,\beta_1$,
and is independent of $N,a,h,n$. The energies $\widetilde e_{N,k}^h$
and $e_{N,k}^h$ use $\widetilde\phi^{a,h}$ and $\phi^h$, respectively.
By $L^1$ contraction under marginalization and
\eqref{eq:conditioned-initial-error}, we have
\begin{equation*}
 \|G_{N,k}^{0,h,n}-f_0^{\otimes k}\|_1
 \le \xi_{n,h}+A^k/N.
\end{equation*}

Let $\widetilde G_N^{h,n}$ be the law with initial density
$G_N^{0,h,n}$ and acceleration
$N^{-1}\sum_{j\ne i}\widetilde K^{a,h}(X_i-X_j)+b_i^N(X_N)$.
Let $U_{N,k}^{h,n}$ solve the auxiliary equation with internal force
$K^h/N$, prescribed field $(N-k)E_f/N$, and initial datum
$G_{N,k}^{0,h,n}$. Put
$H_{N,k}^{h,n}=\widetilde G_{N,k}^{h,n}-U_{N,k}^{h,n}$.
Superscript $h$ alone denotes evolution from $G_N^0$; omission of $h$
means evolution with the corresponding singular force. In particular,
$H_{N,k}^{h,n}(0)=0$ for every $1\le k\le N$.

\begin{lemma}\label{lem:ft-approximation}
\begin{enumerate}[label=(\roman*),ref=(\roman*),leftmargin=*]
\item\label{item:ft-approximation-dynamics}
The singular systems have pathwise unique noncolliding solutions and
forward densities in $C([0,T];L^1)$. At fixed $N,a$, we have
\begin{equation}\label{eq:ft-approximation-L1}
 \begin{aligned}
 \widetilde G_{N,k}^{h,n}&\longrightarrow\widetilde G_{N,k}^h
                         \longrightarrow\widetilde G_{N,k},\\
 U_{N,k}^{h,n}&\longrightarrow U_{N,k}^h\longrightarrow U_{N,k}
 \end{aligned}
 \quad\text{in }C([0,T];L^1),
\end{equation}
first as $n\to\infty$, then as $h\downarrow0$.

\item\label{item:ft-approximation-weights}
With $\beta_T=\beta_0/(1+(4\sigma+2)\beta_0T)$, there are constants
$A_T,C_T\ge1$, independent of $N,a,h,n$, such that
\begin{equation}\label{eq:ft-approximation-envelopes}
 \begin{aligned}
 \widetilde G_N^{h,n}&\le A_T^Ne^{-\beta_T\widetilde e_{N,N}^h}
                              \prod_{i=1}^Nw_{M_2}(x_i),\\
 \widetilde G_{N,k}^{h,n}&\le C_T^Ne^{-\beta_T\widetilde e_{N,k}^h}
                              \prod_{i=1}^kw_{M_2}(x_i),\\
 U_{N,k}^{h,n}&\le A_T^ke^{-\beta_Te_{N,k}^h}
                              \prod_{i=1}^kw_{M_2}(x_i).
 \end{aligned}
\end{equation}
The bound with the full energy is
\begin{equation}\label{eq:ft-approximation-full-envelope}
 \widetilde G_{N,k}^{h,n}
 \le C_T^N e^{\beta_Tk\|\overline\phi^a\|_\infty}
                  e^{-\beta_Te_{N,k}^h}
                  \prod_{i=1}^kw_{M_2}(x_i).
\end{equation}
These estimates hold after both limits. For continuous
$0<\lambda(t)\le\lambda_0<(q-1)\beta_T$, the weighted integrals of the $q$-th powers of these densities and their
differences converge uniformly
on $[0,T]$, with the corresponding near-field or full energies.

\item\label{item:ft-approximation-limits}
For $\lambda'=-\Lambda\lambda^2$ as in
\Cref{lem:common-weighted-flux-energy}, the regularized marginal and
difference equations satisfy its energy inequality on every
$[s,t]\subset[0,T]$, with their internal near-field operator and
near-field weight. The singular marginal equations hold in distributions
with their initial traces.
For the field in \eqref{eq:ft-clipped-system}, the scalar inequalities
\eqref{eq:ft-smooth-near-energy} and
\eqref{eq:ft-regularized-error-hierarchy}, derived at fixed $N,a,h,n$,
pass successively to the limits $n\to\infty$ and $h\downarrow0$
on every $[s,t]\subset[0,T]$.
\end{enumerate}
\end{lemma}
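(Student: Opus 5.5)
The plan is to prove the three parts in order, since each relies on the previous one, and to work throughout at fixed $N$ and $a$.

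For part (i), I would apply \Cref{prop:common-finite-N} with $k=N$, internal potential $\psi=\widetilde\phi^a$ (or $\widetilde\phi^{a,h}$ plus its limit) and configuration field $b_i^N$. The field is $C_b^\infty$, and $\psi\to\infty$ at the origin because $\widetilde\phi^a\ge c\,|x|^{-s}-C$ near zero. This gives pathwise unique noncolliding solutions for the modified system, and the same argument applies to the auxiliary equation with field $(N-k)E_f/N$ and potential $\phi$. The limit $h\downarrow0$ in \eqref{eq:ft-approximation-L1} then follows from \Cref{prop:common-regularization-removal}, since $\widetilde K^{a,h}$ and $K^h$ agree with the singular forces outside $B_{2h}$. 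The limit $n\to\infty$ at fixed $h$ follows from the $L^1$ contraction of the Markov transitions in \Cref{prop:common-finite-N}, because $\|G_{N,k}^{h,n}(t)-G_{N,k}^h(t)\|_1\le\xi_{n,h}\to0$ uniformly in time. Marginalization is an $L^1$ contraction, so the convergence passes to all levels $k$.

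For part (ii), the $N$-level envelope comes from \Cref{lem:common-exponential-envelope}. I would apply it with $k=N$, internal potential $\widetilde\phi^{a,h}$, fields $b_i^N$ bounded by $L=B_0$, $A_1=A_2$, $\beta_3=\beta_0$, using the initial bound \eqref{eq:ft-smooth-initial-envelope} together with $\widetilde e^h\le e^h$. The constants depend only on $B_0$, so $A_T$ is independent of $a,h,n,N$. To get the marginal bound, integrate out $Z_{k+1:N}$ using $\widetilde e_{N,N}^h\ge\widetilde e_{N,k}^h+\sum_{\ell>k}(1+|v_\ell|^2)$; this yields $C_T^N$ with $C_T=A_T\max\{1,\int e^{-\beta_T(1+|v|^2)}w_{M_2}\}$. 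The auxiliary bound is the regularized case of \Cref{lem:TV-U-pointwise}, applied via \Cref{lem:common-exponential-envelope} with $b_i=(N-k)E_f/N$. The full-energy bound \eqref{eq:ft-approximation-full-envelope} follows from \eqref{eq:ft-full-near-energy}. These bounds pass to the limits by a.e. convergence along subsequences. Uniform convergence of the weighted $q$-th power integrals is exactly \Cref{lem:common-weighted-energy-convergence}, with $F$ equal to the densities or their differences, $\beta=\beta_T$, and energies $\widetilde e^h$ or $e^h$. Hypothesis \eqref{eq:fixed-N-energy-convergence} holds because the regularized potentials coincide with the singular ones off $\mathcal C_h$. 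The prefactor $e^{\beta_T k\|\overline\phi^a\|_\infty}$ only affects $C_{N,k}$, which is allowed to depend on $a$.

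For part (iii), the distributional hierarchies follow from \Cref{prop:common-singular-hierarchy}, including its near-field clause and bounded-field clause, with inputs from parts (i) and (ii). For the energy inequality for the regularized equations, I would use the localization argument of Section~\ref{subsec:common-cutoff-testing}: test with $q|H|^{q-2}HW\zeta_R$ and control the cutoff remainder by the majorant \eqref{eq:fixed-N-localization-majorant}, which follows from (ii) since $\lambda_0<q\beta_T$ and $\alpha<(q-1)M_2$. Then remove the mollification of $E_f$ via \eqref{eq:fixed-N-field-weighted-convergence}--\eqref{eq:fixed-N-field-flux-convergence} and lower semicontinuity of the dissipation. Finally, the scalar inequalities pass to the limits $n\to\infty$ and then $h\downarrow0$ by \Cref{rem:weighted-endpoints}, because the $Y$'s and $\mathcal A$'s converge in $C([0,T])$ and the sources $\xi_{n,h},h^{q(d-s-1)}$ tend to zero.

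The main obstacle is controlling the flux terms $J^h_{i,k}$ and $I^{h,n}_{i,k}$ in the localization remainder uniformly in the mollification parameter. These need the weighted H\"older bound \Cref{lem:common-weighted-holder} at level $k+1$, using the fixed-$N$ weighted finiteness from (ii). I would verify this first, using $\|\widetilde K^{a,h}\|_{q,\varpi}\le C$ from \eqref{eq:ft-core-regularization-bounds} and the pointwise bound $|I_{i,k}|\le B_0\widetilde G_{N,k}$.
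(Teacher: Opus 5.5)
Your proposal is correct and follows the paper's proof essentially step by step. For (i) you use \Cref{prop:common-finite-N,prop:common-regularization-removal} together with Markov contraction. For (ii) you use \Cref{lem:common-exponential-envelope} with drift bound $B_0$ (respectively $\|E_f\|_{C_tL^\infty}$), integration over the last $N-k$ variables, \eqref{eq:ft-full-near-energy} and \Cref{lem:common-weighted-energy-convergence}. For (iii) you use the localization of Section~\ref{subsec:common-cutoff-testing}, removal of the $E_f$-mollification, \Cref{rem:weighted-endpoints} and \Cref{prop:common-singular-hierarchy}.

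The one point you leave implicit is the local smoothness of the configuration flux $\int b_i^N\widetilde G_N^{h,n}\,\dd Z_{k+1:N}$, which is needed for the test with $q|H|^{q-2}HW\zeta_R$. The paper obtains it from the inverse-flow estimates, which apply because $b_i^N\in C_b^\infty$, together with the product rule in $W^{r,1}$.
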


\begin{proof}
\emph{(i).}
Apply \Cref{prop:common-finite-N,prop:common-regularization-removal}
with the near-field potential and field $b_i^N$ for the modified system,
and with the full potential and field $(N-k)E_f/N$ for the auxiliary
equation. These propositions establish noncollision, continuation and
convergence in $C_tL^1$ as $h\downarrow0$.
Markov contraction gives
\begin{equation*}
 \sup_{t\le T}\max\left\{
 \|\widetilde G_{N,k}^{h,n}-\widetilde G_{N,k}^h\|_1,
 \|U_{N,k}^{h,n}-U_{N,k}^h\|_1\right\}\le\xi_{n,h}.
\end{equation*}

\emph{(ii).}
\Cref{lem:common-exponential-envelope} applies with initial exponent
$\beta_0$ and drift bound $\max\{B_0,\|E_f\|_{C_tL^\infty}\}$.
This proves the first and third estimates in
\eqref{eq:ft-approximation-envelopes}. Integrating the first estimate with respect to the last
$N-k$ particle variables yields the second, by nonnegativity of the pair potential.
The energy comparison \eqref{eq:ft-full-near-energy} implies
\eqref{eq:ft-approximation-full-envelope}.
At each fixed time, extract an almost-everywhere convergent subsequence
from the $L^1$ convergence. Since the regularized potential converges
off the collision diagonals, which have measure zero, all pointwise
bounds hold for the limiting densities.

For each density or difference in
\eqref{eq:ft-approximation-L1}, the bounds
\eqref{eq:ft-approximation-envelopes}--\eqref{eq:ft-approximation-full-envelope}
satisfy \Cref{lem:common-weighted-energy-convergence} with the
corresponding near-field or full energy. At fixed $N,a$, its constant is
uniform in $h,n$. The weighted integrals therefore converge in
$C([0,T])$, first as $n\to\infty$, then as $h\downarrow0$.
In particular, \eqref{eq:fixed-N-quantitative-energy-convergence}
controls both endpoint values in every integrated inequality.

\emph{(iii).}
For the energy inequality, write $\widetilde J^h$ and $J^h$ for
integration against $\widetilde K^{a,h}$ and $K^h$.
With internal operator $\widetilde L_{N,k}^{a,h}$ from
\eqref{eq:ft-internal-far-force}, the marginal equation and the
equation for $H_{N,k}^{h,n}$ have the respective fluxes
\begin{equation}\label{eq:fixed-N-regularized-fluxes}
\begin{aligned}
 &\frac{N-k}{N}\widetilde J_{i,k}^h[\widetilde G_{N,k+1}^{h,n}]
            +\int b_i^N\widetilde G_N^{h,n}\,\dd Z_{k+1:N},\\
 &F_{i,k}^{a}H_{N,k}^{h,n}
 +\frac{N-k}{N}\left(J_{i,k}^h[\widetilde G_{N,k+1}^{h,n}]
                         -E_f(t,x_i)U_{N,k}^{h,n}\right)\\
 &\qquad+\int\left(b_i^N(X_N)
                -\frac1N\sum_{j\ne i}\overline K^a(x_i-x_j)\right)
                \widetilde G_N^{h,n}\,\dd Z_{k+1:N}.
\end{aligned}
\end{equation}
At $k=N$, the terms with coefficient $(N-k)/N$ are absent.
By \eqref{eq:ft-internal-far-force}, the internal far-field flux is
bounded by $M_a|H_{N,k}^{h,n}|$. Positivity gives
\begin{equation*}
 \left|\int b_i^N\widetilde G_N^{h,n}\,\dd Z_{k+1:N}\right|
       \le B_0\widetilde G_{N,k}^{h,n}
   \end{equation*}
   and
       	\begin{equation*}
 \left|\int\left(b_i^N(X_N)
       -\frac1N\sum_{j\ne i}\overline K^a(x_i-x_j)\right)
       \widetilde G_N^{h,n}\,\dd Z_{k+1:N}\right|
       \le(B_0+\|\overline K^a\|_\infty)\widetilde G_{N,k}^{h,n}.
\end{equation*}
Equations \eqref{eq:ft-approximation-envelopes} and
$e_{N,k}^h\ge\widetilde e_{N,k}^h$ bound both
$\widetilde G_{N,k}^{h,n}$ and $H_{N,k}^{h,n}$ by
$C_T^Ne^{-\beta_T\widetilde e_{N,k}^h}\prod_iw_{M_2}(x_i)$.
Together with \Cref{lem:common-weighted-holder}, these bounds imply
that each flux in \eqref{eq:fixed-N-regularized-fluxes} belongs to
$L^q(0,T;L^q(\widetilde W_{N,k}^h))$ at fixed $N,a,h,n$.
They also imply \eqref{eq:fixed-N-localization-majorant}.

At fixed $N,a,h,n$, the inverse-flow estimates in
\cite[Section~A.4, (A.11)]{JiangWuShortTime} also apply to
$\widetilde G_N^{h,n}$ since $b_i^N\in C_b^\infty$.
For every integer $r\ge0$, the product rule and marginalization yield
\[
 \left\|\int b_i^N\widetilde G_N^{h,n}\,\dd Z_{k+1:N}
             \right\|_{W^{r,1}}
 \le C_{r,N,k}\|b_i^N\|_{W^{r,\infty}}
                    \|\widetilde G_N^{h,n}\|_{W^{r,1}}.
\]
Together with the interaction-flux estimate in
\cite[Section~A.4, (A.12)]{JiangWuShortTime}, this proves local
smoothness of both fluxes in \eqref{eq:fixed-N-regularized-fluxes}
after replacing $E_f$ by $E_f^\delta$.

Apply Section~\ref{subsec:common-cutoff-testing} to these two fluxes
at fixed $N,a,h,n$. Equations
\eqref{eq:fixed-N-field-weighted-convergence}--\eqref{eq:fixed-N-field-flux-convergence}
allow us to remove the smoothing of $E_f$. The additional flux
converges in the same weighted space since
\[
 \|F_{i,k}^{a}(H^\delta-H)
                             \|_{L_t^qL^q(\widetilde W_{N,k}^h)}
 \le M_a\|H^\delta-H\|_{L_t^qL^q(\widetilde W_{N,k}^h)}
 \longrightarrow0.
\]
The integrated energy inequality therefore holds on every
$[s,t]\subset[0,T]$.

In the scalar inequalities \eqref{eq:ft-smooth-near-energy} and
\eqref{eq:ft-regularized-error-hierarchy}, derived for the regularized
equations at fixed $N,a,h,n$, the coefficient functions are independent
of $h,n$. The additional source satisfies
\begin{equation}\label{eq:fixed-N-error-source-convergence}
 \lim_{h\downarrow0}\lim_{n\to\infty}
 \|\xi_{n,h}+h^{q(d-s-1)}\|_{L^1(0,T)}=0,
 \qquad d-s-1>0.
\end{equation}
Uniform convergence of the weighted integrals and
\Cref{rem:weighted-endpoints} therefore allow us to pass successively
to the limits $n\to\infty$ and $h\downarrow0$ in these inequalities
on every $[s,t]\subset[0,T]$.

Finally, \eqref{eq:ft-core-regularization-bounds}, the pointwise
bounds, and \Cref{prop:common-singular-hierarchy} justify passage to
the limit in the internal and external interaction terms. Equation
\eqref{eq:bounded-configuration-flux-convergence} gives convergence
of the flux associated with the bounded configuration field. The
initial traces follow from \eqref{eq:ft-approximation-L1}.
\end{proof}

\section*{Statements and Declarations}
\textbf{Funding.} This work was supported in part by the National Natural
Science Foundation of China under Grant No.~12371180.\par
\textbf{Competing interests.} The authors declare that they have no competing
financial or non-financial interests.\par
\noindent\textbf{Data availability.} No datasets were generated or analyzed in the
course of this study.\par

\end{document}